\documentclass[a4paper,10pt]{amsart}
\usepackage[english]{babel}
\usepackage{amsmath}
\usepackage{amssymb}
\usepackage{tikz}
\usepackage{mathrsfs}
\usepackage{enumerate}
\usepackage{hyperref}


\oddsidemargin = 0.0mm
\evensidemargin = 0.0mm
\topmargin = 10mm
\textheight = 238mm
\textwidth = 170mm
\voffset = -10mm
\hoffset = -5.5mm

\newtheorem{thm}{Theorem}[section]
\newtheorem{Lemma}[thm]{Lemma}
\newtheorem{Proposition}[thm]{Proposition}
\newtheorem{Corollary}[thm]{Corollary}

\newtheorem*{thm*}{Theorem}

\theoremstyle{definition}

\newtheorem{Definition}[thm]{Definition}
\newtheorem{Remark}[thm]{Remark}

\definecolor{wwwwww}{rgb}{0.4,0.4,0.4}

\renewcommand{\P}{\mathbb{P}}
\newcommand{\F}{\mathbb{F}}

\newcommand{\G}{\mathbb{G}}

\newcommand{\p}{\mathbb{P}}
\newcommand{\C}{\mathbb{C}}

\DeclareMathOperator{\expdim}{expdim}

\DeclareMathOperator{\Sec}{Sec}

\DeclareMathOperator{\expd}{expdim}

\hypersetup{pdfpagemode=UseNone}
\hypersetup{pdfstartview=FitH}
		
\setcounter{tocdepth}{1}		
				
\begin{document}
\title{On secant dimensions and identifiability of flag varieties}

\author[Ageu Barbosa Freire]{Ageu Barbosa Freire}
\address{\sc Ageu Barbosa Freire\\
Instituto de Matem\'atica e Estat\'istica, Universidade Federal Fluminense, Campus Gragoat\'a, Rua Alexandre Moura 8 - S\~ao Domingos\\
24210-200 Niter\'oi, Rio de Janeiro\\ Brazil}
\email{ageufreire@id.uff.br}

\author[Alex Casarotti]{Alex Casarotti}
\address{\sc Alex Casarotti\\ Dipartimento di Matematica e Informatica, Universit\`a di Ferrara, Via Machiavelli 30, 44121 Ferrara, Italy}
\email{csrlxa@unife.it}

\author[Alex Massarenti]{Alex Massarenti}
\address{\sc Alex Massarenti\\ Dipartimento di Matematica e Informatica, Universit\`a di Ferrara, Via Machiavelli 30, 44121 Ferrara, Italy}
\email{alex.massarenti@unife.it}

\date{\today}
\subjclass[2010]{Primary 14N05, 14N15, 14M15; Secondary 14E05, 15A69, 15A75}
\keywords{Flag varieties, secant varieties, identifiability}

\begin{abstract}
We investigate the secant dimensions and the identifiablity of flag varieties parametrizing flag of sub vector spaces of a fixed vector space. We give numerical conditions ensuring that secant varieties of flag varieties have the expected dimension, and that a general point on these secant varieties is identifiable.
\end{abstract}

\maketitle
\tableofcontents

\section{Introduction}
In the most general contest, a flag variety is a projective variety homogeneous under a complex linear algebraic group. Flag varieties play a central role in algebraic geometry, combinatorics, and representation theory \cite{Br05, BL18}.

Fix a vector space $V\cong\C^{n+1}$, over an algebraically closed field $K$ of characteristic zero, and integers $k_1\leq\ldots\leq k_r$. Let $\mathbb{G}(k_i,n)\subset\mathbb{P}^{N_i}$, where $N_i={{n+1}\choose{k_i+1}}-1$, be the Grassmannians of $k_i$-dimensional linear subspace of $\mathbb{P}(V)$ in its Pl\"ucker embedding. We have an embedding of the product of these Grassmannians 
$$\mathbb{G}(k_1,n)\times\dots\times\mathbb{G}(k_r,n)\subset\mathbb{P}^{N_1}\times\dots\times\mathbb{P}^{N_r}\subset\mathbb{P}^N$$
where $N={{n+1}\choose{k_1+1}}\cdots{{n+1}\choose{k_r+1}}-1$.

The flag variety $\F(k_1,\ldots,k_r;n)$ is the set of flags, that is nested subspaces,
$V_{k_1}\subset\cdots\subset V_{k_r}\subsetneq V$. This is a subvariety of the product of Grassmannian $\prod_{i=1}^r\G(k_i,n)$. Hence, via a product of Pl\"ucker embeddings followed by a Segre embedding we can embed $\F(k_1,\ldots,k_r;n)$ 
$$\F(k_1,\ldots,k_r;n)\hookrightarrow \mathbb{P}^{N_1}\times\dots\times\mathbb{P}^{N_r}\hookrightarrow\mathbb{P}^N$$

Consider natural numbers $a_1,\ldots,a_{n}$ such that $a_{k_1+1}=\cdots=a_{k_r+1}=1$ and $a_i=0$ for all $i\notin\{k_1+1,\ldots,k_r+1\}$. Then, $\F(k_1,\ldots,k_r;n)$ generates the subspace 
$$\mathbb{P}(\Gamma_{a_1,\ldots,a_{n}})\subseteq \mathbb{P}\left(\bigwedge^{k_1+1}V\otimes\cdots\otimes\bigwedge^{k_r+1}V\right)\subseteq \mathbb{P}^N$$
where $\Gamma_{a_1,\ldots,a_{n}}$ is the irreducible representation of $\mathfrak{sl}_{n+1} \C$ with highest weight $(a_1+\cdots+a_{n})L_1+\cdots+a_{n}L_{n}$, and $L_1+\dots + L_k$ is the highest weight of the irreducible representation $\bigwedge^{k}V$. We will denote $\Gamma_{a_1,\ldots,a_{n}}$ simply by $\Gamma_a$. By the Weyl character formula we have that 
$$\dim \mathbb{P}(\Gamma_{a}) = \prod_{1\leq i < j\leq n+1}\frac{(a_i+\dots+a_{j-1})+j-i}{j-i}-1$$
Furthermore, $\dim \F(k_1,\dots,k_r;n) = (k_1+1)(n-k_1)+\sum_{j=2}^i(n-k_j)(k_j-k_{j-1})$ and $\F(k_1,\ldots,k_r;n)=\mathbb{P}(\Gamma_a)\cap \prod_{i=1}^r\G(k_i,n)\subset \P^N$.

The geometry of these varieties has been investigated mostly from the point of view of Schubert calculus \cite{Br05} and dual defectivity \cite{Te05}. Secant varieties of small dimensional flag varieties have been studied in \cite{BD10} by taking advantage of the tropical approach to secant dimensions introduced by J. Draisma in \cite{Dr08}.

The \textit{$h$-secant variety} $\mathbb{S}ec_{h}(X)$ of a non-degenerate $n$-dimensional variety $X\subset\mathbb{P}^N$ is the Zariski closure of the union of all linear spaces spanned by collections of $h$ points of $X$. The \textit{expected dimension} of $\mathbb{S}ec_{h}(X)$ is $\expdim(\mathbb{S}ec_{h}(X)):= \min\{nh+h-1,N\}$. In general, the actual dimension of $\mathbb{S}ec_{h}(X)$ may be smaller than the expected one. In this case, following \cite[Section 2]{CC10} we say that $X$ is \textit{$h$-defective} and the number $\delta_h(X) = \expdim(\mathbb{S}ec_{h}(X)) - \dim(\mathbb{S}ec_{h}(X))$ is called the $h$-secant defect of $X$. 

We investigate secant defectivity of flag varieties following the machinery introduced in \cite{MR19}, which we now outline. Given general points $x_1,\dots,x_h\in X\subset\mathbb{P}^N$, consider the linear projection $\tau_{X,h}:X\subseteq\mathbb{P}^N\dasharrow\mathbb{P}^{N_h}$, with center $\left\langle T_{x_1}X,\dots,T_{x_h}X\right\rangle$, where $N_h:=N-1-\dim (\left\langle T_{x_1}X,\dots,T_{x_h}X\right\rangle)$. \cite[Proposition 3.5]{CC02} yields that if $\tau_{X,h}$ is generically finite then $X$ is not $(h+1)$-defective. Given $p_1,\dots, p_l\in X$ general points, we consider the linear projection $\Pi_{T^{k_1,\dots,k_l}_{p_1,\dots,p_l}}:X\subset\mathbb{P}^N\dasharrow\mathbb{P}^{N_{k_1,\dots,k_l}}$ with center the span $\left\langle T_{p_1}^{k_1}X,\dots, T_{p_l}^{k_l}X\right\rangle$ of higher order osculating spaces. We can degenerate, under suitable conditions, the linear span of several tangent spaces $T_{x_i}X$ into a subspace contained in a single osculating space $T_p^{k}X$. 
So the tangential projection $\tau_{X,h}$ degenerates to a linear projection with center contained in $\left\langle T_{p_1}^{k_1}X,\dots, T_{p_l}^{k_l}X\right\rangle$. If $\Pi_{T^{k_1,\dots,k_l}_{p_1,\dots,p_l}}$ is generically finite, then $\tau_{X,h}$ is generically finite as well, and we conclude that $X$ is not $(h+1)$-defective. In this paper we apply this strategy to flag varieties. We would like to stress that this approach, as the one introduced in \cite{Dr08}, depends heavily on an explicit parametrization of $X$. This method was successfully applied to other classes of homogeneous varieties such as Grassmannians \cite{MR19}, Segre-Veronese varieties \cite{AMR17}, Lagrangian Grassmannians and Spinor varieties \cite{FMR18}. However, its application to flag varieties involves much more difficult computations compared with the case of the Grassmannians, this is particularly reflected in Section \ref{sec3} where we introduce submersions of flag varieties into product of Grassmannians in order to study the relation among their higher osculating spaces.   

Furthermore, our results on secant defectivity, combined with a recent result in \cite{CM19}, allow us to produce a bound for identifiability of flag varieties. Recall that, given a non-degenerated variety $X\subset\mathbb{P}^N$, we say that a point $p\in\mathbb{P}^N$ is $h$-identifiable if it lies on a unique $(h-1)$-plane in $\mathbb{P}^N$ that is $h$-secant to $X$. Especially when $\mathbb{P}^N$ can be interpreted as a tensor space, identifiablity and tensor decomposition algorithms are central in applications for instance in biology, Blind Signal Separation, data compression algorithms, analysis of mixture models psycho-metrics, chemometrics, signal processing, numerical linear algebra, computer vision, numerical analysis, neuroscience and graph analysis \cite{DD13a}, \cite{DD13b}, \cite{DD15}, \cite{KAD11}, \cite{SB00}, \cite{BK09}, \cite{CGLM08}, \cite{LO15}, \cite{MR13}. Our main results in Theorem \ref{main} and Corollary \ref{CorId} can be summarized in the following statement.
\begin{thm}
Consider a flag variety $\F(k_1,\ldots,k_r;n)$. Assume that $n\geq 2k_{j}+1$ for some index $j$ and let $l$ be the maximum among these j's. Then, for 
$$h\leq\left(\frac{n+1}{k_l+1}\right)^{\lfloor \log_2(\sum_{j=1}^l k_j+l-1)\rfloor}$$
$\F(k_1,\ldots,k_r;n)$ is not $(h+1)$-defective. Furthermore, under the same bound, the general point of the $h$-secant variety of $\F(k_1,\ldots,k_r;n)$ is $h$-identifiable. 
\end{thm}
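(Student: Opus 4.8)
The plan is to follow the degeneration-of-tangential-projections strategy sketched in the introduction, reducing the non-$(h+1)$-defectivity of $X=\F(k_1,\ldots,k_r;n)$ to the generic finiteness of a projection from a span of higher osculating spaces. By \cite[Proposition 3.5]{CC02} it suffices to prove that the $h$-th tangential projection $\tau_{X,h}$ is generically finite. I would realize $\tau_{X,h}$ as a degeneration of the osculating projection $\Pi_{T^{m_1,\ldots,m_l}_{p_1,\ldots,p_l}}$: specializing the $h$ general points $x_1,\ldots,x_h$ so that they collide, in a controlled way along general curves, onto $l$ general points $p_1,\ldots,p_l$, the center $\langle T_{x_1}X,\ldots,T_{x_h}X\rangle$ degenerates into a subspace of $\langle T^{m_1}_{p_1}X,\ldots,T^{m_l}_{p_l}X\rangle$ for suitable osculating orders $m_i$. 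Hence if the osculating projection is generically finite, so is $\tau_{X,h}$, and $X$ is not $(h+1)$-defective.

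The combinatorial heart of the argument is the bookkeeping that controls how many tangent spaces can be absorbed into a single osculating space of a given order. Here I expect a binary/doubling degeneration: the span of two osculating spaces of order $d$ at two points colliding along a general curve limits into an osculating space of order roughly $2d+1$ at the coincidence point, so that iterating the collision $m$ times absorbs a number of tangent spaces exponential in the number of doublings. This is the source of both the exponent $\lfloor\log_2(\sum_{j=1}^l k_j+l-1)\rfloor$ — the largest osculating order one can reach before the osculating spaces of $X$ stop growing and the projection necessarily drops rank, the quantity $\sum_{j=1}^l k_j+l-1$ being $\sum_{j=1}^l(k_j+1)-1$ — and of the base $\frac{n+1}{k_l+1}$, the maximal number of pairwise skew $k_l$-dimensional linear subspaces of $\mathbb{P}^n$, i.e. the number of mutually independent tangent directions available at each step. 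The hypothesis $n\geq 2k_l+1$, with $l$ maximal, is precisely what guarantees $\frac{n+1}{k_l+1}\geq 2$ and leaves room for these transverse configurations.

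The main obstacle, and the step requiring the machinery of Section \ref{sec3}, is to prove that the osculating projection $\Pi_{T^{m_1,\ldots,m_l}_{p_1,\ldots,p_l}}$ is indeed generically finite for the flag variety. Unlike the Grassmannian case treated in \cite{MR19}, the nested-subspace incidence relations couple the osculating behaviour at the different factors $\G(k_i,n)$, so one cannot simply take products of the Grassmannian computations. I would use the submersions $\F(k_1,\ldots,k_r;n)\dashrightarrow\G(k_i,n)$ of Section \ref{sec3} to compare the higher osculating spaces of $X$ with those of the individual Grassmannians, reduce the rank computation to the largest controllable factor $\G(k_l,n)$, and show that the differential of $\Pi$ at a general point is injective — equivalently, that the osculating spaces $T^{m_i}_{p_i}X$ meet the generic tangent space in the expected way and do not fill the ambient space. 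Bounding their dimensions via the Weyl dimension formula recorded above, and checking that the center of projection has the expected codimension throughout the stated range of $h$, would complete the generic finiteness and hence the non-defectivity statement.

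Finally, for identifiability I would invoke \cite{CM19}: once $X$ is known to be non-$(h+1)$-defective for $h$ in the given range and $X$ avoids the finite list of genuinely exceptional defective cases, the criterion of \cite{CM19} upgrades subgeneric non-defectivity to $h$-identifiability of the general point of $\mathbb{S}ec_h(X)$. I would verify the (mild) numerical hypotheses of that result in our range — in particular that $h$ lies strictly below the generic rank, so that the general fibre of the abstract secant map is a single reduced point — to conclude that the general point of the $h$-secant variety of $\F(k_1,\ldots,k_r;n)$ is $h$-identifiable.
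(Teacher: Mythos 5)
Your outline reproduces the paper's strategy in the regime $n\geq 2k_r+1$: realize $\F(k_1,\ldots,k_r;n)$ as an osculating well-behaved linear section of $\prod_{i=1}^r\G(k_i,n)$ (Proposition \ref{well-behaivior}), transfer strong $2$-osculating and $\alpha$-osculating regularity via \cite[Proposition 4.4]{FMR18}, prove birationality of the relevant osculating projections (Proposition \ref{Osc_Proj_Flag}), and feed all of this into \cite[Theorem 5.3]{MR19}; your identification of the doubling mechanism behind the exponent and of $\alpha$ as the base is accurate. But the theorem is stated with only $n\geq 2k_l+1$ for the \emph{maximal} such index $l$, which may well be smaller than $r$, and here your proposal has a genuine gap. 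The entire osculating machinery for $X=\F(k_1,\ldots,k_r;n)$ is governed by the largest step $k_r$, not by $k_l$: the coordinate-point configurations (\ref{Set_Indexes}) and the rational normal curves realizing osculating regularity require $\alpha=\left\lfloor\frac{n+1}{k_r+1}\right\rfloor\geq 2$, that is, pairwise independent $(k_r+1)$-dimensional subspaces of $\C^{n+1}$, which simply do not exist when $n<2k_r+1$. Consequently your plan to ``reduce the rank computation to the largest controllable factor $\G(k_l,n)$'' cannot by itself yield generic finiteness of the tangential projections of $X$: the tangent directions along the forgotten subspaces $V_{k_{l+1}}\subset\cdots\subset V_{k_r}$ are invisible to anything factoring through $\G(k_l,n)$, and nothing in your argument rules out the span $\left\langle T_{x_1}X,\ldots,T_{x_h}X\right\rangle$ being deficient precisely in those directions.

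The paper closes this gap by a step absent from your sketch (Remark \ref{reduction}): the forgetful fibration $\pi:\F(k_1,\ldots,k_r;n)\to\F(k_1,\ldots,k_l;n)$, whose general fiber is a smaller flag variety. At a general point the tangent space of the total space splits as the span of the tangent space of the base and that of the fiber, meeting only in the expected way, and a Terracini-lemma computation \cite{Te12} shows that a deficiency of $\left\langle T_{\pi^{-1}(p_1)}X,\ldots,T_{\pi^{-1}(p_h)}X\right\rangle$ would, after subtracting the fiber contributions (which are contracted by $\pi$), force the corresponding span downstairs to violate the non-defectivity of $\F(k_1,\ldots,k_l;n)$ already established in Theorem \ref{Bound_Flags}. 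It is this transfer, not a comparison of osculating spaces, that converts a bound in terms of $k_l$ into a statement about the full flag; you would need to add it (or an equivalent) for your proof to cover $l<r$. A secondary inaccuracy: the identifiability upgrade \cite[Theorem 3]{CM19} is applied in Corollary \ref{CorId} under the explicit numerical hypothesis $2\dim\F(k_1,\ldots,k_r;n)-1\leq\left(\frac{n+1}{k_l+1}\right)^{\lfloor\log_2(\sum_{j=1}^l k_j+l-1)\rfloor}$, not under the condition that $h$ lie below the generic rank, so your final verification step should be replaced by that inequality.
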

The paper is organized as follows: in Section \ref{sec1} we study higher order osculating spaces of products of Grassmannians and the linear projections from them, in Section \ref{sec2} we apply the method introduced in \cite{MR19} to products of Grassmannians, in Section \ref{sec3} we get bounds for non-secant defectivity and identifiablity of flag varieties, and in Section \ref{sec4} we investigate the variety of secant lines of spacial flag varieties of type $\mathbb{F}(0,k;n)$.

\subsection*{Acknowledgments}
The first named author would like to thank FAPERJ and Massimiliano Mella (PRIN $2015$, Geometry of Algebraic Varieties, 2015EYPTSB-005) for the financial support, and the University of Ferrara for the hospitality during the period in which the majority of this work was completed. 
The third named author is a member of the Gruppo Nazionale per le Strutture Algebriche, Geometriche e le loro Applicazioni of the Istituto Nazionale di Alta Matematica F. Severi (GNSAGA-INDAM).

\section{Higher osculating behavior of products of Grassmannians}\label{sec1}
Consider the product $\mathbb{G}(k_1,n)\times\dots\times\mathbb{G}(k_r,n)\subset\mathbb{P}^{N_1}\times\dots\times\mathbb{P}^{N_r}\subset\mathbb{P}^N$, and given a non-negative integer $k$ define
$$\Lambda_k=\{I\subset\{0,\ldots,n\}\:|\:|I|=k+1\}$$
For any $I=\{i_0,\ldots,i_k\}\in\Lambda_k$ let $e_{I}\in\mathbb{G}(k,n)$ be the point corresponding to $e_{i_0}\wedge\cdots\wedge e_{i_k}\in\bigwedge^{k+1}\C^{n+1}$. We will denote by $Z_I$ the Pl\"ucker coordinates on $\mathbb{P}(\bigwedge^{k+1}\C^{n+1})$. 

From \cite{MR19} we have a notion of distance in $\Lambda_k$ given by
\stepcounter{thm}
\begin{equation}\label{Distance}
d(I,J)=|I|-|I\cap J|
\end{equation}
for all $I,J\in\Lambda_k$. More generally, we define 
$$\Lambda=\Lambda_{k_1}\times\dots\times\Lambda_{k_r}$$
Given $I=\{I^1,\ldots,I^r\}\in \Lambda$ let $e_I\in \prod_{i=1}^r\mathbb{G}(k_i,n)$ be the point corresponding to
$e_{I^1}\otimes\dots\otimes e_{I^r}\in\mathbb{P}^N$, and by $Z_I$ the corresponding homogeneous coordinate of $\mathbb{P}^N$. Furthermore, for all $I,J\in\Lambda$ with $I=\{I^1,\ldots,I^r\}$ and $J=\{J^1,\ldots,J^r\}$, we define their distance as
$$d(I,J)=\sum_{i=1}^rd(I^i,J^i)$$
where $d(I^i,J^i)$ is the distance defined in (\ref{Distance}).

From now on we will assume that $n\geq 2k_r+1$. Under this assumption $\Lambda$ has diameter $r+\sum_{i=1}^r k_i$ with respect to this distance.

In the following, we give an explicit description of the osculating spaces of $\prod_{i=1}^r\mathbb{G}(k_i,n)$ at coordinate points. 

\begin{Proposition}\label{Osc_Product}
For each $s\geq 0$ 
$$
T_{e_I}^s\left(\prod_{i=1}^r\mathbb{G}(k_i,n)\right) = \langle e_J\: ; \:d(I,J)\leq s\rangle = \{ Z_J=0\: ; \:d(I,J)> s\}\subset \mathbb{P}^N
$$
In particular, $T_{e_I}^s\left(\prod_{i=1}^r\G(k_i,n)\right)= \mathbb{P}^N$ for $s\geq r+\sum_{i=1}^rk_i$.
\end{Proposition}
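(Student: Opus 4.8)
The plan is to work in explicit local coordinates and exploit the multiplicative structure of the Segre--Pl\"ucker embedding. Fix $I=\{I^1,\dots,I^r\}$ and, for each factor, choose the standard affine chart of $\G(k_i,n)$ around $e_{I^i}$: a $(k_i+1)$-plane near the one spanned by $\{e_j\}_{j\in I^i}$ is the row span of a matrix $(\mathrm{Id}\mid A^i)$, so $\G(k_i,n)$ is parametrized near $e_{I^i}$ by a map $\phi_i$ whose $J^i$-th coordinate $Z_{J^i}(A^i)$ is the maximal Pl\"ucker minor determined by $J^i$. The first step is the key numerical observation: the columns of $J^i$ lying outside $I^i$ number exactly $|J^i|-|I^i\cap J^i|=d(I^i,J^i)$, so $Z_{J^i}$ is, up to sign, a $d(I^i,J^i)\times d(I^i,J^i)$ minor of the free block $A^i$, hence homogeneous of degree $d(I^i,J^i)$ and vanishing to that exact order at the origin. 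Composing the $\phi_i$ with the Segre map gives a local parametrization $\phi=\phi_1\otimes\cdots\otimes\phi_r$ of $\prod_i\G(k_i,n)$ around $e_I$, and since $Z_J=\prod_{i=1}^r Z_{J^i}$ the coordinate $Z_J$ vanishes at the origin to order $\sum_{i=1}^r d(I^i,J^i)=d(I,J)$.

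With this in place, the inclusion $T_{e_I}^s\subseteq\Span{e_J\: ;\: d(I,J)\le s}$ is immediate: by definition $T_{e_I}^s$ is spanned by the partial derivatives $\partial^\alpha\phi(0)$ with $|\alpha|\le s$, and the $Z_J$-component of such a derivative is $\partial^\alpha Z_J(0)$, which vanishes whenever $|\alpha|<d(I,J)$. Thus every generator $\partial^\alpha\phi(0)$ has vanishing $J$-coordinate for all $J$ with $d(I,J)>s$, proving the containment.

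For the reverse inclusion I would reduce to the single Grassmannian case. Because each $\phi_i$ depends only on the variables $A^i$, the Leibniz rule gives the clean factorization $\partial^\alpha\phi(0)=\partial^{\alpha^1}\phi_1(0)\otimes\cdots\otimes\partial^{\alpha^r}\phi_r(0)$ for $\alpha=(\alpha^1,\dots,\alpha^r)$. By the Grassmannian case of the statement (the formula $T_{e_{I^i}}^{s_i}\G(k_i,n)=\Span{e_{J^i}\: ;\: d(I^i,J^i)\le s_i}$ from \cite{MR19}), each coordinate point $e_{J^i}$ lies in $T_{e_{I^i}}^{d(I^i,J^i)}\G(k_i,n)$, hence is a linear combination of derivatives $\partial^{\beta^i}\phi_i(0)$ with $|\beta^i|\le d(I^i,J^i)$. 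Tensoring these expressions and expanding multilinearly writes $e_J=e_{J^1}\otimes\cdots\otimes e_{J^r}$ as a linear combination of vectors $\partial^{\beta}\phi(0)$ with $|\beta|=\sum_i|\beta^i|\le\sum_i d(I^i,J^i)=d(I,J)$. Whenever $d(I,J)\le s$ this exhibits $e_J$ as an element of $T_{e_I}^s$, giving $\Span{e_J\: ;\: d(I,J)\le s}\subseteq T_{e_I}^s$ and hence the first equality. The second equality is the tautology that the span of a set of coordinate points is cut out by the vanishing of the complementary coordinates; and since $n\ge 2k_r+1$ forces the diameter of $\Lambda$ to equal $r+\sum_i k_i$, for $s\ge r+\sum_i k_i$ every $J$ satisfies $d(I,J)\le s$ and the osculating space fills $\P^N$.

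The essential difficulty is concentrated in the reverse inclusion, and more precisely in the single-factor spanning statement: showing that the derivatives of $\phi_i$ up to order $s_i$ really span all of $\Span{e_{J^i}\: ;\: d(I^i,J^i)\le s_i}$, and not merely sit inside it, is the nontrivial linear-algebra input, since the leading term of $Z_{J^i}$ is a $d(I^i,J^i)\times d(I^i,J^i)$ minor of $A^i$ rather than a single monomial. I would either quote this directly from \cite{MR19} or reprove it by checking that, for each $J^i$, a suitable iterated derivative isolates the corresponding minor so that the resulting system is triangular with respect to the distance $d(I^i,\cdot)$. The product case then follows formally from the multilinear bookkeeping above, the only remaining care being the verification that $\phi=\phi_1\otimes\cdots\otimes\phi_r$ is a genuine local parametrization, so that its osculating space at the origin agrees with $T_{e_I}^s$.
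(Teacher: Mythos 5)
Your proof is correct, and while it starts from the same chart and the same polynomial parametrization as the paper (the one whose $Z_J$-coordinate is $\prod_{i=1}^r\det(M_{J^i})$), it handles the key step by a genuinely different mechanism. The paper proves both inclusions at once through a direct computation of all mixed partials of the product parametrization: differentiating $\det(M_J)$ with respect to a set of variables indexed by pairs $(K,K')$ and evaluating at the origin gives $\pm1$ exactly when $J^i=K_i'\cup(I^i\setminus K_i)$ for every $i$, so each order-$m$ partial of $\varphi$ at $0$ is, up to sign, the single coordinate point $e_{K'\cup(I\setminus K)}$ at distance $m$ from $I$, and every $e_J$ with $d(I,J)=m$ arises this way. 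You instead split the argument: the forward inclusion follows from your (correct) observation that $Z_J\circ\phi$ is homogeneous of degree exactly $d(I,J)$, and for the reverse inclusion you factor the mixed partials through the Segre map, $\partial^{\alpha}\phi(0)=\partial^{\alpha^1}\phi_1(0)\otimes\cdots\otimes\partial^{\alpha^r}\phi_r(0)$, and quote the single-Grassmannian formula $T^{s_i}_{e_{I^i}}\G(k_i,n)=\langle e_{J^i}\,:\,d(I^i,J^i)\le s_i\rangle$ from \cite{MR19} — a legitimate citation, since that is precisely where the formula (and the ``isolating the minor'' computation you defer to) is established. Your route is more modular and in fact more general: it shows that a coordinate-point description of osculating spaces graded by a distance function is stable under Segre products, with distances adding, so the product case costs only multilinear bookkeeping. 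What the paper's self-contained computation buys in exchange is the sharper fact that the partials at $e_I$ are themselves signed coordinate points, not merely that they span the right subspace. One cosmetic remark: for the ``in particular'' claim you do not need $n\ge 2k_r+1$ or attainment of the diameter of $\Lambda$; the unconditional bound $d(I,J)\le\sum_{i=1}^r(k_i+1)=r+\sum_{i=1}^r k_i$ for all $J$ is all that is required.
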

\begin{proof}
Set $I=\{I^1,\dots,I^r\}\in \Lambda$. We may assume that $I^i=\{0,\dots,k_i\}$ for each $1\leq i\leq r$ and consider the following parametrization of $\prod_{i=1}^r\mathbb{G}(k_i,n)$ in a neighborhood of $e_I$:
\stepcounter{thm}
\begin{equation}\label{Param_Product}
\begin{array}{ccccl}
\varphi&:&\prod_{i=1}^r\C^{(k_i+1)(n-k_i)}&\longrightarrow&\mathbb{P}^N\\
&&\left[I_{k_i+1}\:\:\:(x_{l,m}^i)\right]_{i=1,\dots,r}&\longmapsto&\left(\prod_{i=1}^r\det(M_{J^i})\right)_{J=\{J^1,\ldots,J^r\}\in \Lambda}
\end{array}
\end{equation}
where $M_{J^i}$ is the submatrix obtained from $\left[I_{k_i+1},\:(x_{l,m}^i)_{\substack{0\leq l\leq k_i\\k_i+1\leq m\leq n}}\right]$ by considering the columns indexed by $J^i$.

For each $J\in \Lambda$, we will denote $\prod_{i=1}^r\det(M_{J^l})$ simply by $\det(M_J)$. Note that each variable appears in degree at most one in the coordinates of $\varphi$. Therefore, deriving two times with respect to the same variable always gives zero. Furthermore, as $\det(M_{J})$ has degree at most $r+\sum_{i=1}^r k_i$ all partial derivatives of order greater or equal than $r+\sum_{i=1}^r k_i$ are zero. Thus, it is enough to prove the claim for $s \leq r+\sum_{i=1}^r k_i$.

Given $J=\{J^1,\ldots,J^r\}\in \Lambda$, let $i,k,k'$ be integers such that $1\leq i\leq r$, $k\in\{0\ldots,k_i\}$ and $k'\in\{k_i+1,\ldots,n\}$. Then
$$
\dfrac{\partial \det(M_{J})}{\partial  x_{k,k'}^i}=\left\{\begin{array}{cl}
 \pm \dots\det(M_{J^{i-1}})\det(M_{J^i,k,k'})\det(M_{J^{i+1}})\dots& k'\in J^i\\
 0&k'\notin J^i
\end{array}\right.
$$
where $M_{J^i,k,k'}$ is the submatrix obtained from $M_{J^i}$ by deleting the column indexed by $k'$ and the row indexed by $k$.

More generally, let $m_1,\ldots,m_r$ be non-negative integers such that their sum is bigger than one. For each $i=1,\ldots,r$ consider 
$$K_i=\{k_1^i,\ldots,k_{m_i}^i\}\subset \{0,\ldots,k_i\}\:\text{ and }\:K_i'=\{k_1'^i,\ldots,k_{m_i}'^i\}\subset\{k_i+1,\ldots,n\}$$
with $|K_i|=|K'_i|=m_i$. Now, set $m=m_1+\cdots+m_r$ and
$$K=\{K_1,\ldots,K_r\}, \:K'=\{K_1',\ldots,K_r'\}$$
Therefore, denoting $\partial x_{k^1_1,k_1'^1}^{1}\cdots \partial x_{k^r_{m_r},k'^r_{m_r}}^{r}$ simply by $\partial^m K,K'$ we have 
$$
\dfrac{\partial ^m \det(M_J)}{\partial^m K,K'}=\left\{\begin{array}{cl}
\pm \prod_{i=1}^r\det(M_{J^{i},K_i,K_i'})&\text{ if }K'\subset J \: \:\text{ and }m\leq d(I,J)=\deg (\det (M_J))\\
0&\text{ otherwise }
\end{array}\right.$$
for any $J \in \Lambda$, where $K'\subset J$ means that $\{k_1'^1,\ldots,k_{m_1}'^1\}\subset J^{1},\ldots,\{k_1'^r,\ldots,k_{m_r}'^r\}\subset J^{r}$, and $M_{J^{i},K_i,K_i'}$ is the submatrix obtained from $M_{J^{i}}$ deleting the columns indexed by $K_i'$ and the rows indexed by $K_i$. Thus,
$$
\dfrac{\partial ^m \det(M_J)}{\partial^m K,K'}(0)=\left\{\begin{array}{cl}
\pm1&\text{ if }J^{i}=K_i'\cup (\{I^{i}\setminus K_i\}) \text{ for each }i=1,\ldots,r\\
0&\text{ otherwise }
\end{array}\right.$$
Finally, let us denote by $J=K'\cup \{I\setminus K\}$  the element in $\Lambda$ for which $J^{i}=K_i'\cup (\{I^{i}\setminus K_i\})$ for each $i=1,\ldots,r$. Then, we have that
$$
\dfrac{\partial ^m \varphi}{\partial^m K,K'}(0)=
\pm e_{K'\cup (\{I\setminus K\})}$$
Note that $d(I,K'\cup \{I\setminus K\})=m$, and any $J\in \Lambda$ with $d(I,J)=m$ may be written as $K'\cup \{I\setminus K\}$.
Thus, we get that 
$$\left\langle\begin{array}{c}
\dfrac{\partial^{m} \varphi}{\partial^{m} K,K'}(0)\:|\:m\leq s\\
\end{array}
\right\rangle=\langle e_J\:|\:d(I,J)\leq s\rangle
$$
which proves the claim.
\end{proof}

Now, it is immediate to compute the dimension of the osculating spaces of $\prod_{i=1}^r\mathbb{G}(k_i,n)$.

\begin{Corollary}\label{Dim_Osc_Product}
For any point $p\in \prod_{i=1}^r\mathbb{G}(k_i,n)$ we have
$$
\begin{array}{ccl}
\dim T_{p}^s(\prod_{i=1}^r\mathbb{G}(k_i,n))&=&\displaystyle\sum_{\substack{i=1,\ldots,r\\0\leq s_i\leq k_i+1,\\s_1+\cdots+s_r\leq s}}{{n-k_1}\choose{s_1}}{{k_1+1}\choose{s_1}}\cdots{{n-k_r}\choose{s_r}}{{k_r+1}\choose{s_r}}
\end{array}$$
for any $0\leq s< r+\sum_{i=1}^r k_i$ while $T_{p}^s\left(\prod_{i=1}^r\mathbb{G}(k_i,n)\right)=\mathbb{P}^N$ for any $s\geq r+\sum_{i=1}^r k_i$.
\end{Corollary}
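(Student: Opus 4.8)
The plan is to reduce to a coordinate point by homogeneity, invoke the explicit description from Proposition \ref{Osc_Product}, and then carry out a direct combinatorial count. The key structural observation is that $\prod_{i=1}^r\mathbb{G}(k_i,n)$ is homogeneous under the \emph{product} group $\prod_{i=1}^r\mathrm{GL}(V)$, where the $i$-th factor acts on the $i$-th Grassmannian; this group acts transitively on the product and acts linearly on $\mathbb{P}^N=\mathbb{P}\left(\bigotimes_{i=1}^r\bigwedge^{k_i+1}V\right)$ through the tensor product of the Pl\"ucker representations, preserving the Segre--Pl\"ucker embedding. Since a projective linear automorphism $g$ with $g(X)=X$ satisfies $g\big(T^s_pX\big)=T^s_{g(p)}X$, the function $p\mapsto\dim T^s_p$ is constant on $X$. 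It therefore suffices to compute $\dim T^s_{e_I}$ at the coordinate point $e_I$ for a fixed $I=\{I^1,\ldots,I^r\}\in\Lambda$.

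By Proposition \ref{Osc_Product} we have $T^s_{e_I}=\langle e_J\,;\,d(I,J)\leq s\rangle$, a span of distinct coordinate points of $\mathbb{P}^N$, which are linearly independent. Hence $T^s_{e_I}$ is a projective subspace whose dimension is one less than $\#\{J\in\Lambda\,;\,d(I,J)\leq s\}$, and computing $\dim T^s_{e_I}$ reduces to the purely enumerative problem of counting the indices $J$ within distance $s$ of $I$.

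To carry out the count I would stratify according to the distance contributed by each factor. Using $d(I,J)=\sum_{i=1}^r d(I^i,J^i)$, I would sum over all tuples $(s_1,\ldots,s_r)$ with $s_1+\cdots+s_r\leq s$, counting in the $i$-th factor the number of $J^i\in\Lambda_{k_i}$ with $d(I^i,J^i)=s_i$. Since $|I^i|=k_i+1$, such a $J^i$ is produced by deleting $s_i$ of the $k_i+1$ elements of $I^i$ and adjoining $s_i$ elements of the complement $\{0,\ldots,n\}\setminus I^i$, whose cardinality is $n-k_i$; this gives $\binom{k_i+1}{s_i}\binom{n-k_i}{s_i}$ choices. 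Taking the product over $i$ and summing over the admissible tuples yields exactly the asserted expression for the number of spanning points. The standing assumption $n\geq 2k_r+1$ forces $n-k_i\geq k_i+1$ for every $i$, so the binomial $\binom{n-k_i}{s_i}$ imposes no constraint beyond $0\leq s_i\leq k_i+1$, which justifies the range of summation; as a consistency check, at $s=r+\sum_i k_i$ the per-factor sums collapse by Vandermonde's identity to $\binom{n+1}{k_i+1}$, recovering $|\Lambda|=N+1$ and thus the equality $T^s_p=\mathbb{P}^N$ (handled directly by the final line of Proposition \ref{Osc_Product}).

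This corollary presents essentially no obstacle: it is an immediate consequence of Proposition \ref{Osc_Product} once one notes the linear independence of the coordinate points. The only two points deserving care are the choice of the product group $\prod_i\mathrm{GL}(V)$---rather than the diagonal one, which is not transitive on the product---in order to conclude that $\dim T^s_p$ is independent of $p$, and the correct determination of the summation range, for which the hypothesis $n\geq 2k_r+1$ is precisely what is required.
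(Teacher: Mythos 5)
Your argument is essentially the paper's own proof: the paper likewise reduces to a coordinate point by homogeneity and then reads the count off from Proposition \ref{Osc_Product}, its entire proof being one line to that effect. Your write-up is in fact more careful on two points the paper glosses over: you replace the paper's claim that $GL(n+1)$ acts transitively on the product (false for the diagonal action) by the genuinely transitive product group $\prod_{i=1}^r GL(V)$ acting through the tensor of the Pl\"ucker representations, and your explicit enumeration correctly identifies the displayed sum as the \emph{number} of independent spanning coordinate points $e_J$, so the projective dimension is that sum minus one --- an off-by-one (visible already for $r=1$, $k_1=0$, $s=0$, where the sum is $1$ but $\dim T_p^0=0$) that the corollary's statement silently elides.
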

\begin{proof}
Since the general linear group $GL(n+1)$ acts transitively on $\prod_{i=1}^r\mathbb{G}(k_i,n)$ the statement follows from Proposition \ref{Osc_Product}.
\end{proof}

\subsection{Osculating Projections}
For a general point $p\in\prod_{i=1}^r\G(k_i,n)$, we will denote $T_p^s\left(\prod_{i=1}^r\G(k_i,n)\right)$ simply by $T_p^s$. Now, take $0\leq s\leq r+\sum_{i=1}^rk_i$ and $I\in \Lambda$. By Proposition \ref{Osc_Product} the linear projection of $\prod_{i=1}^r\G(k_i,n)$ from $T_{e_I}^s$ is given by
$$
\begin{array}{cccl}
\Pi_{T_{e_{I}}^s}:&\prod_{i=1}^r\G(k_i)&\dashrightarrow&\p^{N'_s}\\
&(Z_J)_{J\in\Lambda}&\longmapsto&(Z_J)_{J\in\Lambda\:|\:d(I,J)>s}
\end{array}.
$$
Moreover, given $I'\subset\{0,\dots,n\}$ with $|I'|=m$ we have the linear projection 
$$
\begin{array}{ccccl}
\pi_{I'}&:&\p^n&\dashrightarrow&\p^{n-m}\\
&&(x_i)&\longmapsto&(x_i)_{i\in\{0,\ldots,n\}\setminus I'}
\end{array}
$$
which in turns induces the linear projection 
$$
\begin{array}{ccccl}
\Pi_{I'}&:&\G(k,n)&\dashrightarrow&\mathbb{G}(k,n-m)\\
&&V&\longmapsto&\langle\pi_{I'}(V)\rangle\\
&&(Z_J)_{J\in\Lambda_k}&\longmapsto&(Z_J)_{J\in\Lambda_k\:|\:J\cap I'=\emptyset}
\end{array}
$$
whenever $k<n-m$.

Finally, let us fix $I=\{I^1,\dots,I^r\}\in \Lambda$ and take $m_1,\dots,m_r$ integers such that $m_i\leq k_i+1$ for each $i=1,\dots,r$. Then, given $I'^1\subset I^1,\dots,I'^r\subset I^r$, with $|I'^i|=m_i$, we have a projection
$$
\begin{array}{ccccl}
\prod_{i=1}^r\Pi_{I'^i}&:&\prod_{i=1}^r\G(k_i,n)&\dashrightarrow&\prod_{i=1}^r\G(k_i,n-m_i)\\
&&V_1\times\dots\times V_r&\longmapsto&\Pi_{I'^1}(V_1)\times\dots\times\Pi_{I'^r}(V_r)
\end{array}.
$$
Note that a general fiber of $\prod_{i=1}^r\Pi_{I'^i}$ is isomorphic to $\prod_{i=1}^r\G(k_i,k_i+m_i)$. Indeed, let $x=\prod_{i=1}^r\Pi_{I'^i}\left((V_i)_{i=1}^r\right)\in\prod_{i=1}^r\G(k_i,n-m_l)$ be a general point. Then, we have
\begin{center}
$\overline{\left(\prod_{i=1}^r\Pi_{I'^i}\right)^{-1}(x)}=\left\{(W_i)_{i=1}^r\in\prod_{i=1}^r\G(k_i,n)\:|\:W_i\subset\langle V_i,e_{j_1^i},\ldots,e_{j_{m_i}^i}\rangle,\:i=1,\ldots,r\right\}$.
\end{center}

\begin{Lemma}\label{Lemma1}
Let us fix $I=\{I^1,\ldots,I^r\}\in\Lambda$. If $0\leq s\leq r-2+\sum_{i=1}^r k_i$ and $I'^i\subset I^i$ with $|I'^i|=m_i$ for each $i=1,\ldots,r$, then the rational map $\Pi_{T_{e_{I}}^s}$ factors through $\prod_{i=1}^r\Pi_{I'^i}$ whenever $\sum_{i=1}^r m_i= s+1
$.
\end{Lemma}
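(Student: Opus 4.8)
The plan is to use that, by Proposition \ref{Osc_Product} and the explicit formulas displayed just before the statement, both $\Pi_{T_{e_I}^s}$ and $\prod_{i=1}^r\Pi_{I'^i}$ are linear projections onto coordinate subspaces of $\mathbb{P}^N$: the former retains exactly the coordinates $Z_J$ with $d(I,J)>s$, and the latter retains exactly the coordinates $Z_J$ with $J^i\cap I'^i=\emptyset$ for every $i=1,\dots,r$. For projections of this monomial type a factorization between them is equivalent to an inclusion of the two index sets inside $\Lambda$, equivalently to a containment of the two linear centers. Hence the first thing I would do is reduce the whole statement to the purely combinatorial comparison of $\{J\in\Lambda : J^i\cap I'^i=\emptyset\ \forall i\}$ with $\{J\in\Lambda : d(I,J)>s\}$.

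The key step is then a short distance estimate. Fixing $J=\{J^1,\dots,J^r\}\in\Lambda$ with $J^i\cap I'^i=\emptyset$ for all $i$, I would note that $I'^i\subseteq I^i$ forces $I'^i\subseteq I^i\setminus J^i$, so that $d(I^i,J^i)=|I^i|-|I^i\cap J^i|=|I^i\setminus J^i|\geq|I'^i|=m_i$. Summing over $i$ and using the additivity of the distance on $\Lambda$ gives
\[
d(I,J)=\sum_{i=1}^r d(I^i,J^i)\geq\sum_{i=1}^r m_i=s+1>s.
\]
Thus every coordinate kept by $\prod_{i=1}^r\Pi_{I'^i}$ is among those kept by $\Pi_{T_{e_I}^s}$; equivalently the center $T_{e_I}^s=\langle e_J : d(I,J)\leq s\rangle$ is contained in the center $\langle e_J : J^i\cap I'^i\neq\emptyset\text{ for some }i\rangle$ of $\prod_{i=1}^r\Pi_{I'^i}$. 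This containment is precisely what yields the factorization between the two coordinate projections asserted in the statement, and, being a containment of linear centers, it already holds on all of $\mathbb{P}^N$, so no Pl\"ucker or Segre relation on $\prod_{i=1}^r\G(k_i,n)$ is needed.

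Finally I would check that under $0\leq s\leq r-2+\sum_i k_i$, together with $\sum_i m_i=s+1$ and $m_i\leq k_i+1$, all the maps involved are genuinely defined: the bound lets one distribute $s+1$ among the $m_i$ with $m_i\leq k_i+1$, the hypothesis $n\geq 2k_r+1$ guarantees $k_i\leq n-m_i$ so that each target $\G(k_i,n-m_i)$ is non-empty, and $s<r+\sum_i k_i$ ensures $T_{e_I}^s\neq\mathbb{P}^N$, so that $\Pi_{T_{e_I}^s}$ is a non-trivial projection. The inequality itself is elementary, so the main obstacle I anticipate is not analytic but organizational: correctly matching the additive structure $d(I,J)=\sum_i d(I^i,J^i)$ with the factorwise vanishing condition $J^i\cap I'^i=\emptyset$ coming from the Segre product, and keeping track that this is a statement about rational maps whose comparison is cleanest phrased through the inclusion of centers rather than on the open locus where each projection is separately defined.
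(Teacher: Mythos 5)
Your proof is correct and takes essentially the same route as the paper: the paper's own two-line argument likewise reduces the factorization to the containment of the center $T_{e_I}^s=\langle e_J \;|\; d(I,J)\le s\rangle$ in the center of $\prod_{i=1}^r\Pi_{I'^i}$, established by exactly your estimate that $J^i\cap I'^i=\emptyset$ for all $i$ forces $d(I,J)\ge\sum_{i=1}^r m_i=s+1>s$, together with well-definedness of $\Pi_{T_{e_I}^s}$ from $s$ being strictly less than the diameter $r+\sum_{i=1}^r k_i$ of $\Lambda$. Your extra details (the factorwise computation $d(I^i,J^i)=|I^i\setminus J^i|\ge m_i$ and the check that each target $\G(k_i,n-m_i)$ makes sense under $n\ge 2k_r+1$) merely make explicit what the paper leaves implicit.
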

\begin{proof}
Since the diameter of $\Lambda$ is $r+\sum k_i$ we have $\{J\in \Lambda\:|\:d(I,J)\leq s\}\subsetneq \Lambda$ and then $\Pi_{T_{e_{I}}^s}$ is well-defined.

On the other hand, if $J=\{J^1,\ldots,J^r\}\in\Lambda$ is such that $J^i\cap I'^i=\emptyset$ for all $i=1,\ldots,r$, then $d(I,J)\geq\sum_{i=1}^r m_i >s$ which yields that the center of  $\Pi_{T_{e_{I}}^s}$ is contained in the center of $\prod_{i=1}^r\Pi_{I'^i}$.
\end{proof}

\begin{Proposition}\label{Prop1}
The rational map $\Pi_{T_{e_I}^s}$ is birational for all $0\leq s\leq r-2+\sum_{i=1}^r k_i$.
\end{Proposition}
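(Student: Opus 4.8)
The plan is to prove, by induction, the slightly stronger statement obtained by dropping the hypothesis $n\geq 2k_r+1$: for an arbitrary product $\prod_{i=1}^r\G(k_i,n_i)$ the osculating projection $\Pi_{T_{e_I}^s}$ is birational onto its image for every $0\leq s\leq D-2$, where $D=\sum_{i=1}^r\min(k_i+1,n_i-k_i)$ is the diameter of $\Lambda$. Proposition \ref{Prop1} is then the case $n_i=n\geq 2k_r+1$, for which $D=r+\sum_{i=1}^rk_i$. I would induct on $D$. Since we are in characteristic zero, a dominant generically injective rational map onto its image has degree one, so it suffices to show that $\Pi_{T_{e_I}^s}$ is generically injective.

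First I would reduce to the fibers of $\prod_{i=1}^r\Pi_{I'^i}$. Fix $s$ in the allowed range and choose integers $m_1,\dots,m_r$ with $0\leq m_i\leq\min(k_i+1,n_i-k_i)$ and $\sum m_i=s+1$; such a choice exists because $s+1\leq D-1<D$. Pick $I'^i\subset I^i$ with $|I'^i|=m_i$ and form $\prod_{i=1}^r\Pi_{I'^i}$. By Lemma \ref{Lemma1} (more precisely, since the center of $\Pi_{T_{e_I}^s}$ is contained in that of $\prod_{i=1}^r\Pi_{I'^i}$), any two points with the same image under $\Pi_{T_{e_I}^s}$ lie in a common fiber of $\prod_{i=1}^r\Pi_{I'^i}$. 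As recorded in the excerpt, a general such fiber is $F\cong\prod_{i=1}^r\G(k_i,k_i+m_i)$, whose index set has diameter exactly $\sum m_i=s+1$. Hence it is enough to prove that $\Pi_{T_{e_I}^s}$ is generically injective on a general fiber $F$.

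The heart of the argument is to recognize $\Pi_{T_{e_I}^s}|_F$ as an osculating projection of $F$ of strictly smaller diameter, so that induction applies. I would single out the coordinate point $p_0\in F$ nearest to $e_I$, namely the one whose $i$-th factor contains $\langle e_j : j\in I'^i\rangle$, and verify the affine relation $d(I,J)=d_F(p_0,J)+(D-s-1)$ on $F$, where $d_F$ is the distance on the index set of $F$. Then the surviving coordinates $\{Z_J : d(I,J)>s\}$ restrict on $F$ to $\{Z_J : d_F(p_0,J)>s'\}$ with $s':=2s+1-D$. If $s'<0$ the center is empty and $\Pi_{T_{e_I}^s}|_F$ is the embedding of $F$, which is injective. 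If $0\leq s'$, then $s\leq D-2$ gives $s'\leq s-1=\operatorname{diam}(F)-2$, so $\Pi_{T_{e_I}^s}|_F=\Pi_{T_{p_0}^{s'}}$ is birational by the inductive hypothesis, the diameter of $F$ being $s+1<D$. In either case $\Pi_{T_{e_I}^s}|_F$ is generically injective, closing the induction; the base cases ($D\leq 2$, such as projection from a point or from a single tangent space) are immediate, and for a single Grassmannian they are covered by \cite{MR19}.

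The main obstacle is precisely the identification in the previous paragraph. One must check that restricting the ambient Plücker coordinates to a general fiber $F$ and passing through the isomorphism $F\cong\prod_{i=1}^r\G(k_i,k_i+m_i)$ reproduces \emph{exactly} the osculating-projection coordinates of $F$, with no collapsing, so that generic injectivity genuinely transfers back. This is where the explicit parametrization (\ref{Param_Product}) together with a basis of $\langle V_i,e_{I'^i}\rangle$ adapted to the fibration has to be used, and it is also the point at which one must allow, and re-establish, Proposition \ref{Osc_Product} and the diameter count for product factors violating $n\geq 2k_r+1$. The additivity of $d$ over the factors and the product form of $\varphi$ should keep the bookkeeping manageable, but establishing the affine comparison between $d$ and $d_F$ and ruling out any collapsing of coordinates on $F$ is the crux of the computation.
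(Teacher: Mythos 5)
Your reduction to a common fiber of $\prod_{i=1}^r\Pi_{I'^i}$ is sound (it is exactly Lemma \ref{Lemma1}), and your numerology is internally consistent, but the central identification in step three --- that $\Pi_{T_{e_I}^s}$ restricted to a general fiber $F$ is the osculating projection $\Pi_{T_{p_0}^{s'}}$ of $F$ with $s'=2s+1-D$ --- is not merely unverified: it is false. Test a single Grassmannian, $\G(2,5)$, $s=1$, $I=\{0,1,2\}$, $I'=\{0,1\}$, so that $F\cong\G(2,4)$ is the family of planes inside $U=\langle V,e_0,e_1\rangle$, with adapted basis $u_0=e_0$, $u_1=e_1$ and $u_2,u_3,u_4$ a general basis of $V$. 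By Cauchy--Binet the ambient Pl\"ucker coordinate $Z_J$ restricts on $F$ to $\sum_K \det(M_{K,J})\,Z^F_K$, and genericity of $V$ gives $\det(M_{K,J})\neq 0$ exactly when $K\cap I'\subset J$. Hence an adapted coordinate $Z^F_K$ survives in the restricted system $\{Z_J : d(I,J)>s\}$ if and only if there exists $J\supset K\cap I'$ with $|J\cap I|\leq k-s$, i.e.\ if and only if $|K\cap I'|\leq k-s$; the killed coordinates are $\{Z^F_K : |K\cap I'|\geq k-s+1\}$. In the test case these are the three coordinates with $K\supset\{0,1\}$, while your value $s'=2s+1-D=0$ would make $\Pi|_F$ a projection from a single point, dropping one coordinate. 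In general the killed set is a ``ball'' around the sub-Grassmannian $\{W : \langle e_j\,|\,j\in I'\rangle\subset W\}$, cut out by $|K\cap I'|$ with $|I'|=s+1<k+1$, and so is never of the form $\{K : d_F(p_0,K)\leq s'\}$ for a point $p_0$ of $F$ (a ball around a point is cut by $|K\cap p_0|$ with $|p_0|=k+1$). Since this killed set is nonempty precisely in the upper part of the range --- in particular at the top value $s=D-2$ whenever $k\geq 2$, which is the only case one needs after the nesting reduction --- the claimed affine relation $d(I,J)=d_F(p_0,J)+(D-s-1)$ fails, the inductive hypothesis does not apply to $\Pi|_F$, and the induction does not close.

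This also clarifies why the paper's argument has a different shape: instead of analyzing the internal geometry of one fiber, it lets $I'^m$ range over \emph{all} $k_m$-element subsets of $I^m$ and over all factors $m$, obtaining via Lemma \ref{Lemma1} a whole family of product projections through which $\Pi_{T_{e_I}^s}$ factors; a point $(W_i)$ of the osculating fiber then lies in every fiber $X_{I'^m}$, forcing $W_m\subset\bigcap_{I'^m}\langle e_j\,|\,j\in I'^m;V_m\rangle=V_m$ directly, with no induction and no comparison of restricted linear systems. To rescue your scheme you would have to prove generic injectivity of the projection of $F$ from the span of the star of $\langle e_j\,|\,j\in I'\rangle$ (and its product analogue), which is a genuinely different, cone-centered projection and not an instance of the statement being proved.
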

\begin{proof}
Since $T_{e_{I}}^s$ contains $T_{e_{I}}^{s-1}$ it is enough to prove the statement for $s=r-2+\sum_{i=1}^r k_i$. Let us fix $m\in\{1,\ldots,r\}$. By Lemma \ref{Lemma1}, for each subset $I'^m\subset I^m$ with $|I'^m|=k_m$ there is a rational map $\pi_{I'^m}$ that makes the following diagram commutative. 
\[
\begin{tikzpicture}[xscale=3.9,yscale=-1.8]
    \node (A0_0) at (0, 0) {$\prod_{i=1}^r\G(k_i,n)$};
    \node (A0_1) at (1, 0) {$\P^{N^{'}_{s}}$};
    \node (A1_1) at (1, 1) {$\left(\prod_{i\neq m}\G(k_i,n-k_i-1)\right)\times\G(k_m,n-k_m)$};
    \path (A0_0) edge [->,dashed]node [auto] {$\scriptstyle{\Pi_{T_{e_{I}}^s}}$} (A0_1);
    \path (A0_1) edge [->,dashed]node [auto] {$\scriptstyle{\pi_{I^{'m}}}$} (A1_1);
    \path (A0_0) edge [->,dashed,swap]node [auto] {$\scriptstyle{\left(\prod_{i\neq m}\Pi_{I^i}\right)\times \Pi_{I^{'m}}}$} (A1_1);
\end{tikzpicture}
\]
Let $x=\Pi_{T_{e_{I}}^s}(\{V_i\}_{i=1}^r)$ be a general point and $X\subset \prod_{i=1}^r\G(k_i,n)$ be the fiber of $\Pi_{T_{e_{I}}^s}$ over $x$. Set $x_{I'^m}=\pi_{I'^m}(x)$, and denote by $X_{I'^m}\subset \prod_{i=1}^r\G(k_i,n)$ the fiber of $\left(\prod_{i\neq m}\Pi_{I^i}\right)\times \Pi_{I'^m}$ over $x_{I'^m}$. Thus,
$$X\subset \displaystyle\bigcap_{I'^m} X_{I'^m}$$
where this intersection runs over all $I'^m\subset I^m$ with $|I'^m|=k_m$ and $m=1,\ldots,r$. Now, if $(W_i)_{i=1}^r$ is a general point in $ X$ then 
$$W_m\subset \langle e_{j_1},\ldots,e_{j_{k_m}},V_m\rangle\:\text{ for any }\:I'^m=\{e_{j_1},\ldots,e_{j_{k_m}}\}\subset I^m$$
Therefore,
$$W_m\subset \displaystyle\bigcap_{I'^m}\langle e_{j_1},\dots,e_{j_{k_m}},V_m\rangle=V_m$$
This implies $W_m=V_m$ for every $m=1,\ldots,r$. Since we are working in characteristic zero, we conclude that $\Pi_{T_{e_{I}}^s}$ is birational.
\end{proof}
The next step is to study linear projections from the span of several osculating spaces. In particular, we want to understand when such a projection is birational. First of all, note that the order of osculating spaces can not exceed $r-2+\sum_{i=1}^r k_i$. Furthermore, in order to carry out the computations, we need to consider just the coordinates points of $\prod_{i=1}^r\G(k_i,n)$ such that the corresponding linear subspaces are linearly independent in $\C^{n+1}$, then we can use at most
$$\alpha:=\left\lfloor\dfrac{n+1}{k_r+1}\right\rfloor$$
of them. Now, let us consider the points $e_{I_1},\dots,e_{I_{\alpha}}\in\prod_{i=1}^r\G(k_i,n)$, where
\stepcounter{thm}
\begin{equation}\label{Set_Indexes}
\begin{array}{l}
I_1=\{I_1^1=\{0,\ldots,k_1\},\ldots,I_{1}^r=\{0,\ldots,k_r\}\}\\
I_2=\{I_2^1=\{k_r+1,\ldots,k_r+k_1+1\},\ldots,I_{2}^r=\{k_r+1,\ldots,k_r+k_r+1\}\}\\
\vdots\\
I_{\alpha}=\{\ldots,I_ {\alpha}^i=\{(k_r+1)(\alpha-1),\ldots,(k_r+1)(\alpha-1)+k_i\},\ldots\}
\end{array}
\end{equation}
Let $s_1,\dots,s_{\alpha}$ be integers such that $0\leq s_m\leq r-2+\sum_{i=1}^r k_i$. Denote the linear subspace $\langle T_{e_{I_1}}^{s_1},\dots,T_{e_{I_m}}^{s_m}\rangle$ simply by $T_{e_{I_1},\ldots,e_{I_m}}^{s_1,\ldots,s_m}$. Then, for $m\leq \alpha$ we have the linear projection
$$
\begin{array}{ccccl}
\Pi_{T_{e_{I_1},\ldots,e_{I_m}}^{s_1,\ldots,s_m}}&:&\prod_{i=1}^r\G(k_i,n)&\dashrightarrow&\p^{N'_{s_1,\ldots,s_m}}\\
&&(Z_J)_{J\in\Lambda}&\longmapsto&(Z_J)_{J\in\Lambda\: | \:d(J,I_1)>s_1,\ldots,d(J,I_m)>s_m}
\end{array}
$$
Now, consider $I_1,\ldots,I_{\alpha}$ as in (\ref{Set_Indexes}), and $I_m'^i\subset I_m^i$ with $|I_m'^i|=s_m^i$ for each $1\leq m\leq \alpha$ and $i=1,\ldots,r$, where $s_m^i$ are non-negative integers. If $I'^i$ denotes the union $\bigcup_{m=1}^{\alpha} I_m'^i$, then for each $i=1,\ldots,r$ we have a linear projection of $\p^n$ 
$$
\begin{array}{ccccl}
\pi_{I'^i}&:&\p^n&\dashrightarrow&\p^{n-\sum_{m=1}^{\alpha} s_m^i}\\
&&(x_i)_{0\leq i\leq n}&\longmapsto&(x_i)_{0\leq i\leq n\text{ and }i\notin I'^i}
\end{array}
$$
which in turns induces the following projection
$$
\begin{array}{ccccl}
\Pi_{I'^i}&:&\G(k_i,n)&\dashrightarrow&\mathbb{G}(k_i,n-\sum_{m=1}^{\alpha} s_m^i)\\
&&V&\longmapsto&\langle\pi_{I'^i}(V)\rangle\\
&&(Z_J)_{J\in\Lambda_{k_i}}&\longmapsto&(Z_J)_{J\in\Lambda_{k_i}\:|\:J\cap I'^i=\emptyset}
\end{array}
$$
whenever $n-\sum_{m=1}^{\alpha} s_m^i\geq k_i$. Finally, if $n-\sum_{m=1}^{\alpha} s_m^i\geq k_i$ for each $i=1,\ldots,r$, then the projections above induce a projection
$$
\begin{array}{ccccl}
\prod_{i=1}^r\Pi_{I'^i}&:&\prod_{i=1}^r\G(k_i,n)&\dashrightarrow&\prod_{i=1}^r\mathbb{G}(k_i,n-\sum_{m=1}^{\alpha} s_m^i)\\
&&(V_1,\dots,V_r)&\longmapsto&(\Pi_{{ I'^1}}(V_1),\dots,\Pi_{{I'^r}}(V_r))
\end{array}
$$
\begin{Lemma}\label{Lemma2}
Let $I_1,\ldots,I_{\alpha}$ be as in (\ref{Set_Indexes}), $m,s_1,\ldots,s_m$ integers such that $1<m\leq \alpha$ and $0\leq s_i \leq r-2+\sum_{i=1}^r k_i$. Now, consider $I_1'^i\subset I_1^i,\ldots,I_m'^i\subset I_m^i$ with $|I_j'^i|=s_j^i$, where $s_j^i$ is a non-negative integer for each $i=1,\ldots,r$ and $1\leq j\leq m$. For $j>m$ and $i=1,\ldots,r$ set $I_j'^i=\emptyset\subset I_j^i$. Denote by $I'^i$ the union $\bigcup_{j=1}^{\alpha}I_j'^i$ for each $i=1,\ldots,r$ and assume that 
\begin{itemize}
\item[(i)] $n-\sum_{j=1}^m s_j^i\geq k_i$ for each $i=1,\ldots,r$;
\item[(ii)] $\sum_{i=1}^rs_j^i\geq s_j+1$ for each $j=1,\ldots, m$.
\end{itemize} 
Then, the rational maps $\prod_{i=1}^r\Pi_{I'^i}$ and $\Pi_{T_{e_{I_1},\ldots,e_{I_m}}^{s_1,\ldots,s_m}}$ are well-defined and the former factors through the latter.
\end{Lemma}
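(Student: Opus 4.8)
The plan is to argue exactly as in the proof of Lemma~\ref{Lemma1}, reducing the statement to a containment of the index sets of the homogeneous coordinates retained by the two projections, but now tracking the $m$ coordinate points $e_{I_1},\dots,e_{I_m}$ at once. Abbreviate $\Pi_T:=\Pi_{T_{e_{I_1},\ldots,e_{I_m}}^{s_1,\ldots,s_m}}$, and set
\[
K_\Pi:=\{J\in\Lambda\: :\: J^i\cap I'^i=\emptyset \text{ for all } i\},\qquad K_T:=\{J\in\Lambda\: :\: d(J,I_t)>s_t \text{ for all } t=1,\dots,m\},
\]
the sets indexing the coordinates kept by $\prod_{i=1}^r\Pi_{I'^i}$ and by $\Pi_T$ respectively. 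A preliminary observation I would record first is that, by construction (\ref{Set_Indexes}), the blocks $I_1^i,\dots,I_m^i$ occupy pairwise disjoint ranges of $\{0,\dots,n\}$, so the union $I'^i=\bigcup_{j=1}^m I_j'^i$ is disjoint and $|I'^i|=\sum_{j=1}^m s_j^i$.

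For well-definedness, note that $\prod_{i=1}^r\Pi_{I'^i}$ maps to $\prod_{i=1}^r\G(k_i,n-\sum_{j=1}^m s_j^i)$, which is a genuine product of Grassmannians exactly when $n-\sum_{j=1}^m s_j^i\geq k_i$ for every $i$; this is hypothesis (i), and it is equivalent to the existence, for each $i$, of a $(k_i+1)$-subset $J^i\subseteq\{0,\dots,n\}\setminus I'^i$, i.e.\ to $K_\Pi\neq\emptyset$. The heart of the proof is the containment $K_\Pi\subseteq K_T$, which I would establish as follows. Fix $J\in K_\Pi$ and an index $t\in\{1,\dots,m\}$. Since $I_t'^i\subseteq I'^i$, the hypothesis $J^i\cap I'^i=\emptyset$ forces $J^i\cap I_t'^i=\emptyset$; as $I_t'^i\subseteq I_t^i$ has $s_t^i$ elements, the set $J^i\cap I_t^i$ is contained in $I_t^i\setminus I_t'^i$, whence $|J^i\cap I_t^i|\leq (k_i+1)-s_t^i$ and $d(J^i,I_t^i)=(k_i+1)-|J^i\cap I_t^i|\geq s_t^i$. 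Summing over $i$ and using hypothesis (ii),
\[
d(J,I_t)=\sum_{i=1}^r d(J^i,I_t^i)\geq \sum_{i=1}^r s_t^i\geq s_t+1>s_t.
\]
Since $t$ was arbitrary, $J\in K_T$. This proves $K_\Pi\subseteq K_T$; in particular $K_T\supseteq K_\Pi\neq\emptyset$, so the center of $\Pi_T$ is a proper coordinate subspace and $\Pi_T$ is well-defined as well.

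Finally, both maps are coordinate projections in the Pl\"ucker--Segre coordinates $(Z_J)_{J\in\Lambda}$: the map $\Pi_T$ records $(Z_J)_{J\in K_T}$, while $\prod_{i=1}^r\Pi_{I'^i}$ records $(Z_J)_{J\in K_\Pi}$, these latter being precisely the coordinates of $\prod_{i=1}^r\G(k_i,n-\sum_j s_j^i)$. Because $K_\Pi\subseteq K_T$, forgetting the coordinates indexed by $K_T\setminus K_\Pi$ yields a further linear projection $\pi$ with $\prod_{i=1}^r\Pi_{I'^i}=\pi\circ\Pi_T$, which is exactly the assertion that the former factors through the latter. I expect the only genuine friction to be the multi-index bookkeeping: one must use the disjointness of the blocks in (\ref{Set_Indexes}) to guarantee $|I'^i|=\sum_j s_j^i$, and must apply (ii) separately to each of the $m$ centers so that a single $J\in K_\Pi$ is simultaneously far from all of $I_1,\dots,I_m$. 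Everything else is the same reduction already used for a single point in Lemma~\ref{Lemma1}.
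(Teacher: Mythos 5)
Your proposal is correct and takes essentially the same approach as the paper: both arguments establish well-definedness by using hypothesis (i) to produce a $(k_i+1)$-subset $J^i\subset\{0,\dots,n\}\setminus I'^i$ and hypothesis (ii) to get $d(J,I_t)\geq\sum_{i=1}^r s_t^i\geq s_t+1$, and both reduce the factorization to the containment of the center of $\Pi_{T_{e_{I_1},\ldots,e_{I_m}}^{s_1,\ldots,s_m}}$ in the center of $\prod_{i=1}^r\Pi_{I'^i}$ (your $K_\Pi\subseteq K_T$). You merely make explicit some steps the paper leaves implicit, such as the estimate $|J^i\cap I_t^i|\leq(k_i+1)-s_t^i$ and the block-disjointness giving $|I'^i|=\sum_{j}s_j^i$.
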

\begin{proof}
Note that $\Pi_{T_{e_{I_1},\ldots,e_{I_m}}^{s_1,\ldots,s_m}}$ is well-defined if and only if $\{J\in\Lambda\: | \:d(J,I_1)>s_1,\ldots,d(J,I_m)>s_m\}\neq \emptyset$. From (i) we have that for each $1\leq i\leq r$ the set $\{0,\ldots,n\}\setminus I'^i$ has at least $k_i+1$ elements. Therefore, we have a set $J^i\subset\{0,\ldots,n\}\setminus I'^i$ of cardinality $k_i+1$ and taking $J=\{J^1,\ldots,J^r\}\in \Lambda$ we have 
$$d(I_j,J)=\displaystyle\sum_{i=1}^rd(I_j^i,J^i)\geq \sum_{i=1}^rs_j^i= s_j+1>s_j$$
for each $1\leq j\leq m$. Hence, $\Pi_{T_{e_{I_1},\ldots,e_{I_m}}^{s_1,\ldots,s_m}}$ is well-defined. Now, note that (i) yields that $\prod_{i=1}^r\Pi_{I'^i}$ is well-defined. Furthermore, if $J\in \Lambda$ and $J^i\cap  I'^i=\emptyset$ for all $i=1,\ldots,r$, then $d(J,I_1)>s_1,\ldots,d(J,I_m)>s_m$. Thus, the center of $\Pi_{T_{e_{I_1},\ldots,e_{I_m}}^{s_1,\ldots,s_m}}$ is contained in the center of $\prod_{i=1}^r\Pi_{I'^i}$.
\end{proof}
\begin{Proposition}\label{Prop1}
Let $I_1,\ldots,I_{\alpha-1}$ be as in (\ref{Set_Indexes}) and $s_1,\ldots,s_{\alpha-1}$ be integers such that $0\leq s_j\leq s=r-2+\sum_{i=1}^r k_i$. Then, the projection $\Pi_{T_{e_{I_1},\ldots,e_{I_{\alpha-1}}}^{\hspace{0,1 cm}s_1\hspace{0,1 cm},\ldots,\hspace{0,1 cm}s_{\alpha-1}}}$ is birational. 
\end{Proposition}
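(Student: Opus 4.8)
The plan is to show that the general fiber of $\Pi_{T^{s_1,\ldots,s_{\alpha-1}}_{e_{I_1},\ldots,e_{I_{\alpha-1}}}}$ consists of a single point; since we work in characteristic zero, generic injectivity of a dominant rational map onto its image yields birationality. As in the proof of the single osculating space case, the strategy is to recover, one factor at a time, the subspaces $V_1,\dots,V_r$ determining a general point $(V_i)_{i=1}^r$ of $\prod_{i=1}^r\G(k_i,n)$ from its image, by factoring the osculating projection through suitable products of Grassmannian projections $\prod_{i=1}^r\Pi_{I'^i}$ via Lemma \ref{Lemma2} and intersecting the resulting fibers.

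Fix a target index $m\in\{1,\dots,r\}$. For every point $j\in\{1,\dots,\alpha-1\}$ I would set $s_j^i=k_i+1$ for $i\neq m$ (so that $I_j'^i=I_j^i$ is a full deletion), and $s_j^m=\max\{0,\,s_j-(r-2)-\sum_{i\neq m}k_i\}$. A direct check shows $0\le s_j^m\le k_m$, the upper bound being equivalent to $s_j\le r-2+\sum_{i}k_i=s$, which is exactly the hypothesis; moreover $\sum_{i=1}^r s_j^i\ge s_j+1$, so condition (ii) of Lemma \ref{Lemma2} holds. Condition (i) requires $n-\sum_{j=1}^{\alpha-1}s_j^i\ge k_i$: for $i\neq m$ the left side is $n-(\alpha-1)(k_i+1)$, and this is $\ge k_i$ precisely when $\alpha(k_i+1)\le n+1$, which follows from $k_i\le k_r$ and $\alpha=\lfloor (n+1)/(k_r+1)\rfloor$; the case $i=m$ is easier since $s_j^m\le k_m$. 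This is exactly the point where using $\alpha-1$ rather than $\alpha$ coordinate points is essential.

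By Lemma \ref{Lemma2}, for each choice of deletion sets $I_j'^m\subset I_j^m$ with $|I_j'^m|=s_j^m$ the map $\prod_{i=1}^r\Pi_{I'^i}$, with $I'^i=\bigcup_j I_j'^i$, factors through our projection, so the fiber $X$ over the image of $(V_i)_i$ is contained in the corresponding fiber of $\prod_{i=1}^r\Pi_{I'^i}$. Using the description of the general fiber of a product projection analogous to the one recalled before Lemma \ref{Lemma1}, any $(W_i)_i\in X$ satisfies $W_m\subset\langle V_m,\,e_\ell:\ell\in\bigcup_j I_j'^m\rangle$. Now I would let the sets $I_j'^m$ vary over all size-$s_j^m$ subsets of $I_j^m$ and intersect these containments. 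Since $(V_i)_i$ is general, for coordinate subsets the spans satisfy $\langle V_m,A\rangle\cap\langle V_m,B\rangle=\langle V_m,A\cap B\rangle$, and because $s_j^m\le k_m<|I_j^m|$ the intersection of all size-$s_j^m$ subsets of $I_j^m$ is empty for every $j$; hence $W_m\subset\langle V_m\rangle=V_m$, and equality of dimensions forces $W_m=V_m$. Letting $m$ range over $1,\dots,r$ recovers $(W_i)_i=(V_i)_i$, so the general fiber is a single point.

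The main obstacle is the combinatorial bookkeeping in the second step: one must distribute the required weight $s_j+1$ across the $r$ factors so that the special factor $m$ keeps room to vary, i.e. $s_j^m\le k_m$, strictly below $|I_j^m|=k_m+1$, while the remaining factors absorb the rest by full deletion, and simultaneously verify condition (i) of Lemma \ref{Lemma2} uniformly in $j$. Both verifications hinge on the two sharp inequalities $s_j\le s$ and $\alpha(k_r+1)\le n+1$; once these are in place, the intersection argument proceeds exactly as in the single osculating space case.
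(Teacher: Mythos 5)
Your proof is correct and takes essentially the same route as the paper's: fix a factor $m$, apply Lemma \ref{Lemma2} with full deletions $I_j'^i=I_j^i$ for $i\neq m$ and partial deletions on the $m$-th factor, then recover $V_m$ (hence $W_m=V_m$ for every $m$) by intersecting the fibers of the induced projections $\prod_{i=1}^r\Pi_{I'^i}$ over all admissible choices of the deletion sets, using $\alpha(k_r+1)\leq n+1$ for condition (i). The only, harmless, difference is that you choose $|I_j'^m|=s_j^m$ minimally so that $\sum_i s_j^i=s_j+1$, whereas the paper fixes $|I_j'^m|=k_m$ uniformly, which satisfies condition (ii) for all $s_j\leq s$ simultaneously.
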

\begin{proof}
Fix $m\in\{1,\ldots,r\}$. For any $j=1,\ldots,\alpha-1$ consider $I_j'^{m}\subset I_j^{m}$ with $|I_j'^{m}|=k_{m}$ and $I_j'^i=I_j^i$ for $i\neq m$. Set $I'^i=\bigcup_{j=1}^{\alpha-1}I_j'^i$, then
$$n-(\alpha-1)(k_i+1)\geq n-(\alpha-1)(k_r+1)\geq n-\dfrac{(n-k_r)}{k_r+1}(k_r+1)\geq k_r\geq k_i$$
and
$$n-(\alpha-1)k_m\geq n-\dfrac{(n-k_r)}{k_r+1}k_r=\dfrac{nk_r+n-nk_r+k_r^2}{k_r+1}\geq\dfrac{2k_r+1+k_r^2}{k_r+1}\geq k_m+1$$
Thus, our set of subsets $I_j'^i$ satisfies (i) in Lemma \ref{Lemma2}. Furthermore, for each $j=1,\ldots,\alpha-1$
$$\displaystyle\sum_{i=1}^r |I_j'^l|=k_{m}+\displaystyle\sum_{i\neq m}(k_i+1)=r-1+\sum_{i=1}^{r}k_i=s+1$$
Therefore, by Lemma \ref{Lemma2} there exists a rational map $\pi_{I'^{m}}$ that makes the following diagram commutative
\[
  \begin{tikzpicture}[xscale=3.9,yscale=-1.8]
    \node (A0_0) at (0, 0) {$\prod_{i=1}^r\G(k_i,n)$};
    \node (A0_1) at (1, 0) {$\P^{N'_{s,\ldots,s}}$};
    \node (A1_1) at (1, 1) {$\prod_{i=1}^r\G(k_i,n-\sum_{j=1}^{\alpha-1} |I_j'^i|)$};
    \path (A0_0) edge [->,dashed]node [auto] {$\scriptstyle{\Pi_{T_{e_{I_1}},\ldots,e_{I_{\alpha-1}}}}$} (A0_1);
    \path (A0_1) edge [->,dashed]node [auto] {$\scriptstyle{\pi_{I'^m}}$} (A1_1);
    \path (A0_0) edge [->,dashed,swap]node [auto] {$\scriptstyle{\prod_{i=1}^r\Pi_{I'^i}}$} (A1_1);
  \end{tikzpicture}
\]
Now, let $x=\Pi_{T_{e_{I_1},\ldots,e_{I_{\alpha-1}}}^{\hspace{0,1 cm}s\hspace{0,1 cm},\ldots,\hspace{0,1 cm}s}}(\{V_i\}_{i=1}^r)$ be a general point in the image of $\Pi_{T_{e_{I_1},\ldots,e_{I_{\alpha-1}}}^{\hspace{0,1 cm}s\hspace{0,1 cm},\ldots,\hspace{0,1 cm}s}}$, and $X\subset \prod_{i=1}^r\G(k_i,n)$ be the fiber of $\Pi_{T_{e_{I_1},\ldots,e_{I_{\alpha-1}}}^{\hspace{0,1 cm}s\hspace{0,1 cm},\ldots,\hspace{0,1 cm}s}}$ over $x$. Set $x_{I'^m}=\pi_{I'^m}(x)$ and denote by $X_{I'^m}\subset \prod_{i=1}^r\G(k_i,n)$ the fiber of $\prod_{i=1}^r\Pi_{I'^i}$ over $x_{I'^m}$. Therefore, $X\subset \displaystyle\bigcap_{I'^m} X_{I'^m}$ where this intersection runs over all subsets $I'^m=\bigcup_{j=1}^{\alpha-1}I_j'^m$ with $I_j'^m\subset I_j^m$ and $|I_j'^m|=k_m$.
In particular, if $\{W_i\}_{i=1}^r\in X$ is a general point, then we must have $W_m\subset\langle e_i\:|\:i\in I'^m\:;\:V_m\rangle$ and hence $W_m\subset\displaystyle\bigcap_{I'^m}\langle e_i\:|\:i\in I'^m;V_m\rangle$. Now, since $|I_j'^m|=k_m$ we have $\displaystyle\bigcap_{I'^m}\langle e_i\:|\:i\in I'^m\rangle=\emptyset$ and then $V_m=\displaystyle\bigcap_{I'^m}\langle e_i\:|\:i\in I'^m\:;\:V_m\rangle$ which in turn yields $W_m=V_m$, for all $m=1,\dots,r$.
\end{proof}
Now, we want to understand what is the largest integer $s'$ for which $\Pi_{T_{e_{I_1},\ldots,e_{I_{\alpha-1}},e_{I_{\alpha}}}^{s,\ldots,s,s'}}$ is birational. 
\begin{Proposition}\label{Osc_Proj_Product}
Let $I_1,\ldots,I_{\alpha}$ be as in (\ref{Set_Indexes}) and $s=r-2+\sum_{i=1}^r k_i$. Consider $s_i'=\min\{k_i+1,n-\alpha(k_i+1)\}$ for $i\neq r$, $s_r'=\min\{k_r,n-\alpha k_r-1\}$, and set $s'=\sum_{i=1}^r s_i'-1\leq s$. Then, 
\begin{itemize}
\item[-] $\Pi_{T_{e_{I_1},\ldots,e_{I_{\alpha-1}},e_{I_{\alpha}}}^{s,\ldots,s,s'-1}}$ is birational whenever $\alpha(k_r+1)-1<n<k_r^2+3k_r+1$;
\item[-] $\Pi_{T_{e_{I_1},\ldots,e_{I_{\alpha}}}^{s,\ldots,s}}$ is birational whenever $n\geq k_r^2+3k_r+1$.
\end{itemize} 
\end{Proposition}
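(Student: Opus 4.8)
The plan is to run, now with all $\alpha$ of the coordinate points $e_{I_1},\dots,e_{I_\alpha}$, the same fibre computation that proved birationality for $\alpha-1$ points in the preceding Proposition; the one genuinely new feature is that the last point $e_{I_\alpha}$ consumes essentially all of the $n+1$ available coordinates, so the index budget becomes tight. I would first dispatch the elementary combinatorics behind the statement. Unwinding the definition of $s_i'$, every minimum is attained by its first argument exactly when $n$ is large, and the binding inequality is the one for $i=r$, namely $n-\alpha k_r-1\ge k_r$; substituting $\alpha=\lfloor(n+1)/(k_r+1)\rfloor$ and simplifying turns this into $n\ge k_r^2+3k_r+1$, in which case $\sum_{i=1}^r s_i'=r-1+\sum_{i=1}^r k_i$ and hence $s'=s$. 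Below the threshold the minimum for $i=r$ equals $n-\alpha k_r-1<k_r$ and $s'<s$, while the lower bound $\alpha(k_r+1)-1<n$ is exactly what guarantees $s_r'\ge 1$, i.e. that $e_{I_\alpha}$ removes at least one index and so genuinely enlarges the centre.

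For birationality I would, for each fixed coordinate index $m\in\{1,\dots,r\}$, choose subsets $I_j'^i\subset I_j^i$ and use Lemma \ref{Lemma2} to factor $\Pi_{T^{s,\dots,s,s''}_{e_{I_1},\dots,e_{I_\alpha}}}$ through a product $\prod_{i=1}^r\Pi_{I'^i}$ of Grassmannian projections, where $s''$ is $s$ or $s'-1$ according to the case. On a general fibre the factorization gives $W_m\subset\langle V_m,\,e_j:j\in I'^m\rangle$; letting the coordinate-$m$ subsets vary over all admissible choices and intersecting forces $W_m=V_m$, precisely as before, provided every index removed from coordinate $m$ can be avoided by some choice — that is, provided each block contributing to coordinate $m$ leaves at least one index out, which on the last block amounts to $s_\alpha^m\le k_m$. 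The affordable case is the largest coordinate $m=r$: removing $k_r$ from each top block together with full blocks from the coordinates $i\ne r$ meets condition (ii) of Lemma \ref{Lemma2} with equality, and the corresponding per-coordinate budgets of condition (i) are the mildest, so one obtains $W_r=V_r$.

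The hard part, and what I expect to be the main obstacle, is the smaller factors. A direct count shows that to pin any $m\ne r$ at order $s$ one is forced through condition (ii) to remove full blocks from every coordinate $i\ne m$, in particular from coordinate $r$, whose total removal $\alpha(k_r+1)$ overshoots the budget $n-k_r$ of condition (i) because the $\alpha$ disjoint top blocks already occupy almost all of $\{0,\dots,n\}$; thus the naive leave-one-out factorization simply cannot pin the smaller $W_i$ at full order $s$. This is exactly what the case split is there to repair. Below the threshold one lowers the order on $e_{I_\alpha}$ to $s'-1$: the single freed unit is what lets the pinned coordinate keep a spare index on the last block, $s_\alpha^m\le k_m$, while the other coordinates stay within $s_i'$ and hence within budget, restoring the factorization uniformly in $m$; this yields the first bullet. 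Above the threshold, where $s'=s$ and order $s$ is claimed, the factorization still produces $W_r=V_r$ but the remaining $W_i$ must be recovered by a finer, non-factorization argument — reading them off the Pl\"ucker coordinates that survive the projection — and making this recovery work, together with verifying the transportation-type feasibility of the removals uniformly over all pinned coordinates below the threshold, is the delicate and computation-heavy core of the proof, and the reason flag varieties demand more here than a single Grassmannian.
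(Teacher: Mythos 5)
Your setup (factor through $\prod_{i=1}^r\Pi_{I'^i}$ via Lemma \ref{Lemma2}, vary the removed subsets, intersect the fibres to pin each $W_m$) is the paper's strategy, and your combinatorial analysis of the thresholds ($s'=s$ exactly when $n\geq k_r^2+3k_r+1$, with the binding inequality $n-\alpha k_r-1\geq k_r$ at $i=r$) is correct. But your ``direct count'' contains a genuine error that derails both bullets. Condition (ii) of Lemma \ref{Lemma2} with $s_j=s=r-2+\sum_i k_i$ requires $\sum_i s_j^i\geq r-1+\sum_i k_i$ \emph{per block} $j$, while the maximum removal per block is $r+\sum_i k_i$; so each block has a slack of exactly one omitted index, and that omission may be placed in \emph{any} coordinate, not forcibly in coordinate $m$. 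You concluded that pinning $m\neq r$ at order $s$ forces full blocks $k_i+1$ in every coordinate $i\neq m$, hence a total removal $\alpha(k_r+1)$ in coordinate $r$ that blows the budget of condition (i). The paper's proof avoids precisely this: in the configuration indexed by a pair $(l,m)$, the single per-block omission is spent on coordinate $m$ only in block $l$ (giving $|I_l'^m|=k_m$, which is what varies in the fibre intersection), and on coordinate $r$ in every other block (so $|I_j'^r|=k_r$ for $j\neq l$, not $k_r+1$). Coordinate $r$'s total removal then drops to $\alpha k_r+1$, and condition (i) for it becomes $n\geq \alpha k_r+k_r+1$ --- exactly the threshold $n\geq k_r^2+3k_r+1$ of the second bullet. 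Since the fibre is intersected over all configurations $(l, I_l'^m, I_\alpha'^m)$, every index of every block of coordinate $m$ is omitted in \emph{some} configuration, so $\bigcap I'^m=\emptyset$ and $W_m=V_m$ for all $m$, with no further argument needed.

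Consequently the ``finer, non-factorization argument'' you defer to in the second bullet is both unnecessary and unsubstantiated: you never carry it out, and for a \emph{product} of Grassmannians there is no containment $W_i\subset W_r$ to exploit, so recovering the smaller factors from $W_r$ and surviving Pl\"ucker coordinates has no evident mechanism (that flag-type argument is what the paper uses later, in the proof for $\F(k_1,\ldots,k_r;n)$, where the nesting exists). The same miscount also undermines your first bullet as written: with full blocks on coordinate $r$ in blocks $1,\ldots,\alpha-1$, its removal is $(\alpha-1)(k_r+1)+s_r'=n-k_r+\alpha-2$, violating condition (i) as soon as $\alpha\geq 3$, so lowering the last order to $s'-1$ alone does not ``restore the factorization uniformly in $m$''; the per-block redistribution onto coordinate $r$ (removing $k_r$ there on blocks $j\neq l,\alpha$, so that the total is exactly $n-k_r$) is essential in that case too, and it is the one idea your proposal is missing.
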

\begin{proof}
First, let us assume that $s_r'<k_r$, that is $n-\alpha k_r-1<k_r$, or equivalently
$$n-\alpha k_r<k_r+1\:\Leftrightarrow\:n-\dfrac{(n+1)}{k_r+1}k_r<k_r+1\:\Leftrightarrow\:n<k_r^2+3k_r+1$$
Now, fix a pair of indexes $(l,m)\in\{1,\ldots,\alpha-1\}\times\{1,\ldots,r\}$  and  consider subsets $I_j'^i\subseteq I_j^i$ with $|I_j'^i|=a_{j,i}$ for each $i\in\{1,\ldots,r\}$ and $j\in\{1,\ldots\alpha\}$ such that 
$$a_{j,i}=\left\{\begin{array}{lcl}
k_i&\text{if}& i=m,\: j=l\:\text{ or }\: i= r,\: j\neq l,\alpha;\\
k_i+1&\text{if}& i=r\neq m,\: j=l\:\text{ or }\: i\neq m,r\:\text{ or }\:l\neq m,\alpha;\\
s_i'&\text{if}&j=\alpha,\:i\neq m;\\
s_m'-1&\text{if}&j=\alpha,\: i=m.\\
\end{array}\right.$$
Note that, since $\alpha(k_r+1)-1<n$ we have $a_{j,i}\geq0$ for all $j\in\{1,\ldots,\alpha\}$ and $i\in\{1,\ldots,r\}$. Moreover, if $m\neq r$ then
$$n-\displaystyle\sum_{j=1}^{\alpha}|I_j'^m| = n-(\alpha-2)(k_m+1)-k_m-|I_{\alpha}'^m|\geq n-(\alpha-1)(k_m+1)-(n-\alpha(k_m+1)-1) = k_m+2$$
and $n-\displaystyle\sum_{j=1}^{\alpha}|I_j'^r|=n-(\alpha-2)k_r-(k_r+1)-|I_{\alpha}'^r|\geq n-(\alpha-1)k_r-1-(n-\alpha k_r-1) = k_r$.
If $r=m$ we have
$$n-\displaystyle\sum_{j=1}^{\alpha}|I_j'^r|=n-(\alpha-2)k_r-(k_r+1)-|I_{\alpha}'^r|\geq n-(\alpha-1) k_r-1-(n-\alpha k_r-2)=k_r+1$$
Finally, for $i\neq m,r$ we have
$$n-\displaystyle\sum_{j=1}^{\alpha}|I_j'^i| = n-(\alpha-1)(k_i+1)-|I_{\alpha}'^i| \geq n-(\alpha-1)(k_i+1)-(n-\alpha(k_i+1)) = k_i+1$$
This yields that (i) in Lemma \ref{Lemma2} is satisfied by the sets $I_j'^i$. Moreover, (ii) is satisfied as well. Then, by Lemma \ref{Lemma2} there exists a rational map $\pi_{I_l'^m,I_{\alpha}'^m}$ making the following diagram commutative
\[
  \begin{tikzpicture}[xscale=3.9,yscale=-1.8]
    \node (A0_0) at (0, 0) {$\prod_{i=1}^r\G(k_i,n)$};
    \node (A0_1) at (1, 0) {$\P^{N'_{s,\ldots,s'}}$};
    \node (A1_1) at (1, 1) {$\prod_{i=1}^r\G(k_i,n-\sum_{j=1}^{\alpha} |I_j'^i|)$};
    \path (A0_0) edge [->,dashed]node [auto] {$\scriptstyle{\Pi_{T_{e_{I^1}},\ldots,e_{I^{\alpha-1}}}}$} (A0_1);
    \path (A0_1) edge [->,dashed]node [auto] {$\scriptstyle{\pi_{I_l'^m,I_{\alpha}'^m}}$} (A1_1);
    \path (A0_0) edge [->,dashed,swap]node [auto] {$\scriptstyle{\prod_{i=1}^r\Pi_{I'^i}}$} (A1_1);
  \end{tikzpicture}
\]
where $I'^i=\bigcup_{j=1}^{\alpha}I_j'^i$. 
Now, let $x=\Pi_{T_{e_{I_1},\ldots,e_{I_{\alpha-1}},e_{I_{\alpha}}}^{\hspace{0,1 cm}s\hspace{0,1 cm},\ldots,\hspace{0,1 cm}s,s'-1}}(\{V_i\}_{i=1}^r)$ be a general point in the image of $\Pi_{T_{e_{I_1},\ldots,e_{I_{\alpha-1}},e_{I_{\alpha}}}^{\hspace{0,1 cm}s\hspace{0,1 cm},\ldots,\hspace{0,1 cm}s,s'-1}}$, and $X\subset \prod_{i=1}^r\G(k_i,n)$ be the fiber of $\Pi_{T_{e_{I_1},\ldots,e_{I_{\alpha-1}},e_{I_{\alpha}}}^{\hspace{0,1 cm}s\hspace{0,1 cm},\ldots,\hspace{0,1 cm}s,s'-1}}$ over $x$. Set $x_{I_l'^m,I_{\alpha}'^m}=\pi_{I_l'^m,I_{\alpha}'^m}(x)$ and denote by $X_{I_l'^m,I_{\alpha}'^m}\subset \prod_{i=1}^r\G(k_i,n)$ the fiber of $\prod_{i=1}^r\Pi_{I'^i}$ over $x_{I_l'^m,I_{\alpha}'^m}$.
Therefore, $X\subset \displaystyle\bigcap_{I_l'^m,I_{\alpha}'^m} X_{I_l'^m,I_{\alpha}'^m}$, where the intersection runs over all pairs of sets $I_l'^m$ and $I_{\alpha}'^m$ with $|I_l'^m|=k_m$ and $|I_{\alpha}'^m|=s_m'-1$, and for all pairs of indexes $(l,m)\in\{1,\ldots,r\}\times\{1,\ldots,\alpha-1\}$.
In particular, if $\{W_i\}_{i=1}^r\in X$ is a general point then for every $m\in\{1,\dots,r\}$ we have $W_m\subset\displaystyle\bigcap_{I_l'^m,I_{\alpha}'^m}\langle e_i\:|\:i\in I'^m;V_m\rangle$, where the intersection runs over all pair of sets $I_l'^m$ and $I_{\alpha}'^m$ with $|I_l'^m|=k_m$ and $|I_{\alpha}'^m|=s_m'-1$, and $l\in\{1,\ldots,\alpha-1\}$.
Now, since $|I_l'^m|=k_m$, $s_m'-1\leq k_m$ and $l\in\{1,\ldots,\alpha-1\}$, we must have $\displaystyle\bigcap_{I_l'^m,I_{\alpha}'^m}\langle e_i\:|\:i\in I'^m\rangle=\emptyset$ and then $V_m=\displaystyle\bigcap_{I'^m}\langle e_i\:|\:i\in I'^m\:;\:V_m\rangle$ which yields $W_m=V_m$, for all $m=1,\ldots,r$. 

Now, assume that $n\geq k_r^2+k_r+1$. In this case we have that 
$$n-\alpha k_r -1\geq n-\dfrac{(n+1)}{k_r+1}k_r-1 = \dfrac{n(k_r+1)-(n+1)k_r-(k_r+1)}{k_r+1}\geq k_r$$
and for $i<r$ we have
$$n-\alpha(k_i+1)\geq n-\alpha(k_r)\geq n-\dfrac{(n+1)}{k_r+1}k_r=\dfrac{n-k_r}{k_r+1}\geq \dfrac{k_r^2+3k_r+1-k_r}{k_r+1}=k_r+1>k_i+1$$
Now, for each pair of indexes $(l,m)\in\{1,\ldots,\alpha\}\times\{1,\ldots,r\}$ we can consider subsets $I_j'^i\subseteq I_j^i$ with $|I_j'^i|=a_{j,i}$ for each $i\in\{1,\ldots,r\}$ and $j\in\{1,\ldots\alpha\}$ such that 
$$a_{j,i}=\left\{\begin{array}{lcl}
k_i&\text{if}& i=m,\: j=l\:\text{ or }\: i= r,\: j\neq l;\\
k_i+1&\text{if}& i=r\neq m,\: j=l\:\text{ or }\: i\neq m,r\:\text{ or }\:l\neq m.\\
\end{array}\right.$$
Therefore, arguing as in the proof of the first claim we conclude that  $\Pi_{T_{e_{I_1},\dots,e_{I_{\alpha}}}^{s,\ldots,s}}$ is birational.
\end{proof}

\subsection{Degenerating tangential projections to osculating projections}
In this section we study how the notion of osculating regularity introduced in \cite{MR19} behaves for products of Grassmannians. Let us recall \cite[Definition 5.5, Assumption 5.2]{MR19} and \cite[Definition 4.4]{AMR17}.

\begin{Definition}\label{osc_reg}
Let $X\subset\mathbb{P}^N$ be a projective variety. We say that $X$ has \textit{$m$-osculating regularity} if the following property holds: given general points $p_1,\dots,p_{m}\in X$ and an integer $s\geq 0$, 
there exists a smooth curve $C$ and morphisms $\gamma_j:C\to X$, $j=2,\dots,m$, 
such that  $\gamma_j(t_0)=p_1$, $\gamma_j(t_\infty)=p_j$, and the flat limit $T_0$ in the Grassmannian of the family of linear spaces 
$$
T_t=\left\langle T^{s}_{p_1},T^{s}_{\gamma_2(t)},\dots,T^{s}_{\gamma_{m}(t)}\right\rangle,\: t\in C\backslash \{t_0\}
$$
is contained in $T^{2s+1}_{p_1}$. We say that $\gamma_2,\dots, \gamma_m$ realize the $m$-osculating regularity of $X$ for $p_1,\dots,p_m.$

We say that $X$ has \textit{strong $2$-osculating regularity} if the following property holds: given general points $p,q\in X$ and  integers $s_1,s_2\geq 0$, there exists a smooth curve $\gamma:C\to X$ such that $\gamma(t_0)=p$, $\gamma(t_\infty)=q$ and the flat limit $T_0$ in the Grassmannian of the family of linear spaces 
$$
T_t=\left\langle T^{s_1}_p,T^{s_2}_{\gamma(t)}\right\rangle,\: t\in C\backslash \{t_0\}
$$
is contained in $T^{s_1+s_2+1}_p$.
\end{Definition}
For a discussion on the notions of $m$-osculating regularity and strong $2$-osculating regularity we refer to \cite[Section 5]{MR19} and \cite[Section 4]{AMR17}.

\begin{Proposition}\label{Strong_2_Osc_Reg}
The variety $\prod_{i=1}^r \G(k_i,n)$ has strong $2$-osculating regularity.
\end{Proposition}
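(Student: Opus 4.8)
\emph{The plan is} to reduce to coordinate points, build the connecting curve from a one--parameter unipotent subgroup, and then read the flat limit off the distance description of osculating spaces in Proposition \ref{Osc_Product}.

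By homogeneity of $\prod_{i=1}^r\G(k_i,n)$ under $GL(n+1)$ it suffices to verify Definition \ref{osc_reg} for one pair of general points, which I take to be the coordinate points $p=e_{I_1}$ and $q=e_{I_2}$, with $I_1,I_2$ as in (\ref{Set_Indexes}); the hypothesis $n\geq 2k_r+1$ guarantees that these have disjoint supports. I would connect them by the curve $\gamma(t)=g(t)\cdot p$, where $g(t)=\exp(tN)\in GL(n+1)$ and $N$ is the nilpotent operator with $N(e_j)=e_{j+k_r+1}$ for $0\leq j\leq k_r$ and $N(e_j)=0$ otherwise. Since $N^2=0$ we have $g(t)=\mathrm{Id}+tN$, so $g$ is a genuine curve in $\Aut\big(\prod_{i=1}^r\G(k_i,n)\subset\P^N\big)$; as $g(t)$ carries each $I_1^i=\{0,\dots,k_i\}$ to $I_2^i=\{k_r+1,\dots,k_r+k_i+1\}$, one gets $\gamma(t_0)=p$ at $t_0=0$ and $\gamma(t_\infty)=q$ at $t_\infty=\infty$.

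The point of this choice is equivariance: $g(t)$ acts linearly on $\P^N$ and preserves the variety, so $T^{s_2}_{\gamma(t)}=g(t)\,T^{s_2}_{p}$ and
$$
T_t=\big\langle T^{s_1}_{p},\,g(t)\,T^{s_2}_{p}\big\rangle=\big\langle T^{s_1}_{p},\,\exp(t\tilde N)\,T^{s_2}_{p}\big\rangle,
$$
where $\tilde N$ is the derivation induced by $N$ on $\P^N$ (the differential of the $GL(n+1)$--action), so that $g(t)$ acts as $\exp(t\tilde N)$. By Proposition \ref{Osc_Product} one has $T^{s}_{p}=\langle e_J\ ;\ d(I_1,J)\leq s\rangle$, and a direct check on basis vectors shows that replacing an index by its $N$--image keeps the distance to $I_1$ fixed or raises it by one; hence $\tilde N$ shifts the osculating filtration by at most one, $\tilde N\big(T^{s}_{p}\big)\subseteq T^{s+1}_{p}$, and more generally $\tilde N^{c}\big(T^{s}_{p}\big)\subseteq T^{s+c}_{p}$. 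Everything is thus reduced to the following statement about the filtration $F_{\leq s}:=T^{s}_{p}$: the flat limit as $t\to 0$ of $\big\langle F_{\leq s_1},\,\exp(t\tilde N)F_{\leq s_2}\big\rangle$ is contained in $F_{\leq s_1+s_2+1}$.

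To prove this I would realize a general point of the flat limit as the leading coefficient of a family $r(t)=w_1(t)+\exp(t\tilde N)w_2(t)$ with $w_1(t)\in F_{\leq s_1}$, $w_2(t)\in F_{\leq s_2}$ (Laurent in $t$), and bound the highest osculating level at which such a leading coefficient survives. The inclusion $\tilde N^{c}(F_{\leq s_2})\subseteq F_{\leq s_2+c}$ gives immediately the unconditional estimate that the part of $\exp(t\tilde N)w_2(t)$ lying strictly above level $s_1+s_2+1$ has $t$--order at least $s_1+2$; together with $w_1(t)\in F_{\leq s_1}$ this already disposes of every family whose leading order is at most $s_1+1$. \emph{The hard part} is the opposite regime, in which $w_1$ cancels the low--order terms of $\exp(t\tilde N)w_2$ to high order. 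Here I would use the constraint $\exp(-t\tilde N)r(t)\equiv w_2(t)\in F_{\leq s_2}$ modulo $F_{\leq s_1}$: the more cancellation occurs below level $s_1$, the deeper into the osculating filtration the leading part of $w_2$ is forced, so that each extra order of vanishing is paid for by a drop in the level of the surviving term. Tracking this trade--off---most transparently by passing to the associated graded and monitoring $\mathrm{ord}_t$ level by level---bounds the surviving leading coefficient by level $s_1+s_2+1$, which is the required containment. This valuation bookkeeping, rather than the curve itself, is where the real work lies; it is the product analogue of the single--Grassmannian computation of \cite{MR19}, the product entering only through the total--distance description of the osculating spaces in Proposition \ref{Osc_Product}.
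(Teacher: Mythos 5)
Your setup is exactly the paper's: the rational normal curve used there is precisely $t\mapsto(\mathrm{Id}+tN)\cdot e_{I_1}$ (the paper's auxiliary vectors $e_j^t=e_j+te_{j+k_r+1}$ are $(\mathrm{Id}+tN)e_j$), and the equivariance $T^{s_2}_{\gamma(t)}=g(t)T^{s_2}_p$ together with the filtration shift $\tilde N^c(T^s_p)\subseteq T^{s+c}_p$ is correct. The proof breaks at the sentence ``Everything is thus reduced to the following statement about the filtration'': that abstract statement is \emph{false}. Take $V=\langle u,v,a,b\rangle$ with levels $0,0,1,2$, so $F_{\leq 0}=\langle u,v\rangle$, $F_{\leq 1}=\langle u,v,a\rangle$, and let $\tilde N v=u$, $\tilde N u=a$, $\tilde N a=b$, $\tilde N b=0$; then $\tilde N F_{\leq s}\subseteq F_{\leq s+1}$ for all $s$. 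With $s_1=s_2=0$ and $x(t)=v-\tfrac{t}{2}u\in F_{\leq 0}$ one computes $\exp(t\tilde N)x(t)=v+\tfrac{t}{2}u-\tfrac{t^3}{12}\,b$, so $r(t)=\exp(t\tilde N)x(t)-\bigl(v+\tfrac{t}{2}u\bigr)=-\tfrac{t^3}{12}\,b$ lies in $T_t=\bigl\langle F_{\leq 0},\exp(t\tilde N)F_{\leq 0}\bigr\rangle$ for all $t\neq 0$; hence the flat limit contains $b$, which sits at level $2>s_1+s_2+1=1$. (Here $T_t=\langle u,v,a,b\rangle$ is even constant.) Note that the mechanism enabling the escape is the degree-zero part of $\tilde N$ --- exactly the terms you correctly observe exist, where replacing an index \emph{keeps} the distance to $I_1$ fixed --- combined with $t$-dependent cancellation; so the inclusions $\tilde N^c(T^s_p)\subseteq T^{s+c}_p$ alone can never yield the required containment, and the ``valuation bookkeeping'' you defer is not bookkeeping but the entire content of the proposition.

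What the paper supplies, and what any completion of your argument would have to reproduce, is finer structure of two kinds. First, a sharp expansion along the curve: for the relevant spanning vectors one has $e_I^t=\sum_{J\in\Delta(I)^+}t^{d(J,I)}e_J$, where the $t$-adic order equals the distance increment \emph{exactly} (the sets $\Delta(I,l)$ only shift indices lying in $I\cap I_1$, so no degree-zero contamination occurs among the vectors actually used, and the signs can be normalized away). Second, a genuine nondegeneracy statement: for every index $I$ with $d(I_1,I)>s_1+s_2+1$ the paper constructs a hyperplane $Z_I+t\bigl(\sum_{J\neq I}f_J(t)Z_J\bigr)=0$ containing $T_t$ for all $t\neq 0$, and the existence of such a hyperplane with $c_I\neq 0$ reduces to the maximal rank of an explicit matrix of binomial coefficients, which is nonzero by \cite[Corollary 2]{GV85}. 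This quantitative step is not formal --- as the four-dimensional example above shows, it genuinely uses the combinatorics of the distance function on $\Lambda$ and the precise coefficients of $\exp(t\tilde N)$ --- so your proposal, while it chooses the right curve and the right reduction to a linear-algebra problem, has a genuine gap precisely where it declares the remaining work to be routine.
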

\begin{proof}
Let $p,q\in\prod_{i=1}^r \G(k_i,n)$  be general points. We may assume that $p=e_{I_1}$ and $q=e_{I_2}$ with $I_1, I_2$ as in (\ref{Set_Indexes}) and consider the degree $r+\sum_{i=1}^r k_i$ rational normal curve given by
$$\gamma(s:t)=\prod_{i=1}^r (se_0+te_{k_r+1})\wedge\cdots\wedge(se_{k_i}+te_{k_r+k_i+1})$$
We work on the affine chart $s=1$ and set $t=(1:t)\in\p^1$. Now, consider the points
$$e_0,\ldots,e_n,e_0^t=e_0+te_{k_r+1},\ldots,e_{k_r}^t=e_{k_r}+te_{2k_r+1},e_{k_r+1}^t=e_{k_r+1},\ldots,e_{n}^t=e_{n}$$
and, for each $I=\{I^1,\ldots,I^r\}\in \Lambda$, the corresponding points in $e_{I}^t=e_{I^1}^t\otimes e_{I^2}^t\otimes\cdots\otimes e_{I^r}^t\in\prod_{i=1}^r \G(k_i,n)$ where, setting $I^j=\{i_1^j,\ldots,i_{k_j}^j\}$, $e_{I^j}^t=e_{i_1^j}^t\wedge\cdots\wedge e_{i_{k_j}^j}^t$.

Given integers $s_1,s_2\geq0$, let us consider the family of linear spaces
$$T_t=\langle T_{e_{I_1}}^{s_1},T_{e_{\gamma(t)}}^{s_2}\rangle, \:\:t\in\p^1\setminus\{0\}$$
By Proposition \ref{Osc_Product} we have
$$T_t=\langle e_{J}\: | \:d(I_1,J)\leq s_1\:;\:e_{J}^t\: | \:d(I_2,J)\leq s_2\rangle,\: t\neq 0$$
and
$$T_{e_{I_1}}^{s_1+s_2+1} = \langle e_{J}\: | \: d(I_1,J)\leq s_1+s_2+1 \rangle=\{Z_J=0\: | \:d(I_1,J)> s_1+s_2+1\}$$
Now, let $T_0$ be the flat limit of $\{T_t\}_{t\in\p^1\setminus \{0\}}$, we want to show that $T_0\subset T_p^{s_1+s_2+1}$. In order to do this it is enough to exhibit, for each index $I\in \Lambda$ with $d(I_1,I)> s_1+s_2+1$, a hyperplane $H_{I}$ of type $Z_{I}+t\left(\displaystyle\sum_{J\neq I}f_J(t)Z_J\right)=0$ where $f_J(t)\in\C[t]$ for every $J$. We define, for each $l\geq0$ and $I=\{I^1,\ldots,I^r\}\in \Lambda$,
$$\Delta(I,l)=\left\{\{(I^j\setminus J^j)\cup(J^j+k_r+1)\}_{1\leq j\leq r}\:|\:J^j\subset I^j\cap I_1^j\text{ and }\sum|J^j|=l\right\}\subset \Lambda$$
Furthermore, for each $l>0$ we define
$$\Delta(I,-l)=\{J\: | \:I\in \Delta(J,l)\};$$
$$s_{I}^+=\max_{l\geq0}\{\Delta(I,l)\neq\emptyset\}\in\{0,\ldots,\sum k_j+r\}$$
$$s_{I}^-=\max_{l>0}\{\Delta(I,-l)\neq\emptyset\}\in\{0,\ldots,\sum k_j+r\}$$
$$\Delta(I)^+=\displaystyle\bigcup_{0\leq l}\Delta(I,l)=\displaystyle\bigcup_{0\leq l\leq s_{I}^+}\Delta(I,l)$$
$$\Delta(I)^-=\displaystyle\bigcup_{0\leq l}\Delta(I,-l)=\displaystyle\bigcup_{0\leq l\leq s_I^-}\Delta(I,-l)$$
Now, let us write $e_I^t$ with $d(I_1,I)\leq s_2$, in the basis $e_J$ with $J\in\Lambda$. For any $I\in \Lambda$ we have
$$\begin{array}{ccl}
e_{I}^t&=&e_{I}+t\displaystyle\sum_{J\in\Delta(I,1)}\text{sign}(J)e_{J}+ \dots +t^{s_{I}^+}\displaystyle\sum_{J\in\Delta(I,s_{I}^+)}\text{sign}(J)e_{J}\\
&=&\displaystyle\sum_{l=0}^{s_{I}^+}\left(t^l\displaystyle\sum_{J\in\Delta(I,l)}\text{sign}(J)e_{J}\right)=\displaystyle\sum_{J\in\Delta(I)^+}t^{d(J,I)}\text{sign}(J)e_{J}.
\end{array}$$
where $\text{sign}(J)=\pm 1$. Note that $\text{sign}(J)$ depends on $J$ but not on $I$, then we can write $e_{I}^t=\displaystyle\sum_{J\in\Delta(I)^+}t^{d(J,I)}e_{J}$. Therefore, we have
$$T_t=\left\langle e_I\: | \:d(I_1,I)\leq s_1\: ; \:\displaystyle\sum_{J\in\Delta(I)^+}t^{d(J,I)}e_J\: | \:d(I_1,I)\leq s_2\right\rangle$$
Finally, we define
$$\Delta:=\{I\: : \:d(I_1,I)\leq s_1\}\bigcup\left(\displaystyle\bigcup_{ \:d(I_1,I)\leq s_2}\Delta(I)^+\right)\subset\Lambda$$
Let $I\in \Lambda$ be an index such that $d(I_1,I)>s_1+s_2+1$. If $I\notin \Delta$ then $T_t\subset\{Z_I=0\}$ for any $t\neq 0$ and we are done.

Assume that $I\in \Delta$. For any $e_K^t$ with non-zero coordinate $Z_I$ we have $I\in \Delta(K)^{+}$, that is $K\in \Delta(I)^-$. Now, it is enough to find a hyperplane $H_I$ of type 
$$
F_I=\displaystyle\sum_{J\in\Delta(I)^-}t^{d(J,I)}c_JZ_J=0
$$
with $c_J\in \C$ and $c_I\neq 0$, and such that $T_t\subset H_I$ for each $t\neq 0$. In the following, let us write $s_{i,I}^-=s$. Now, let us check what conditions we get by requiring $T_t\subset \{F_{I}=0\}$ for $t\neq 0$. Given $K\in \Delta(I)^-$ we have that $d(I,K)\leq s_{K}^+$ and 
$$\begin{array}{ccccl}
F_{I}(e_{K}^t)&=&F_{I}\left(\displaystyle\sum_{J\in\Delta(K)^+}t^{d(J,K)}e_{J}\right)&=&\displaystyle\sum_{J\in\Delta(I)^-}t^{d(J,I)}c_{J}\left(\displaystyle\sum_{J\in\Delta(K)^+}t^{d(J,K)}e_{J}\right)\\
&=&\displaystyle\sum_{J\in\Delta(I)^-\cap\Delta(K)^+}t^{d(J,I)+d(J,K)}c_{J}&=&t^{d(I,K)}\left[\displaystyle\sum_{J\in\Delta(I)^-\cap\Delta(K)^+}c_{J}\right]\\
\end{array}$$
Therefore, 
$$F_I(e_K^t)=0\:\:\forall\:t\neq0\Leftrightarrow\displaystyle\sum_{J\in\Delta(I)^-\cap\Delta(K)^+}c_{J}=0$$
Note that this is a linear condition on the coefficients $c_{J}$, with $J\in \Delta(I)^-$. Hence
\stepcounter{thm}
\begin{equation}\label{System1}
\begin{array}{ccl}
T_t\subset\{F_I=0\}\text{ for }t\neq0&\Leftrightarrow&\left\{\begin{array}{ll}F_{I}(e_{K})=0&\forall K\in \Delta(I)^-\cap B[I_1,s_1]\\
F_{I}(e_{K}^t)=0&\forall K\in \Delta(I)^-\cap B[I_1,s_2]\\
\end{array}\right.\\
&\Leftrightarrow&\left\{\begin{array}{ll}c_{K}=0&\forall K\in \Delta(I)^-\cap B[I_1,s_1]\\
\displaystyle\sum_{J\in\Delta(I)^-\cap\Delta(K)^+}c_J=0&\forall K\in \Delta(I)^-\cap B[I_1,s_2]\\
\end{array}\right.
\end{array}
\end{equation}
where $B[J,l]:=\{K\in\Lambda\:|\:d(J,K)\leq l\}$. The number of conditions on the $c_J$ is then $c:= |\Delta(I)^-\cap B[I_1,s_1]|+| \Delta(I)^-\cap B[I_1,s_2]|$.

The problem is now reduced to find a solution of the linear system given by the $c$ equations (\ref{System1}) in the $|\Delta(I)^-|$ variables $c_{J}$,  $J\in  \Delta(I)^-$ such that $c_I\neq 0$. Therefore, it is enough to find $s+1$ complex numbers $c_I=c_0\neq0,c_1,\ldots,c_s$ satisfying the following conditions
\stepcounter{thm}
\begin{equation}\label{system2}
\left\{\begin{array}{ll}c_j=0&\forall\:\: j=s,\ldots,d-s_1\\
\displaystyle\sum_{m=0}^{d(I,K)}|\Delta(I)^-\cap\Delta(K,l)|c_{d(I,K)-m}=0&\forall \:\:K\in \Delta(I)^-\cap B[I_1,s_2]\\
\end{array}\right.
\end{equation}
where $d=d(I_1,I)>s_1+s_2+1$. Note that (\ref{system2}) can be written as 
$$
\left\{\begin{array}{ll}c_j=0&\forall \:\:j=s,\ldots,d-s_1\\
\displaystyle\sum_{m=0}^j{{j}\choose{j-m}}c_m=0&\:\:\forall\:\: j=s,\ldots,d-s_2
\end{array}\right.
$$
that is
\stepcounter{thm}
\begin{equation}\label{system4}
\begin{array}{ll}
\left\{\begin{array}{l}
c_s=0\\
\vdots\\
c_{d-s_1}=0
\end{array}\right. & \left\{\begin{array}{l}
\binom{s}{0}c_{s}+\binom{s}{1}c_{s-1}+\dots+\binom{s}{s-1}c_1+\binom{s}{s}c_0=0\\
\vdots\\
\binom{d-s_2}{0}c_{d-s_2}+\binom{d-s_2}{1}c_{d-s_2-1}+\dots+\binom{d-s_2}{d-s_2-1}c_1+\binom{d-s_2}{d-s_2}c_0=0
\end{array}\right.
\end{array}
\end{equation}
Now, it is enough to show that the linear system (\ref{system4}) admits a solution  with $c_0\neq0$. If, $s<d-s_2$ then the system (\ref{system4}) reduces to $c_s=\cdots=c_{d-s_1}=0$ and then we can take $c_0=1$ and $c_1=\cdots=c_s=0$, since $d-s_1>s_2+1>1$.

So, let us assume that $s\geq d-s_2$. Since $c_s=\cdots=c_{d-s_1}=0$ our problem is translated into checking that the system (\ref{system4}) admits a solution involving the variables $c_{d-s_1-1},\dots,c_0$ with $c_0\neq 0$. First of all, note that the system (\ref{system4}) can be rewritten as follows
$$
\left\{\begin{array}{l}
\binom{s}{s-(d-s_1-1)} c_{d-s_1-1}+\binom{s}{s-(d-s_2-2)}c_{d-s_1-2}+\dots+\binom{s}{s-1}c_1+\binom{s}{s}c_0=0\\
\vdots\\
\binom{d-s_2}{d-s_2-(d-s_1-1)}c_{d-s_1-1}+\binom{d-s_2}{d-s_2-(d-s_1-2)}c_{d-s_1-2}+\dots+\binom{d-s_2}{d-s_2-1}c_1+\binom{d-s_2}{d-s_2}c_0=0
\end{array}\right.
$$
Thus, it is enough to check that the $(s-d+s_2+1)\times(d-s_1-1)$ matrix
$$
M=\left(\begin{matrix}
\binom{s}{s-(d-s_1-1)} & \binom{s}{s-(d-s_1-2)} & \dots & \binom{s}{s-1}\\
\vdots &\vdots & \ddots & \vdots\\
\binom{d-s_2}{d-s_2-(d-s_1-1)} &\binom{d-s_2}{d-s_2-(d-s_1-2)} & \dots & \binom{d-s_2}{d-s_2-1}
\end{matrix}\right)
$$
has maximal rank. Now, note that $s\leq d$ and $d>s_1+s_2+1$ yield $s-d+s_2+1<s-s_1\leq d-s_1$ and then $s-d+s_2+1\leq d-s_1-1$. Therefore, we have to show that the $(s-d+s_2+1)\times(s-d+s_2+1)$ submatrix
$$
\begin{array}{ccl}
M'&=& \left(\begin{matrix}
\binom{s}{s-d+s_2+1} & \binom{s}{s-d+s_2} & \dots & \binom{s}{1}\\
\vdots &\vdots & \ddots & \vdots\\
\binom{d-s_2}{s-d+s_2+1} &\binom{d-s_2}{s-d+s_2} & \dots & \binom{d-s_2}{1}
\end{matrix}\right)
\end{array}$$
has non-zero determinant. Finally, since $d-s_2>s_1+1\geq 1$ \cite[Corollary 2]{GV85} yields that $\det(M')\neq 0$.
\end{proof}

\begin{Proposition}\label{Osc_Reg}
Set $\alpha=\left\lfloor \frac{n+1}{k_r+1}\right\rfloor$. Then, the variety $\prod_{i=1}^r \G(k_i,n)$ has $\alpha$-osculating regularity.
\end{Proposition}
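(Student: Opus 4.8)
The plan is to prove $\alpha$-osculating regularity by the same strategy used for strong $2$-osculating regularity in Proposition \ref{Strong_2_Osc_Reg}, now degenerating the $\alpha-1$ points $p_2,\dots,p_\alpha$ toward $p_1$ \emph{simultaneously} along one-parameter families supported on pairwise disjoint groups of coordinates. First I would reduce to a convenient normal form: since $GL(n+1)$ acts transitively on $\prod_{i=1}^r\G(k_i,n)$, general points $p_1,\dots,p_\alpha$ may be taken to be the coordinate points $e_{I_1},\dots,e_{I_\alpha}$ of (\ref{Set_Indexes}), whose supporting subspaces are linearly independent in $\C^{n+1}$; the value $\alpha=\lfloor (n+1)/(k_r+1)\rfloor$ is exactly the maximal number of such points, which is why $\alpha$-osculating regularity is the natural statement. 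For $j=2,\dots,\alpha$ I would then introduce, on the common curve $C=\P^1$, the degree $r+\sum_{i=1}^r k_i$ morphism $\gamma_j$ obtained, as in Proposition \ref{Strong_2_Osc_Reg}, by replacing each basis vector $e_l$ with $l\in\{0,\dots,k_r\}$ by $e_l+t\,e_{l+(k_r+1)(j-1)}$ and fixing the remaining $e_m$, so that $\gamma_j(t_0)=e_{I_1}=p_1$ and $\gamma_j(t_\infty)=e_{I_j}=p_j$. The essential feature is that $\gamma_j$ carries the first block of coordinates only into the $j$-th block, and these target blocks are pairwise disjoint.

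Next I would make $T_t=\langle T^s_{e_{I_1}},T^s_{\gamma_2(t)},\dots,T^s_{\gamma_\alpha(t)}\rangle$ explicit. By Proposition \ref{Osc_Product} one has $T^s_{e_{I_1}}=\langle e_J : d(I_1,J)\le s\rangle$, while each $T^s_{\gamma_j(t)}$ is spanned by the deformed vectors $e^{t,j}_J$ with $d(I_1,J)\le s$, where $e^{t,j}_J=\sum_K t^{d(K,J)}\mathrm{sign}(K)\,e_K$ is the expansion produced by $\gamma_j$ exactly as $e^t_I=\sum_{J\in\Delta(I)^+}t^{d(J,I)}e_J$ in the proof of Proposition \ref{Strong_2_Osc_Reg}, but now with block $1$ feeding into block $j$ only. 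To identify the flat limit $T_0$ it then suffices, as before, to exhibit for every index $I\in\Lambda$ with $d(I_1,I)>2s+1$ a hyperplane $H_I$ of the form $Z_I+t\bigl(\sum_{J\ne I}f_J(t)Z_J\bigr)=0$, with $f_J(t)\in\C[t]$, containing $T_t$ for all $t\ne 0$; the existence of all such $H_I$ forces $T_0\subseteq\{Z_J=0 : d(I_1,J)>2s+1\}=T^{2s+1}_{e_{I_1}}$, which is precisely the containment required by Definition \ref{osc_reg}.

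The construction of $H_I$ is where the disjointness of the blocks does the real work. Because $\gamma_j$ is the identity outside the first block and carries it into block $j$, the coordinate $Z_I$ can appear in an expansion $e^{t,j}_J$ only through the single channel $j$ whose target block meets the support of $I$; a coordinate $I$ whose moved support spread over two distinct target blocks never occurs, so for such $I$ one has $T_t\subset\{Z_I=0\}$ automatically. For the remaining $I$ the conditions $H_I(e^{t,j}_K)=0$ therefore involve only one curve at a time, and writing $H_I=\sum_{J}t^{d(J,I)}c_J Z_J$ reduces, channel by channel, to exactly the linear system (\ref{System1})--(\ref{system4}) already solved in Proposition \ref{Strong_2_Osc_Reg} in the case $s_1=s_2=s$. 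Solvability with $c_I\neq 0$ again comes down to the non-vanishing of a binomial determinant, guaranteed by \cite[Corollary 2]{GV85}.

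The main obstacle is precisely this decoupling step: a priori the flat limit of a span of several moving osculating spaces can be strictly larger than what each pair contributes, so one must check that the simultaneous degenerations do not interact to produce limiting directions outside $T^{2s+1}_{e_{I_1}}$. The disjointness of the blocks $I_2,\dots,I_\alpha$ --- available exactly because we use $\alpha=\lfloor(n+1)/(k_r+1)\rfloor$ points under the standing hypothesis $n\ge 2k_r+1$ --- is what rules this out, letting each bad coordinate be controlled by a single two-point computation. A secondary point to verify is that for every $j$ the target block $\{(k_r+1)(j-1),\dots,(k_r+1)(j-1)+k_r\}$ is contained in $\{0,\dots,n\}$, which holds since $(k_r+1)\alpha\le n+1$, so that all $\alpha-1$ curves $\gamma_j$ are well-defined.
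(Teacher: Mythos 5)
Your route is the paper's own: the same rational normal curves $\gamma_j$ joining $e_{I_1}$ to each $e_{I_j}$, the same family $T_t$, the same strategy of exhibiting for every bad index $I$ (i.e.\ $d(I_1,I)>2s+1$) a hyperplane $Z_I+t\bigl(\sum_{J\neq I}f_J(t)Z_J\bigr)=0$ containing $T_t$, and the same final reduction to the binomial linear system settled by \cite[Corollary 2]{GV85}. The genuine gap is in your decoupling step, which you derive from block-disjointness alone; as stated it fails in both directions. In the paper's notation, an index in $\Delta(K)^+_{j}$ is obtained from some $K$ with $d(K,I_1)\leq s$ by moving part of $K\cap I_1$ into the $j$-th block, but $K$ itself may already contain elements of \emph{other} target blocks, and these persist. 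So your claim that ``a coordinate whose moved support spreads over two distinct target blocks never occurs, hence $T_t\subset\{Z_I=0\}$ automatically'' is false: such coordinates do occur in the expansions $e^{j,t}_K$ and do require hyperplanes, and for them ``the single channel whose target block meets the support of $I$'' is not even well defined. Worse, for indices at small distance from $I_1$ the same coordinate genuinely can be reached through two different channels, so disjointness of the blocks $I_2,\ldots,I_\alpha$ cannot by itself preclude interaction; nothing in your mechanism uses the hypothesis $d(I_1,I)>2s+1$, which is exactly what is needed.

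What rescues the argument in the paper, and is missing from your sketch, is a quantitative counting argument: if $I\in\Delta(K_1,l_1)_{j_1}\cap\Delta(K_2,l_2)_{j_2}$ with $j_1\neq j_2$ and $d(K_1,I_1),d(K_2,I_1)\leq s$, then splitting the support of $I$ into its part in $I_1$, the parts coming from the two channels, and the rest, one gets $d(I,I_1)\leq d(K_1,I_1)+d(K_2,I_1)\leq 2s$, contradicting $d(I_1,I)>2s+1$; hence each \emph{bad} index is reached through a unique channel $j_I$ (while harmless indices may well be reached through several). A second instance of the same counting, the paper's (\ref{eq3}), is also needed and absent from your proposal: the hyperplane $H_I$ is supported on indices $J$ close to $I$ in the channel $j_I$, and one must verify that no such $J$ appears in an expansion $e^{j,t}_K$ with $j\neq j_I$ and $d(K,I_1)\leq s$ (this is done by comparing $|J\cap I_{j_I}|$ computed through the two channels); otherwise imposing $T_t\subset H_I$ would entangle the channels and the resulting system would not be the two-point system of Proposition \ref{Strong_2_Osc_Reg}. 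With these two counting lemmas supplied, your channel-by-channel reduction to (\ref{System1})--(\ref{system4}) with $s_1=s_2=s$ and the appeal to \cite[Corollary 2]{GV85} go through exactly as in the paper.
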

\begin{proof}
First of all, note that if $\alpha=2$ then the statement follows form Proposition \ref{Strong_2_Osc_Reg}. Then we may assume $\alpha\geq 3$.

Let $p_1,\ldots,p_{\alpha}\in\prod_{i=1}^r \G(k_i,n)$ be general points. We may assume that $p_j=e_{I_j}$ for $j=1,\ldots,\alpha$. Each $e_{I_j}$, $j\geq2$, is connected to $e_{I_1}$ by the degree $r+\sum_{i=1}^r k_i$ rational normal curve defined by
$$\gamma_j(s:t)=\prod_{i=1}^r(se_0+te_{(k_r+1)(j-1)})\wedge\cdots\wedge(se_{k_i}+te_{(k_r+1)(j-1)+k_i}) $$
We work on the affine chart $s=1$ and set $t=(1:t)$. Now, given $s\geq 0$ we consider the family of linear subspaces
$$T_t=\langle T_{e_{I_1}}^s,T_{\gamma_2(t)}^s,\ldots,T_{\gamma_{\alpha}(t)}^s\rangle,\:\:t\in \p^1\setminus\{0\}$$
Our goal is to show that the flat limit $T_0$ of $\{T_t\}_{t\in\p^1\setminus\{0\}}$ in $\G(\dim(T_t),N)$ is contained in $T^{2s+1}_{e_{I_1}}$. In order to do this, let us consider the points
$$e_0,\ldots,e_n,e_0^{j,t}=e_0+te_{(k_r+1)(j-1)},\ldots, e_{k_r}^{j,t}=e_{k_r}+te_{(k_r+1)j-1},e_{k_r+1}^{j,t}=e_{k_r+1},\ldots,e_n^{j,t}=e_n$$
and, for each $I=\{I^1,\ldots,I^r\}\in \Lambda$ and $j=2,\ldots,\alpha$, the corresponding points in $e_{I}^{j,t}=e_{I^1}^{j,t}\otimes e_{I^2}^{j,t}\otimes\cdots\otimes e_{I^r}^{j,t}\in\p^N$. By Proposition \ref{Osc_Product} we have
$$T_t=\langle e_{I}\: | \:d(I_1,I)\leq s;e_{I}^{j,t}\: | \:d(I_j,I)\leq s,\:j=2,\ldots,\alpha\rangle,\: t\neq 0$$
and
$$
T_{e_{I_1}}^{2s+1} = \langle e_{J}\: | \: d(I_1,J)\leq 2s+1 \rangle = \{Z_J=0\: | \:d(I_1,J)> 2s+1\}
$$
In order to show that $T_0\subset T_p^{2s+1}$, it is enough to exhibit, for each index $I\in \Lambda$ with $d(I_1,I)> 2s+1$, an hyperplane $H_{I}$ of type $Z_{I}+t\left(\displaystyle\sum_{J\neq I}f_J(t)Z_J\right)=0$ such that $T_t\subset \{H_i=0\}$ for $t\neq0$. 

For each $l\geq 0$, $j=2,\ldots,\alpha$ and $I=\{I^1,\ldots,I^r\}\in \Lambda$ we define
$$\Delta(I,l)_j=\left\{\{(I^k\setminus J^k)\cup(J^k+(j-1)(k_r+1)\}_{1\leq k\leq r}\:|\:J^k\subset I^k\cap I_1^k\text{ and }\sum|J^k|=l\right\}\subset \Lambda$$
where $L+\lambda=\{i+\lambda \:|\:i\in L\}$ is the translation of the set $L$ by the integer $\lambda$. For any $l>0$ we define
$$\Delta(I,-l)_j=\{J\: | \:I\in \Delta(J,l)_j\}$$
$$s_{I,j}^+=\max_{l\geq0}\{\Delta(I,l)_j\neq\emptyset\}\in\{0,\ldots,\sum k_j+r\}$$
$$s_{I,j}^-=\max_{l>0}\{\Delta(I,-l)_j\neq\emptyset\}\in\{0,\ldots,\sum k_j+r\}$$
$$\Delta(I)^+_j=\displaystyle\bigcup_{0\leq l}\Delta(I,l)_j=\displaystyle\bigcup_{0\leq l\leq s_{I,j}^+}\Delta(I,l)_j$$
$$\Delta(I)^-_j=\displaystyle\bigcup_{0\leq l}\Delta(I,-l)_j=\displaystyle\bigcup_{0\leq l\leq s_{I,j}^-}\Delta(I,-l)_j$$
Note that for any $l$ we have
\stepcounter{thm}
\begin{equation}\label{eq1}
J\in\Delta(I,l)_j\:\Rightarrow\:d(J,I)=|l|\:\text{ and }\:d(J,I_1)=d(I,I_1)+l
\end{equation}

We will write $e_I^t$ with $d(I_1,I)\leq s$, in the basis $e_J$ with $J\in\Lambda$. For any $I\in \Lambda$ we have
$$\begin{array}{ccl}
e_{I}^{j,t}&=&e_{I}\:+\:t\displaystyle\sum_{J\in\Delta(I,1)_j}\text{sign}(J)e_{J}\:+\: \cdots \: +\:t^{s_{I,j}^+}\displaystyle\sum_{J\in\Delta(I,s_{I,j}^+)}\text{sign}(J)e_{J}\\
&=&\displaystyle\sum_{l=0}^{s_{I,j}^+}\left(t^l\displaystyle\sum_{J\in\Delta(I,l)_j}\text{sign}(J)e_{J}\right)=\displaystyle\sum_{J\in\Delta(I)_j^+}t^{d(J,I)}\text{sign}(J)e_{J}.
\end{array}$$
where $\text{sign}(J)=\pm 1$. Note that $\text{sign}(J)$ depends on $J$ but not on $I$, then we can write $e_{I}^{j,t}=\displaystyle\sum_{J\in\Delta(I)_j^+}t^{d(J,I)}e_{J}$. Therefore, we have
$$T_t=\left\langle e_I\: | \:d(I_1,I)\leq s\: ; \:\displaystyle\sum_{J\in\Delta(I)_j^+}t^{d(J,I)}e_J\: | \:d(I_1,I)\leq s,\:2\leq j\leq\alpha\right\rangle$$
Finally we define
$$\Delta:=\{I\: : \:d(I_1,I)\leq s\}\bigcup\left(\displaystyle\bigcup_{\substack{\:d(I_1,I)\leq s\\2\leq j\leq\alpha}}\Delta(I)_j^+\right)\subset\Lambda$$
Let $I\in \Lambda$ be an index such that $d(I_1,I)>2s+1$. If $I\notin \Delta$, then $T_t\subset\{Z_I=0\}$ for any $t\neq 0$ and we are done. 

Now, assume that $I\in \Delta$. We will show that $\Delta(K_1)_{j_1}^+\cap\Delta(K_2)_{j_2}^+=\emptyset$ whenever $K_1,K_2\in \Lambda$ with $d(K_1,I_1),d(K_2,I_2)\leq s$ and $2\leq j_1,j_2\leq\alpha$ with $j_1\neq j_2$. 

In fact, suppose that $\Delta(K_1)_{j_1}^+\cap\Delta(K_2)_{j_2}^+\neq \emptyset$, that is there exists $I\in \Lambda$ such that 
$$I\in \Delta(K_1,l_1)_{j_1}\cap \Delta(K_2,l_2)_{j_2} \text{ for some } l_1 \text{ and } l_2$$
Now, consider the following sets
$$\begin{array}{ccl}
I^0&:=&I\cap I_1\\
I^1&:=&I\cap \{K_1+(j_1-1)(k_r+1)\}\\
I^2&:=&I\cap \{K_2+(j_2-1)(k_2+1)\}\\
I^3&:=&I\setminus (I^0\cup I^1\cup I^2)
\end{array}$$
Since $I\in \Delta(K_1,l_1)_{j_1}\cap \Delta(K_2,l_2)_{j_2}$ we have $|I^1|=l_1$ and $|I^2|=l_2$. Set $|I^3|=u$, then
$$d(I,I_1)=l_1+l_2+u\leq l_1+l_2+2u\stackrel{(\ref{eq1})}{=} d(K_1,I_1)+d(K_1,I_1)\leq 2s$$
contradicting $d(I_1,I)>2s+1$. Therefore we conclude that there is a unique $j_I$ for which 
$$I\in \bigcup_{d(I_1,I)\leq s}\Delta(I)_{j_I}^+$$
Now, let $J\in \Lambda$ such that $d(J,I_1)\leq s$ and $I\in \Delta(J)^+_{j_I}$. Note that 
$$d(I,I_1)-s(I)^-_{j_I}\leq d(I,I_1)-d(I,J)=d(J,I_1)\leq s\:\Rightarrow\:s+1-D+s(I)^-_{j_I}>0$$
where $D=d(I,I_1)>2s+1$. We define
$$\Gamma(I)=\displaystyle\sum_{0\leq l\leq s+1-D+s(I)^-_{j_I}}\Delta(I,-l)_{j_I}\subset\Gamma$$
Our aim now is to find a hyperplane of the form
\stepcounter{thm}
\begin{equation}\label{eqHI}
H_I=\left\{\displaystyle\sum_{J\in\Gamma(I)}t^{d(J,I)}c_JZ_J=0\right\}
\end{equation}
such that $T_t\subset H_I$ and $c_I\neq0$. First, note that
\stepcounter{thm}
\begin{equation}\label{eq3}
J\in \Gamma(I)\:\Rightarrow\:J\notin\displaystyle\bigcup_{\substack{\:d(I_1,K)\leq s\\2\leq j\leq\alpha\:;\:j\neq j_I}}\Delta(K)_j^+
\end{equation}
In fact, suppose that $J\in \Delta(I,-l)_{j_I}\cap\Delta(K,m)_j$, for some $K\in \Lambda$ with $d(K,I_1)\leq s$, and $0\leq j\leq s+1-D+s(I)^-_{j_I}$ with $j\neq j_I$. Then, since $J\in \Delta(I,-l)_{j_I}$ we have
$$|J\cap I_{j_I}|=|I\cap I_{j_I}|-l\geq s(I)^-_{j_I}-l\geq D-k-1>s$$
On the other hand, since $J\in \Delta(K,m)_j$ with $j\neq j_I$ we have
$$|J\cap I_{j_I}|=|K\cap I_{j_I}|\leq d(K,I_1)\leq s$$
which is a contradiction. Now, note that if $K\in \Lambda$ is such that $d(K,I_1)\leq s$ and $K\in \Gamma(I)$, then
$$d(K,I_1) = d(I,I_1)-d(I,K) > 2s+1-(s+1-D+s(I)^-_{j_I})> s+D-s(I)^-_{j_I}>s$$
Thus (\ref{eq3}) yields that the hyperplane $H_I$ given by (\ref{eqHI}) is such that 
$$\left\langle e_K\: | \:d(I_1,K)\leq s\: ; \:\displaystyle\sum_{J\in\Delta(K)_j^+}t^{d(J,K)}e_J\: | \:d(I_1,K)\leq s,\:2\leq j\leq \alpha\:;\:j\neq j_I\right\rangle\subset H_I,\:t\neq0$$
Therefore
$$T_t\subset H_I,\:t\neq0\:\Leftrightarrow\:\left\langle \displaystyle\sum_{J\in\Delta(K)_{j_I}^+}t^{d(J,K)}e_J\: | \:d(I_1,K)\leq s\right\rangle\subset H_I,\:t\neq0$$
Now, arguing as in the proof of Proposition \ref{Strong_2_Osc_Reg} we get
\stepcounter{thm}
\begin{equation}\label{eq4}
T_t\subset H_I,\:t\neq0\:\Leftrightarrow\: \displaystyle\sum_{J\in\Delta(K)_{j_I}^+\cap\Gamma(I)}c_J=0,\:\:\:\forall \:K\in\Delta(I)^-_{j_I}\cap B[I_1,s]
\end{equation}
So, the problem is reduced to find a solution $(c_J)_{J\in\Gamma(I)}$ for the linear system (\ref{eq4}) such that $c_I\neq 0$. We set $c_J=c_{d(I,J)}$ and reduce, as in the proof of Proposition \ref{Strong_2_Osc_Reg}, to the linear system
\stepcounter{thm}
\begin{equation}\label{eq5}
\displaystyle\sum_{l=0}^{s+1+D-s(I)^-_{j_I}}{{D-i}\choose{D-i-l}}c_l,\:\:D-S(I)^-_{j_I}\leq i\leq k
\end{equation}
We have $s+2+D-s(I)^-_{j_I}$ variables $c_0,\ldots,c_{s+1+D-s(I)^-_{j_I}}$ and $s+1+D-s(I)^-_{j_I}$ equations. Finally, the argument used in the last part of the proof of Proposition \ref{Strong_2_Osc_Reg} shows that the linear system (\ref{eq5}) admits a solution with $c_0\neq 0$.
\end{proof}

\section{On secant defectivity of products of Grassmannians}\label{sec2}
Let $X\subset\p^N$ be an irreducible non-degenerate variety of dimension $n$ and let
$$\Gamma_h(X)\subset X\times \dots \times X\times\G(h-1,N)$$
where $h\leq N$, be the closure of the graph of the rational map $\alpha: X\times  \dots \times X \dasharrow \G(h-1,N)$ taking $h$ general points to their linear span $\langle x_1, \dots , x_{h}\rangle$. Observe that $\Gamma_h(X)$ is irreducible and reduced of dimension $hn$. 

Let $\pi_2:\Gamma_h(X)\rightarrow\G(h-1,N)$ be the natural projection, and $\mathcal{S}_h(X):=\pi_2(\Gamma_h(X))\subset\G(h-1,N)$. Again $\mathcal{S}_h(X)$ is irreducible and reduced of dimension $hn$. Finally, consider
$$\mathcal{I}_h=\{(x,\Lambda) \: | \: x\in \Lambda\} \subset\p^N\times\G(h-1,N)$$
with natural projections $\pi_h$ and $\psi_h$ onto the factors. 

The {\it abstract $h$-secant variety} is the irreducible variety $\Sec_{h}(X):=(\psi_h)^{-1}(\mathcal{S}_h(X))\subset \mathcal{I}_h$. The {\it $h$-secant variety} is $\mathbb{S}ec_{h}(X):=\pi_h(Sec_{h}(X))\subset\p^N$. Then $\Sec_{h}(X)$ is an $(hn+h-1)$-dimensional variety.

The number $\delta_h(X) = \min\{hn+h-1,N\}-\dim\mathbb{S}ec_{h}(X)$ is called the \textit{$h$-secant defect} of $X$. We say that $X$ is \textit{$h$-defective} if $\delta_{h}(X) > 0$. We refer to \cite{Ru03} for a comprehensive survey on the subject.

Determining secant defectivity is a classical problem in algebraic geometry. A new strategy to determine the non secant defectivity was introduced in \cite[Theorem 5.3]{MR19}, the method is based on degenerating the span of several tangent spaces $T_{x_i}X$ in a single osculating space $T_x^sX$.

To state the criterion for non secant defectivity in \cite{MR19} we introduce a function $h_m:\mathbb{N}_{\geq0}\longrightarrow\mathbb{N}_{\geq0}$ counting how many tangent spaces can be degenerated into a higher order osculating space.

\begin{Definition}\label{h_m}
Given an integer $m\geq 0$ we define a function
$$h_m:\mathbb{N}_{\geq0}\longrightarrow\mathbb{N}_{\geq0}$$
as follows: for $h_m(0)=0$ and for any $k>0$ write
$$k+1=2^{\lambda_1}+2^{\lambda_2}+\cdots+2^{\lambda_l}+\varepsilon$$
where $\lambda_1>\lambda_2>\cdots>\lambda_l\geq1$ and $\varepsilon\in \{0,1\}$, then
$$h_m(k)=m^{\lambda_1-1}+m^{\lambda_2-1}+\cdots+m^{\lambda_l-1}$$
\end{Definition}

\begin{thm}\label{Theorem 5.3}\cite[Theorem 5.3]{MR19}
Let $X\subset \p^N$ be a projective variety having $m$-osculating regularity and strong $2$-osculating regularity. Let $s_1,\ldots,s_l\geq1$  integers such that the general osculating projection $\Pi_{p_1,\ldots,p_l}^{s_1,\ldots,s_l}$ is generically finite. If 
$$h\leq\displaystyle\sum_{j=1}^lh_m(s_j)$$
then $X$ is not $(h+1)$-defective.
\end{thm}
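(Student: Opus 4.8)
The plan is to deduce non-$(h+1)$-defectivity from the generic finiteness of a tangential projection, and to obtain the latter by degenerating the span of $h$ general tangent spaces into the center of the osculating projection $\Pi^{s_1,\ldots,s_l}_{p_1,\ldots,p_l}$. Recall from the introduction that, by \cite[Proposition 3.5]{CC02}, it suffices to prove that the general tangential projection $\tau_{X,h}$, with center $\langle T_{x_1}X,\ldots,T_{x_h}X\rangle$, is generically finite. By hypothesis the osculating projection with center $W:=\langle T^{s_1}_{p_1}X,\ldots,T^{s_l}_{p_l}X\rangle$ is generically finite, so the whole point is to connect these two projections through a flat degeneration.

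The reduction step is a semicontinuity argument. Suppose we can move the $h$ general points along curves $x_i(t)$ so that the flat limit $V_0$ in the Grassmannian of the family of spans $V_t=\langle T_{x_1(t)}X,\ldots,T_{x_h(t)}X\rangle$ satisfies $V_0\subseteq W$. Since $V_0\subseteq W$, the projection $\Pi_W|_X$ factors as $\sigma\circ\Pi_{V_0}|_X$ for a further linear projection $\sigma$; hence every fibre of $\Pi_{V_0}|_X$ is contained in a fibre of $\Pi_W|_X$, and generic finiteness of $\Pi_W|_X$ forces generic finiteness of $\Pi_{V_0}|_X$. As the dimension of the general fibre is upper semicontinuous in the flat family $\{V_t\}$, generic finiteness at the special member $V_0$ propagates to the general member $V_t$, that is, $\tau_{X,h}$ is generically finite, which is what we want.

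It remains to produce the degeneration $V_0\subseteq W$, and this is where the two regularity hypotheses and the counting function $h_m$ of Definition \ref{h_m} enter. The key assertion is that $h_m(s)$ general tangent spaces can be flatly degenerated into a single osculating space $T^s_pX$; granting this, we split the $h\leq\sum_{j=1}^l h_m(s_j)$ points into $l$ clusters of sizes $h_m(s_1),\ldots,h_m(s_l)$, degenerate the $j$-th cluster into $T^{s_j}_{p_j}X$, and recover $V_0\subseteq W$. The assertion is proved by induction driven by the binary expansion $s+1=2^{\lambda_1}+\cdots+2^{\lambda_l}+\varepsilon$ of Definition \ref{h_m}, using two elementary moves. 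Move A, provided by $m$-osculating regularity (Definition \ref{osc_reg}), degenerates $m$ copies of $T^{s'}$ into $T^{2s'+1}$; iterating it from a single tangent space builds the ``pure'' osculating space $T^{2^\lambda-1}$ out of $m^{\lambda-1}$ tangent spaces. Move B, provided by strong $2$-osculating regularity, degenerates $\langle T^{s_1}_p,T^{s_2}_q\rangle$ into $T^{s_1+s_2+1}_p$, hence merges the pieces accounting for $h_m(s_1)$ and $h_m(s_2)$ tangent spaces into one accounting for $h_m(s_1)+h_m(s_2)$. Assembling the pure pieces $T^{2^{\lambda_i}-1}$ by Move B produces $T^{\,\sum_i 2^{\lambda_i}-1}$, which equals $T^s$ when $\varepsilon=0$ and is contained in $T^s$ (with the same count, since $h_m(s)=h_m(s-1)$ in this case) when $\varepsilon=1$; the total tangent-space count is $\sum_i m^{\lambda_i-1}=h_m(s)$, exactly matching the definition.

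The main obstacle is making the iterated degenerations rigorous: the construction of a single $T^s$ nests several flat limits — doublings inside merges, and clusters centered at distinct points — and one must verify that the flat limit of the full family of spans genuinely lands inside the target osculating space rather than merely inside some larger linear subspace. Controlling this amounts to setting up the multi-parameter families explicitly and tracking the limit in the Grassmannian coordinate by coordinate, through hyperplane constructions of the type carried out in Propositions \ref{Strong_2_Osc_Reg} and \ref{Osc_Reg}; this bookkeeping, which guarantees that the containments of flat limits compose correctly, is the technical heart of the argument.
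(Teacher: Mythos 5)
Your proposal is correct and follows essentially the same strategy as the actual proof: the paper does not prove this statement but imports it from \cite[Theorem 5.3]{MR19}, whose argument is precisely the one you reconstruct --- reduction to generic finiteness of the tangential projection via \cite[Proposition 3.5]{CC02}, degeneration of tangent spans into $\left\langle T^{s_1}_{p_1}X,\dots,T^{s_l}_{p_l}X\right\rangle$ built from the two moves (doubling $m$ copies of $T^{s}$ into $T^{2s+1}$ via $m$-osculating regularity, merging $T^{s_1},T^{s_2}$ into $T^{s_1+s_2+1}$ via strong $2$-osculating regularity) governed by the binary expansion in Definition \ref{h_m}, and upper semicontinuity of fibre dimension to pass from the flat limit back to the general member. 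Your bookkeeping, including the case $\varepsilon=1$ where $h_m(s)=h_m(s-1)$, matches the cited proof.
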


Now, we are ready to prove our main result on non-defectivity of product of Grassmannians. We follow the notation introduced in the previous sections.

\begin{thm}\label{Bound_Prod_Grass}
Assume that $n\geq 2k_r+1$. Set 
$$\alpha:=\left\lfloor\dfrac{n+1}{k_r+1}\right\rfloor$$
and let $h_{\alpha}$ be as in Definition \ref{h_m}. Assume that
\begin{itemize}
\item[-] either $n\geq k_r^2+3k_r+1$ and $h\leq \alpha h_{\alpha}(\sum_{i=1}^r k_i+r-2)$ or
\item[-] $\alpha(k_r+1)-1<n<k_r^2+3k_r+1$ and $h\leq (\alpha-1) h_{\alpha}(\sum_{i=1}^r k_i+r-2)+h_{\alpha}(s')$
\end{itemize}
where $s'=\sum_{i=1}^r s_i-2$ with $s_i'=\min\{k_i+1,n-\alpha(k_i+1)\}$ for $i\neq r$ and $s_r'=\min\{k_r,n-\alpha k_r-1\}$. Then $\prod_{i=1}^r\G(k_i,n)$ is not $(h+1)$-defective. 
\end{thm}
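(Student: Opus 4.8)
The plan is to invoke Theorem \ref{Theorem 5.3} applied to the variety $X=\prod_{i=1}^r\G(k_i,n)$, so the proof reduces to verifying that all hypotheses of that criterion are in place and then feeding it the correct osculating projections. The three ingredients we need are: strong $2$-osculating regularity, $m$-osculating regularity for the appropriate $m$, and a collection of osculating projections that are generically finite. First I would note that Proposition \ref{Strong_2_Osc_Reg} gives strong $2$-osculating regularity of $\prod_{i=1}^r\G(k_i,n)$ unconditionally, and Proposition \ref{Osc_Reg} gives $\alpha$-osculating regularity with $\alpha=\lfloor\frac{n+1}{k_r+1}\rfloor$. Thus we may apply Theorem \ref{Theorem 5.3} with $m=\alpha$.

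Next I would supply the generically finite osculating projections, which is exactly the content of the two propositions on birationality. In the case $n\geq k_r^2+3k_r+1$, Proposition \ref{Osc_Proj_Product} shows that $\Pi_{T_{e_{I_1},\ldots,e_{I_\alpha}}^{s,\ldots,s}}$ with $s=r-2+\sum_{i=1}^r k_i$ is birational, hence generically finite. This uses $\alpha$ osculating spaces each of order $s$, so the bound from Theorem \ref{Theorem 5.3} reads $h\leq\sum_{j=1}^\alpha h_\alpha(s)=\alpha\, h_\alpha\!\left(\sum_{i=1}^r k_i+r-2\right)$, matching the first case of the statement. In the intermediate range $\alpha(k_r+1)-1<n<k_r^2+3k_r+1$, the same Proposition \ref{Osc_Proj_Product} shows that $\Pi_{T_{e_{I_1},\ldots,e_{I_{\alpha-1}},e_{I_\alpha}}^{s,\ldots,s,s'-1}}$ is birational, where the first $\alpha-1$ osculating spaces have order $s$ and the last has order $s'-1$ (with $s'=\sum_{i=1}^r s_i'-1$ as defined there). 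Feeding these $l=\alpha$ orders $s_1=\cdots=s_{\alpha-1}=s$ and $s_\alpha=s'-1$ into Theorem \ref{Theorem 5.3} yields $h\leq(\alpha-1)h_\alpha\!\left(\sum_{i=1}^r k_i+r-2\right)+h_\alpha(s'-1)$, which is the second case of the statement.

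The main bookkeeping obstacle is not a new geometric idea but the careful matching of indices and the translation from the birationality results into the numerical bound. I would need to confirm that the quantity $s$ appearing as the common osculating order equals $r-2+\sum_{i=1}^r k_i$, so that $h_\alpha(s)=h_\alpha\!\left(\sum_{i=1}^r k_i+r-2\right)$, and that the arguments of $h_\alpha$ in the statement are consistent with the orders of the osculating spaces we actually project from. A mild discrepancy to watch for is the value of $s'$: the statement writes $s'=\sum_{i=1}^r s_i-2$ while Proposition \ref{Osc_Proj_Product} defines $s'=\sum_{i=1}^r s_i'-1$, so I would track whether the relevant input to $h_\alpha$ is $s'$ or $s'-1$, and reconcile the off-by-one by reading the orders of the osculating projections exactly as stated in Proposition \ref{Osc_Proj_Product}.

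With the regularity hypotheses furnished by Propositions \ref{Strong_2_Osc_Reg} and \ref{Osc_Reg} and the generic finiteness of the osculating projections furnished by Proposition \ref{Osc_Proj_Product}, Theorem \ref{Theorem 5.3} applies directly and both numerical bounds follow by summing the contributions $h_\alpha(s_j)$ over the $\alpha$ osculating spaces used in each case. Hence $\prod_{i=1}^r\G(k_i,n)$ is not $(h+1)$-defective in the stated ranges.
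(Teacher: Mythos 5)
Your proposal is correct and follows exactly the paper's own argument: combine strong $2$-osculating regularity (Proposition \ref{Strong_2_Osc_Reg}), $\alpha$-osculating regularity (Proposition \ref{Osc_Reg}), and the birational osculating projections of Proposition \ref{Osc_Proj_Product} as input to Theorem \ref{Theorem 5.3}, summing the contributions $h_\alpha(s_j)$ in each of the two numerical regimes. Your reconciliation of the off-by-one between the theorem's $s'=\sum_{i=1}^r s_i'-2$ and the proposition's $s'=\sum_{i=1}^r s_i'-1$ (so that the relevant osculating order is the proposition's $s'-1$) is exactly right, and in fact makes explicit a detail the paper's proof leaves implicit in its ``follows immediately.''
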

\begin{proof}
We have shown in Propositions \ref{Osc_Reg}, \ref{Strong_2_Osc_Reg} that $\prod_{i=1}^r\G(k_i,n)$ has respectively $\alpha$-osculating regularity for $\alpha:=\left\lfloor\frac{n+1}{k_r+1}\right\rfloor$, and strong $2$-osculating regularity. The statement then follows immediately from Proposition \ref{Osc_Proj_Product} and Theorem \ref{Theorem 5.3}.
\end{proof}

\begin{Corollary}\label{main_cor}
The variety $\prod_{i=1}^r\G(k_i,n)$ is not $(h+1)$-defective for $h\leq \left(\dfrac{n+1}{k_r+1}\right)^{\lfloor \log_2(\sum k_j+r-1)\rfloor}$.
\end{Corollary}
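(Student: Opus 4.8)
The plan is to obtain the Corollary as a direct numerical consequence of Theorem \ref{Bound_Prod_Grass}. Recall that $\alpha = \lfloor \frac{n+1}{k_r+1}\rfloor$ and that the criterion behind Theorem \ref{Bound_Prod_Grass} (via Theorem \ref{Theorem 5.3}) establishes non-$(h+1)$-defectivity for \emph{every} integer $h$ up to the stated bound. Hence it suffices to check that the cleaner quantity $\left(\frac{n+1}{k_r+1}\right)^{\lfloor \log_2(\sum_{i=1}^r k_i + r - 1)\rfloor}$ does not exceed the admissible range produced by the two cases of Theorem \ref{Bound_Prod_Grass}. The whole argument thus reduces to a lower estimate for the arithmetic function $h_\alpha$.

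The key step is to bound $h_m$ from below by keeping only its leading binary digit. For $k\geq 1$, writing $k+1 = 2^{\lambda_1}+2^{\lambda_2}+\cdots+2^{\lambda_l}+\varepsilon$ as in Definition \ref{h_m}, the top exponent is $\lambda_1 = \lfloor \log_2(k+1)\rfloor$, so discarding the remaining nonnegative summands gives
$$
h_m(k)\;\geq\; m^{\lambda_1-1}\;=\;m^{\lfloor \log_2(k+1)\rfloor-1}.
$$
Applying this with $m=\alpha$ and $k=\sum_{i=1}^r k_i + r - 2$, so that $k+1 = \sum_{i=1}^r k_i + r - 1$ and $\lfloor\log_2(k+1)\rfloor = \lfloor\log_2(\sum_{i=1}^r k_i + r - 1)\rfloor =: e$, we obtain
$$
h_\alpha\!\left(\sum_{i=1}^r k_i + r - 2\right)\;\geq\;\alpha^{\,e-1}.
$$

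In the first regime of Theorem \ref{Bound_Prod_Grass}, i.e. $n\geq k_r^2+3k_r+1$, the admissible bound is $\alpha\, h_\alpha(\sum_{i=1}^r k_i + r - 2)$, and multiplying the previous inequality by $\alpha$ yields $\alpha\, h_\alpha(\sum_{i=1}^r k_i + r - 2)\geq \alpha^{\,e}$. Since $\alpha=\lfloor\frac{n+1}{k_r+1}\rfloor$, this is precisely the displayed bound $\left(\frac{n+1}{k_r+1}\right)^{e}$, so every $h$ below it lies in the admissible range and the conclusion follows from Theorem \ref{Bound_Prod_Grass}.

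The main obstacle is the intermediate regime $\alpha(k_r+1)-1<n<k_r^2+3k_r+1$, where Theorem \ref{Bound_Prod_Grass} only supplies $(\alpha-1)\,h_\alpha(\sum_{i=1}^r k_i + r - 2)+h_\alpha(s')$. Here the final osculating order $s'=\sum_{i=1}^r s_i'-2$, with $s_i'=\min\{k_i+1,\,n-\alpha(k_i+1)\}$ for $i\neq r$ and $s_r'=\min\{k_r,\,n-\alpha k_r-1\}$, is in general strictly smaller than $\sum_{i=1}^r k_i + r - 2$, so one cannot merely factor out $\alpha$. To recover the exponent $e$ one must show $h_\alpha(s')\geq \alpha^{\,e-1}$, i.e. that $s'+1$ still carries the top binary digit $2^{\lfloor\log_2(\sum_{i=1}^r k_i + r - 1)\rfloor}$; I expect this to force exploiting the defining minima for the $s_i'$ together with the regime hypothesis $n<k_r^2+3k_r+1$. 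This comparison is the delicate bookkeeping point, after which the estimate $(\alpha-1)\alpha^{\,e-1}+\alpha^{\,e-1}=\alpha^{\,e}$ closes the argument exactly as in the first regime.
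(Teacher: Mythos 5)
Your proposal has two genuine gaps, one in each regime. In the regime $n\geq k_r^2+3k_r+1$ your arithmetic matches the paper's up to the last step, but that last step is an overclaim: from $h_\alpha\left(\sum_{i=1}^r k_i+r-2\right)\geq\alpha^{e-1}$ you correctly get $\alpha\, h_\alpha(\cdot)\geq\alpha^{e}$, yet $\alpha^{e}$ is \emph{not} ``precisely the displayed bound'' $\left(\frac{n+1}{k_r+1}\right)^{e}$. Since $\alpha=\left\lfloor\frac{n+1}{k_r+1}\right\rfloor\leq\frac{n+1}{k_r+1}$, the inequality $\alpha^{e}\leq\left(\frac{n+1}{k_r+1}\right)^{e}$ runs the wrong way: integers $h$ with $\alpha^{e}<h\leq\left(\frac{n+1}{k_r+1}\right)^{e}$ are not reached by Theorem \ref{Bound_Prod_Grass} through your estimate. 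Concretely, for $r=2$, $(k_1,k_2)=(1,2)$, $n=19$ one has $e=2$, $\alpha=6$, $h_6(3)=6$, so the theorem covers $h\leq 36$ while the corollary's bound allows $h\leq\lfloor(20/3)^2\rfloor=44$. The paper performs the same reduction (binary expansion of $\sum k_j+r-1$, leading term $\alpha^{\lambda_1-1}$) but is careful never to assert your equality: its closing sentence compares $h_\alpha(\sum k_j+r-2)$ with $\alpha^{\lfloor\log_2(\sum k_j+r-1)\rfloor-1}$ only ``asymptotically,'' which is exactly the slack you have silently converted into an identity. There is no exact deduction available when $k_r+1$ does not divide $n+1$.

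In the intermediate regime $\alpha(k_r+1)-1<n<k_r^2+3k_r+1$ you honestly leave the argument open, but the repair you sketch is provably impossible, not merely delicate bookkeeping. Your plan requires $h_\alpha(s')\geq\alpha^{e-1}$, i.e.\ that $s'+1$ retain the top binary digit $2^{e}$; however the minima defining the $s_i'$ collapse near the lower end of the regime. Take $r=2$, $(k_1,k_2)=(3,4)$, $n=20$: then $\alpha=4$ and $19<20<29$, so the regime hypothesis holds, while $s_1'=\min\{4,\,20-16\}=4$ and $s_r'=\min\{4,\,20-16-1\}=3$, giving $s'\in\{5,6\}$ (depending on which of the paper's two off-by-one conventions for $s'$ one uses) and $h_4(5)=h_4(6)=5$, far below $\alpha^{e-1}=4^{2}=16$ (here $e=\lfloor\log_2 8\rfloor=3$). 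So the inequality you hoped to extract from $n<k_r^2+3k_r+1$ is simply false. The paper's proof handles this regime differently and more crudely: it \emph{discards} the summand $h_\alpha(s')$ altogether, keeps only $(\alpha-1)\,h_\alpha\left(\sum k_j+r-2\right)\geq(\alpha-1)\alpha^{e-1}$, and again closes the gap to $\left(\frac{n+1}{k_r+1}\right)^{e}$ only in the asymptotic sense above. As written, your proposal therefore establishes the corollary in neither regime in the exact form stated; the rigorous content actually extractable from Theorem \ref{Bound_Prod_Grass} by your (and the paper's) method is $h\leq\alpha^{e}$ in the first regime and $h\leq(\alpha-1)\alpha^{e-1}$ in the second.
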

\begin{proof}
We may write
\stepcounter{thm}
\begin{equation}\label{h_m(s)}
\sum_{i=1}^r k_i+r-1=2^{\lambda_1}+2^{\lambda_2}+\cdots+2^{\lambda_l}+\varepsilon
\end{equation}
with $\lambda_1>\lambda_2>\cdots>\lambda_l\geq1$ and $\varepsilon\in \{0,1\}$. Then $h_{\alpha}(\sum_{i=1}^r k_i+r-2)=\alpha^{\lambda_1-1}+\alpha^{\lambda_2-1}+\cdots+\alpha^{\lambda_l-1}$.

The first bound in Theorem \ref{Bound_Prod_Grass} gives $h\leq \alpha^{\lambda_1}+\cdots+\alpha^{\lambda_l}$. Furthermore, considering just the first summand in the second bound in Theorem \ref{Bound_Prod_Grass} we get that $\prod_{i=1}^r\G(k_i,n)$ is not $(h+1)$-defective for $h\leq (\alpha-1)(\alpha^{\lambda_1-1}+\alpha^{\lambda_2-1}+\cdots+\alpha^{\lambda_l-1})$.

Finally, from (\ref{h_m(s)}) we get that $\lambda_1=\lfloor \log_2(r-1+\sum k_i)\rfloor$. Hence, asymptotically we have $h_{\alpha}(\sum k_j+r-2)\sim\alpha^{\lfloor \log_2(r-1+\sum k_i)\rfloor-1}$, and by Theorem \ref{Bound_Prod_Grass} if $h\leq  \left(\dfrac{n+1}{k_r+1}\right)^{\lfloor \log_2(\sum k_j+r-1)\rfloor}$ then the variety $\prod_{i=1}^r\G(k_i,n)$ is not $(h+1)$-defective.
\end{proof}

\section{On secant defectivity of flag varieties}\label{sec3}
Our goal is to compute the higher osculating spaces of $\F(k_1,\ldots,k_r;n)$. In order to do this, we will use the following notion introduced in \cite[Definition 3.2]{FMR18}.

\begin{Definition}\label{Wellbehaved}
Let $X\subset\mathbb{P}^N$ be an irreducible variety and $Y=\mathbb{P}^k\cap X$ be a linear section of $X$. We say that $Y$ is \textit{osculating well-behaved} if for each smooth point $p\in Y$ we have 
$$T_p^sY=\mathbb{P}^k\cap T_p^sX$$ 
for every $s\geq 0$. 
\end{Definition}

Let us denote by $M_i$ the following $(k_i+1)\times(n+1)$ matrix
$$
M_i=\left[\begin{array}{cccc}
\textit{I}_{k_1+1}& \dots & \dots &(x_{l,m}^1)_{\substack{0\leq l\leq k_1\\k_1+1\leq m\leq n}}\\
0&\textit{I}_{k_2-k_1}& \dots &(x_{l,m}^2)_{\substack{k_1+1\leq l\leq k_2\\k_2+1\leq m\leq n}}\\
\vdots& \ddots &\ddots& \vdots\\
0&\cdots&I_{k_i-k_{i-1}}&(x_{l,m}^i)_{\substack{k_{i-1}+1\leq l\leq k_i\\k_i+1\leq m\leq n}}\\
\end{array}\right]
$$
and consider the map
$$
\begin{array}{cccl}
\varphi':&\prod_{i=1}^r\C^{(k_1+1)(n-k_1)+\sum_{j=2}^i(n-k_j)(k_j-k_{j-1})}&\longrightarrow&\p^N\\
&(M_1,\ldots,M_r)&\longmapsto&\left(\prod_{i=1}^r\det(M_{J^i})\right)_{J=\{J^1,\ldots,J^r\}\in \Lambda}
\end{array}
$$
where $M_{J^i}$ is the submatrix obtained from $M_i$ by considering only the columns indexed by $J^i$. 

For each $2\leq i\leq r$ and $m\leq k_l$, let us take $x^i_{l,m}=0$ in $M_i$. Then $\varphi'$ becomes the parametrization $\varphi$ of $\prod_{i=1}^r\displaystyle\G(k_i,n)$ in (\ref{Param_Product}).

Now, set $x_{l,m}^i=x_{l,m}^r$ in $M_i$ for each $i=1,\ldots,r-1$ and $1\leq  l<m\leq n$. Hence $\varphi$ becomes the parametrization of $\F(k_1,\ldots,k_r;n)$ given by 
$$\begin{array}{cccl}
\overline{\varphi}:&\C^{(k_1+1)(n-k_1)+\sum_{j=2}^r(n-k_j)(k_j-k_{j-1})}&\longrightarrow&\mathbb{P}(\Gamma_a)\subset\P^N\\
&M_r&\longmapsto&\varphi\left(\overline{M}_1,\ldots,\overline{M}_r\right)
\end{array}$$
where $\overline{M}_i$ is the submatrix obtained from $M_r$ by considering only the first $k_i+1$ rows.

\begin{Lemma}
Let $T_{\varphi'}^{s}\left(\prod_{i=1}^r\G(k_i,n)\right):=\left\langle\dfrac{\partial^{|I|}\varphi'}{\partial x_{|I|}}(0)\:|\:|I|\leq s\right\rangle$ be the $s$-osculating space of $\prod_{i=1}^r\G(k_i,n)$ with respect to $\varphi'$. Then $T_{\varphi'}^{s}\left(\prod_{i=1}^r\G(k_i,n)\right)=T^{s}\left(\prod_{i=1}^r\G(k_i,n)\right)$ for every $s\leq r+\sum_{i=1}^rk_{i}$. In particular, 
$$
\frac{\partial^{s}\varphi'}{\partial x_{|I|}}(0)=\frac{\partial^{|J|}\varphi}{\partial x_{|J|}}(0)
$$
for some $J$ with $|J|\leq|I|$.
\end{Lemma}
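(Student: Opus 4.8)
The plan is to exploit the fact that $\varphi'$ and the product chart $\varphi$ of \eqref{Param_Product} have exactly the same shape, namely $J\mapsto\prod_{i=1}^r\det(M_{J^i})$, and differ only in the matrices feeding the determinants. Passing from $\varphi$ to $\varphi'$ amounts to switching on the extra entries $x^i_{l,m}$ with $m\le k_i$ that sit above the staircase of identity blocks of each $M_i$. As recorded just before the statement, annihilating all of these extra entries turns $\varphi'$ back into $\varphi$; equivalently, writing $y$ for the extra variables and $x$ for the variables already present in $\varphi$, we have $\varphi'(x,0)=\varphi(x)$. I would phrase the whole argument around this single relation together with Proposition \ref{Osc_Product}, which gives $T^s\left(\prod_{i=1}^r\G(k_i,n)\right)=\langle e_J : d(I_1,J)\le s\rangle$.

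I would first establish $T^{s}\left(\prod_{i=1}^r\G(k_i,n)\right)\subseteq T^{s}_{\varphi'}\left(\prod_{i=1}^r\G(k_i,n)\right)$. Since the extra variables $y$ do not occur in $\varphi(x)=\varphi'(x,0)$, differentiation in the $x$-directions commutes with the substitution $y=0$, so for every multi-index $I$ supported on the $x$-variables one has $\frac{\partial^{|I|}\varphi}{\partial x_I}(0)=\frac{\partial^{|I|}\varphi'}{\partial x_I}(0)$. By Proposition \ref{Osc_Product} the vectors on the left, as $|I|$ ranges over $|I|\le s$, already span $\langle e_J : d(I_1,J)\le s\rangle=T^{s}$; since they are in particular derivatives of $\varphi'$, the inclusion follows.

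For the reverse inclusion, and simultaneously for the ``in particular'' clause, I would compute $\frac{\partial^{s}\varphi'}{\partial x_I}(0)$ directly. Exactly as in Proposition \ref{Osc_Product}, every variable, including the extra ones, occurs with degree at most one in each $\det(M_{J^i})$, so the derivative distributes over the factors by the Leibniz rule; after evaluation at the origin, where each $M_i$ degenerates to its staircase-identity matrix, only one monomial in the cofactor expansion of each coordinate survives. A bookkeeping of the surviving minors then shows that $\frac{\partial^{s}\varphi'}{\partial x_I}(0)$ is either $0$ or a single signed basis vector $\pm e_J$ with $J\in\Lambda$ determined by $I$. The key point is the distance estimate $d(I_1,J)\le s$: a derivative in a genuine $\varphi$-direction $x^i_{l,m}$ with $m>k_i$ moves a column out of the pivot block and raises $d(I_1,\cdot)$ by one, exactly as before, whereas a derivative in an extra direction $x^i_{l,m}$ with $m\le k_i$ merely relocates a column inside the pivot block and contributes nothing to the distance. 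Hence $d(I_1,J)\le|I|$, with equality precisely when no extra direction is used, and in every nonzero case $e_J=\frac{\partial^{|J|}\varphi}{\partial x_{J}}(0)$ for a multi-index of size $|J|=d(I_1,J)\le|I|$ supported on the $\varphi$-variables. This yields both the stated identity and the inclusion $T^{s}_{\varphi'}\subseteq\langle e_J : d(I_1,J)\le s\rangle=T^{s}$.

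The main obstacle is the combinatorial heart of the third paragraph: verifying that a differentiation in the extra, pivot-column directions of the staircase matrices $M_i$ produces, after evaluation at $0$, either zero or a single signed basis vector, and that such a differentiation never increases $d(I_1,\cdot)$. This is precisely where the echelon shape of the $M_i$ must be used, generalizing the cofactor analysis of Proposition \ref{Osc_Product} to matrices whose pivot blocks carry additional entries; everything else is a formal consequence of the relation $\varphi'(x,0)=\varphi(x)$ together with Proposition \ref{Osc_Product}.
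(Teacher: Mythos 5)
Your proposal is correct and takes essentially the same approach as the paper: both proofs generalize the transversal/cofactor analysis of Proposition \ref{Osc_Product} to the staircase matrices $M_i$, using that a square-free monomial survives in at most one $\det(M_{J^i})$ at the origin and that differentiating in the extra directions $x^i_{l,m}$ with $m\le k_i$ does not increase the distance $d(I_1,\cdot)$. The only difference is organizational: the paper lowers the order inductively, trading the extra variables for genuine ones via the permutation $\sigma_{J^i}$ until only a $\varphi$-derivative remains, whereas you evaluate each derivative of $\varphi'$ at $0$ in closed form as $0$ or $\pm e_J$ with $d(I_1,J)\le|I|$ (with equality exactly when no extra direction is used) --- the same combinatorial content, packaged as a direct computation rather than an order-lowering reduction.
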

\begin{proof}
First, note that if for any $x_{l,m}^i\in x_{|I|}$ we have $m>k_i$, then $\frac{\partial^{s}\varphi'}{\partial x_{|I|}}(0)=\frac{\partial^{|I|}\varphi}{\partial x_{|I|}}(0)$ and we are done.

Now, let $2\leq i\leq r$ and consider a derivative $\frac{\partial^{|I|}\varphi'}{\partial x_{|I|}}(0)$ 
such that $x_{l,m}^{i}\in x_{|I|}$ with $m\leq k_{i}$. Therefore, to prove the statement it is enough to show that this partial derivative can be written in terms of another partial derivative $\dfrac{\partial^{|J|}\varphi'}{\partial x_{|J|}}(0)$ with $x_{l,m}^{i}\notin x_{|J|}$, $m\leq k_{i}$ and $|J|<|I|$.

Fix $2\leq i\leq r$ and let $x_{l_{1},m_{1}}^{i},...,x_{l_{h},m_{h}}^{i},x_{l_{h+1},m_{h+1}}^{i},...,x_{l_{b},m_{b}}^{i}\in x_{|I|}$ such that $m_{a}\leq k_{i}$ for every $a=1,...,h$ and $b\leq k_{i}+1$.

If $\frac{\partial^{b}\varphi'}{\partial x_{l_{1},m_{1}}^{i}\cdots\partial x_{l_{h},m_{h}}^{i}\partial x_{l_{h+1},m_{h+1}}^{i}\cdots\partial x_{l_{b},m_{b}}^{i}}(0)\neq 0$ consider the minor $M_{J^i}$ of $M_i$ such that the monomial 
$$x_{l_{1},m_{1}}^{i}\dots x_{l_{h},m_{h}}^{i} x_{l_{h+1},m_{h+1}}^{i}\dots x_{l_{b},m_{b}}^{i}$$
appears in the expression of $\det(M_{J^i})$. Then, there exist variables $x_{\sigma_{J^i}(l_{h+1}),\sigma_{J^i}(m_{h+1})}^{i},...,x_{\sigma_{J^i}(l_{b}),\sigma_{J^i}(m_{b})}^{i}$ such that $x_{\sigma_{J^i}(l_{h+1}),\sigma_{J^i}(m_{h+1})}^{i}\cdots x_{\sigma_{J^i}(l_{b}),\sigma_{J^i}(m_{b})}^{i}$ is also a monomial in $\det(M_{J})$, where $\sigma_{J^i}$ is a permutation on the indexes such that $\sigma_{J^i}(m_a)>k_{i}$ for all $h+1\leq a\leq b$.

This shows that

$$\frac{\partial^{m}\varphi'}{\partial x_{l_{1},m_{1}}^{i}\cdots\partial x_{l_{h},m_{h}}^{i}\partial x_{l_{h+1},m_{h+1}}^{i}\cdots\partial x_{l_{b},m_{b}}^{i}}(0)=\frac{\partial^{m}\varphi'}{\partial x_{\sigma_{J^i}(l_{h+1}),\sigma_{J^i}(m_{h+1})}^{i},...,\partial x_{\sigma_{J^i}(l_{b}),\sigma_{J^i}(m_{b})}^{i}}(0)$$

We have thus decreased the number of variables with respect we differentiate
and thus lowered the order of the derivatives. Finally, since $\frac{\partial\varphi}{\partial x_{l,m}^{i}}(0)=\frac{\partial\varphi'}{\partial x_{l,m}^{i}}(0)$ for $m>k_{i}$ we are done.
\end{proof}

\begin{Lemma}\label{Prop2}
Since $\overline{\varphi}$ is a sub-parametrization of $\varphi'$ by the chain rule we have
$$
\dfrac{\partial^{s}\overline{\varphi}}{\partial x_{|I|}}(0)=\underset{|K|}{\sum}\dfrac{\partial^{s}\varphi'}{\partial x_{|K|}}(0){=}\underset{|J|}{\sum}\dfrac{\partial^{s}\varphi}{\partial x_{|J|}}(0)
$$
where $|K|=|I|=s$ and $|J|\leq |I|$. Let $\dfrac{\partial^{s}\overline{\varphi}}{\partial x_{|I|}}(0)\neq0$ with $|I|=s$ such that for each $x_{l,m}^i\in x_{|I|}$ we have that $m>k_i$. Then, in the above decomposition there is at least a vector $\dfrac{\partial^{s}\varphi}{\partial x_{|J|}}(0)$ with $|J|=s$.
\end{Lemma}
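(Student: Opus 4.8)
The plan is to regard $\overline{\varphi}$ as the composition $\overline{\varphi}=\varphi'\circ\iota$, where $\iota$ is the linear substitution that identifies, for each free entry $x_{l,m}$ of $M_r$, all the entries $x^i_{l,m}$ of the matrices $M_i$ whose row block contains the row $l$ (equivalently, the $M_i$ with $k_i\geq l$). Because $\iota$ is \emph{linear}, the multivariate chain rule collapses to the Leibniz identity $\partial/\partial x_{l,m}=\sum_i\partial/\partial x^i_{l,m}$, the sum running over the indices $i$ for which $x^i_{l,m}$ is still a variable of $M_i$. Iterating this over the $s$ variables recorded in $x_{|I|}$ yields the first equality: $\frac{\partial^s\overline{\varphi}}{\partial x_{|I|}}(0)$ is the sum, over all \emph{liftings} $K$ of the multiset $I$ to variables of the factors $M_i$, of the derivatives $\frac{\partial^s\varphi'}{\partial x_{|K|}}(0)$, each with $|K|=|I|=s$. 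Feeding each summand into the previous Lemma rewrites it as $\frac{\partial^{|J|}\varphi}{\partial x_{|J|}}(0)$ with $|J|\leq|K|=s$, which is the second equality.

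For the last assertion I would single out the distinguished lifting $K_0$ that sends every variable $x_{l,m}$ to the matrix $M_i$ of \emph{smallest} index containing the row $l$, i.e.\ to the flag level $i$ to which $l$ belongs. For that matrix one has $m>k_i$ by the standing hypothesis on $x_{|I|}$, so $x^i_{l,m}$ sits in the free Grassmannian block of $M_i$ and, by the previous Lemma, no order dropping occurs along $K_0$; its contribution is therefore a genuine $s$-th order derivative $\frac{\partial^s\varphi}{\partial x_{|J_0|}}(0)$ with $|J_0|=s$. Hence a full-order term is present in the decomposition.

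The delicate point — and the main obstacle — is to guarantee that this top-order contribution does not vanish after the inevitable cancellations among the many summands. Here I would lean on Proposition \ref{Osc_Product}: each $\frac{\partial^{|J|}\varphi}{\partial x_{|J|}}(0)$ equals, up to sign, a single basis vector $e_{J'}$, and the order $|J|$ of the derivative equals the distance $d(I_1,J')$ of $e_{J'}$ from the base point $e_{I_1}$. Thus the summands coming from full-order liftings lie at distance exactly $s$ from $I_1$, while every order-dropping lifting lies at distance $<s$; a top-order basis vector can therefore never be cancelled by a lower-order one. Finally, two distinct full-order liftings displace indices in different Grassmannian factors and so produce different $e_{J'}$, ruling out cancellation among the top-order terms themselves; and the nonvanishing hypothesis $\frac{\partial^s\overline{\varphi}}{\partial x_{|I|}}(0)\neq0$ together with $m>k_i$ forces the rows and columns moved inside each factor to be pairwise distinct, so the minors computing $\frac{\partial^s\varphi}{\partial x_{|J_0|}}(0)$ in Proposition \ref{Osc_Product} do not degenerate and $e_{J_0'}\neq0$ survives.
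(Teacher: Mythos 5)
Your chain-rule decomposition and the reduction of each summand via the previous Lemma are fine and agree with what the paper does implicitly. The genuine gap is in your distinguished lifting $K_0$. Since the coordinates of $\overline{\varphi}$ are no longer multilinear, the multiset $x_{|I|}$ may contain the same variable $x_{l,m}$ several times (the hypothesis $m>k_i$ does not forbid this; it only bounds the multiplicity by the number $h(m)$ of matrices in which the entry $(l,m)$ is a Grassmannian-type variable), and it may also contain distinct variables lying in the same row, or in the same column within one block. Your $K_0$ sends \emph{all} occurrences of $x_{l,m}$ to the single matrix of smallest index containing row $l$; but every coordinate of $\varphi'$ has degree at most one in each variable, and a determinant has vanishing second derivative with respect to two entries of the same row or column, so in any of these collision cases the term $\frac{\partial^{s}\varphi'}{\partial x_{|K_0|}}(0)$ is identically zero and witnesses nothing. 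Concretely, for $\F(0,1;2)$ parametrized by the rows $(1,a,b)$ and $(0,1,c)$, the coordinate $Z_{\{2\},\{1,2\}}=b(ac-b)$ gives $\frac{\partial^{2}\overline{\varphi}}{\partial b^{2}}(0)=-2\,e_{\{2\},\{1,2\}}\neq 0$, with $x_{|I|}=(x_{0,2},x_{0,2})$ admissible since $2>k_1,k_2$; your $K_0$ assigns both differentiations to the copy $x^{1}_{0,2}$ in $M_1$ and produces $0$, whereas the full-order term comes from the \emph{mixed} lifting $\partial^{2}/\partial x^{1}_{0,2}\partial x^{2}_{0,2}$, i.e.\ from spreading the two occurrences over the two matrices. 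This distribution of colliding occurrences over the distinct Grassmannian-type copies $x^{1}_{l,m},\dots,x^{h(m)}_{l,m}$ is exactly the content of the paper's proof: it first deduces from the hypotheses that each variable occurs at most $h(m)$ times in $x_{|I|}$, and then observes that the chain-rule expansion contains the summand in which the occurrences go to pairwise distinct copies, all with $m>k_i$, so that by the previous Lemma no order drops and this summand is a genuine order-$s$ derivative of $\varphi$.

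Two smaller points. First, your claim that for $K_0$ ``no order dropping occurs'' conflates two issues: the previous Lemma guarantees no order reduction only when the derivative is taken with respect to \emph{distinct} Grassmannian-type variables, which is precisely what $K_0$ fails to arrange. Second, your final discussion of cancellations (full-order terms are, by Proposition \ref{Osc_Product}, basis vectors at distance $s$ from $e_{I_1}$ and cannot be cancelled by lower-order ones) proves more than the Lemma asserts: the statement only claims that a full-order vector \emph{occurs} in the decomposition, and the non-cancellation argument is what the paper invokes later, in the proof of Proposition \ref{well-behaivior}. So the fix is structural rather than cosmetic: replace $K_0$ by a lifting that distributes repeated and same-row/same-column occurrences among the matrices $M_i$ with $i\leq h(m)$, as in the paper.
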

\begin{proof}
For any $x_{l,m}^{i}\in x_{|I|}$ let $h(m)$ be the maximum index in $\{1,...,r\}$ such that $m>k_{h(m)}$. Since for each $x_{l,m}^i\in x_{|I|}$ we have that $m>k_i$ and $\frac{\partial^{s}\overline{\varphi}}{\partial x_{|I|}}(0)\neq0$, we get that any $x_{l,m}^{i}\in x_{|I|}$ appears at most $h(m)$ times in $x_{|I|}$.

Now, for any $s\leq h(m)$, the chain rule expression of $\frac{\partial^{s}\overline{\varphi}}{(\partial x_{l,m}^{i})^{s}}(0)$ contains the factor
 
$$\dfrac{\partial^{s}\varphi'}{\partial x_{l,m}^{1}\partial x_{l,m}^{2}...\partial x_{l,m}^{h(m)}}(0)=\dfrac{\partial^{s}\varphi}{\partial x_{l,m}^{1}\partial x_{l,m}^{2}...\partial x_{l,m}^{h(m)}}(0)$$
 
Repeating this argument for all indexes $x_{l,m}^{i}\in x_{|I|}$ we conclude.
\end{proof}

\begin{Proposition}\label{well-behaivior}
The flag variety is osculating well-behaved, that is
$$
T_p^s\F(k_1,\ldots,k_r;n)=T_p^s\prod_{i=1}^r\G(k_i,n)\cap \mathbb{P}(\Gamma_a)
$$
for any $p \in \F(k_1,\ldots,k_r;n)$ and non-negative integer $s$.
\end{Proposition}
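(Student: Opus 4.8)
The plan is to read the statement as the osculating well-behaved property of Definition \ref{Wellbehaved} for the linear section $\F(k_1,\ldots,k_r;n)=\mathbb{P}(\Gamma_a)\cap\prod_{i=1}^r\G(k_i,n)$, and to prove the two inclusions
$$T_p^s\F\subseteq T_p^s\Big(\prod_{i=1}^r\G(k_i,n)\Big)\cap\mathbb{P}(\Gamma_a)\qquad\text{and}\qquad T_p^s\Big(\prod_{i=1}^r\G(k_i,n)\Big)\cap\mathbb{P}(\Gamma_a)\subseteq T_p^s\F$$
separately. Since $GL(n+1)$ acts transitively on $\F$ while preserving both $\prod_{i=1}^r\G(k_i,n)$ and the subrepresentation $\Gamma_a$ (hence the linear space $\mathbb{P}(\Gamma_a)$), and carries osculating spaces to osculating spaces, arguing as in Corollary \ref{Dim_Osc_Product} it suffices to treat the coordinate flag $p=e_{I_1}$, for which Proposition \ref{Osc_Product} gives $T_{e_{I_1}}^s\big(\prod_{i=1}^r\G(k_i,n)\big)=\langle e_J\ ;\ d(I_1,J)\le s\rangle$.

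For the first inclusion I would argue that, being spanned by derivatives of the parametrization $\overline{\varphi}$, the space $T_p^s\F$ lies in the linear span $\langle\F\rangle=\mathbb{P}(\Gamma_a)$; moreover Lemma \ref{Prop2} writes every generator $\frac{\partial^{|I|}\overline{\varphi}}{\partial x_{|I|}}(0)$ of $T_p^s\F$ as a linear combination of the vectors $\frac{\partial^{|J|}\varphi}{\partial x_{|J|}}(0)=\pm e_{J'}$ with $d(I_1,J')=|J|\le|I|\le s$, which by Proposition \ref{Osc_Product} are exactly the generators of $T_p^s\big(\prod_{i=1}^r\G(k_i,n)\big)$. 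This gives the easy inclusion.

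The reverse inclusion is the heart of the matter. I would filter both sides by the distance filtration $F^t=\langle e_J\ ;\ d(I_1,J)\le t\rangle=T_p^t\big(\prod_{i=1}^r\G(k_i,n)\big)$ and pass to the associated graded, where the ambient graded piece $F^t/F^{t-1}\cong\langle e_J\ ;\ d(I_1,J)=t\rangle$ is a coordinate space. As $T_p^s\F\subseteq T_p^s\big(\prod_{i=1}^r\G(k_i,n)\big)\cap\mathbb{P}(\Gamma_a)$ respects this filtration, it is enough to match the two associated graded pieces at each level $t\le s$ and induct. On one side the level-$t$ piece of $\mathbb{P}(\Gamma_a)\cap\prod_{i=1}^r\G(k_i,n)$ is the image of $\Gamma_a\cap F^t$ in $F^t/F^{t-1}$; on the other it is spanned by the leading (order-$t$) parts of the derivatives $\frac{\partial^{t}\overline{\varphi}}{\partial x_{|I|}}(0)$ with $|I|=t$. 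The second clause of Lemma \ref{Prop2} guarantees that, for the multi-indices $I$ all of whose variables $x_{l,m}^i$ satisfy $m>k_i$, this leading part is a genuine nonzero combination of coordinate points $e_{J'}$ with $d(I_1,J')=t$; letting $I$ range over all such multi-indices, these leading parts recover the full degree-$t$ homogeneous part of $\overline{\varphi}$, which is precisely the graded piece of $\Gamma_a$. Matching graded pieces level by level yields the reverse inclusion.

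The main obstacle is that $\mathbb{P}(\Gamma_a)$ is in general \emph{not} a coordinate subspace of $\mathbb{P}^N$: weight multiplicities in $\bigotimes_{i=1}^r\bigwedge^{k_i+1}V$ force $\Gamma_a$ to meet some coordinate weight spaces in a proper subspace, so one cannot compare $T_p^s\big(\prod_{i=1}^r\G(k_i,n)\big)\cap\mathbb{P}(\Gamma_a)$ with $T_p^s\F$ by matching coordinate directions $e_J$ directly. This is exactly why passing to the associated graded of the distance filtration is needed: there the relevant leading parts of the $\overline{\varphi}$-derivatives are honest vectors of $\langle e_J\ ;\ d(I_1,J)=t\rangle$, and the surjectivity-type statement in Lemma \ref{Prop2} is what certifies that these leading parts fill the \emph{entire} graded piece of $\Gamma_a$, rather than only a coordinate sub-piece of it.
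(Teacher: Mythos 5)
Your reduction to a coordinate point $p=e_{I_1}$ and your first inclusion $T_p^s\F\subseteq T_p^s\bigl(\prod_{i=1}^r\G(k_i,n)\bigr)\cap\mathbb{P}(\Gamma_a)$ are correct and match the paper. The genuine gap is in the reverse inclusion, at the step where you claim that the leading (distance-$t$) parts of the order-$t$ derivatives of $\overline{\varphi}$ fill the \emph{entire} graded piece $(\Gamma_a\cap F^t)/(\Gamma_a\cap F^{t-1})$, with $F^t=\langle e_J \,;\, d(I_1,J)\leq t\rangle$, ``certified'' by Lemma \ref{Prop2}. Lemma \ref{Prop2} is only a nonvanishing statement about one derivative at a time: it says that a single nonzero order-$t$ derivative of $\overline{\varphi}$, in variables with $m>k_i$, contains in its decomposition at least one term $\frac{\partial^{t}\varphi}{\partial x_{|J|}}(0)=\pm e_{J'}$ with $d(I_1,J')=t$. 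It says nothing about the image of $\Gamma_a\cap F^t$ in $F^t/F^{t-1}$. Concretely, an element of $\Gamma_a\cap F^t$ is a priori a combination of $\overline{\varphi}$-derivatives of \emph{all} orders, and a combination of derivatives of order $u>t$ could have all of its distance-$>t$ components cancel while leaving a distance-$t$ residue; nothing in your argument excludes that such a residue falls outside the span of the order-$t$ leading parts. Your sentence identifying ``the full degree-$t$ homogeneous part of $\overline{\varphi}$'' with ``the graded piece of $\Gamma_a$'' is exactly the statement $T_p^t\F=F^t\cap\mathbb{P}(\Gamma_a)$ in graded form, i.e.\ it assumes the proposition being proved; so the ascending induction is circular at its key step.

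The paper closes this hole by inducting \emph{downward} instead of upward. At the top order $s=r+\sum_{i=1}^r k_i$ one has $T_p^s\F=\mathbb{P}(\Gamma_a)$ and $T_p^s\prod_{i=1}^r\G(k_i,n)=\p^N$, so equality is trivial; assuming equality at order $s$, any $v\in T_p^{s-1}\prod_{i=1}^r\G(k_i,n)\cap\mathbb{P}(\Gamma_a)$ lies in $T_p^{s}\F$ and can therefore be written as a combination of $\overline{\varphi}$-derivatives of order $\leq s$ --- this induction hypothesis is precisely the handle your ascending argument lacks, since it tames all higher-order contributions at once. Lemma \ref{Prop2} is then used in the only direction it supports: each order-$s$ term (after reducing, via the preceding lemma, to multi-indices with all $m>k_i$) contributes a nonzero distance-$s$ component, while $v$, written as in $(\ref{v_in_s-1})$, has none; hence the order-$s$ coefficients $\beta_{|I|}$ vanish and $v\in T_p^{s-1}\F$. (Even there one implicitly needs that the distance-$s$ leading parts of the surviving order-$s$ derivatives do not cancel among themselves, a point the paper passes over quickly; but with the descending induction that is the \emph{only} independence required, whereas your route needs the much stronger, unproved, surjectivity onto the graded piece of $\Gamma_a$.) To repair your proposal you would either have to prove that graded surjectivity directly --- essentially redoing the proposition --- or switch to the paper's descending induction.
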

\begin{proof}
We may assume that $p=e_{I}$ where $I=\{I^1,\ldots,I^r\}$ and $I^l=\{0,\ldots,k_l\}$ for each $1\leq l\leq r$. Let us first assume that $s=r+\sum _{i=1}^r k_i$. Note that $s$ is the smallest integer for which $T_p^s\F(k_1,\ldots,k_r;n)=\mathbb{P}(\Gamma_a)$ and $T_p^s\prod_{i=1}^s  \G(k_i,n)=\p^N$, in this case $T_p^s\F(k_1,\ldots,k_r;n)=\mathbb{P}(\Gamma_a)=\mathbb{P}(\Gamma_a)\cap\p^N=\mathbb{P}(\Gamma_a)\cap T_p^s\prod_{i=1}^s\G(k_i,n)$
and we are done. Now, assume $s<r+\sum _{i=1}^r k_i$. Let
\stepcounter{thm}
\begin{equation}\label{v_in_s-1}
v=\displaystyle\sum_{|I|\leq s-1}\alpha_{|I|}\dfrac{\partial^{|I|}\varphi}{\partial x_{|I|}}(0)
\end{equation}
be a general vector in $T_p^{s-1}\prod_{i=1}^{r}  \G(k_i,n)$, and assume that 
\begin{center}
$v\in T_p^{s-1}\prod_{i=1}^r  \G(k_i,n)\cap \mathbb{P}(\Gamma_a)\subset T_p^{s}\prod_{i=1}^r  \G(k_i,n)\cap \mathbb{P}(\Gamma_a)= T_p^s\F(k_1,\ldots,k_r;n)$
\end{center}
this yields that $v$ can be written as 
\stepcounter{thm}
\begin{equation}\label{v_in_s}
v=\displaystyle\sum_{|I|\leq s-1}\beta_{|I|}\dfrac{\partial^{|I|}\overline{\varphi}}{\partial^{|I|}x_{|I|}}(0)+\displaystyle\sum_{|I|=s}\beta_{|I|}\dfrac{\partial^{|I|}\overline{\varphi}}{\partial^{|I|}x_{|I|}}(0)
\end{equation}
Now, recall that for any $I$ such that there are variables $x_{l,m}^i\in x_{|I|}$ with $m\leq k_i$ we can find another set $J$ for which $|J|<|I|$ and 
$$\dfrac{\partial^{s}\varphi'}{\partial x_{|I|}}(0)=\dfrac{\partial^{|J|}\varphi}{\partial x_{|J|}}(0)$$
Therefore, we can assume that any set $I$ in the second summand of (\ref{v_in_s}) is such that $m>k_i$ for any $x_{l,m}^i\in x_{|I|}$. Thus, by Lemma \ref{Prop2}, we will have an equality in (\ref{v_in_s-1}) and (\ref{v_in_s}) if and only if $\beta_{|I|}=0$ for any set $I$ such that $m>k_i$ for all $x_{l,m}^i\in x_{|I|}$. Hence $v\in T_p^{s-1}\F(k_1,\ldots,k_r;n)$.
\end{proof}

\subsection{Osculating Projections}
Let $s_1,\ldots,s_{\alpha}$ be integers such that $0\leq s_m\leq r-2+\sum_{i=1}^r k_i$. Denote $T_p^s\F(k_1,\ldots,k_r;n)$ simply by $T_p^s\F$ and the linear subspace $\langle T_{e_{I_1}}^{s_1}\F,\ldots,T_{e_{I_m}}^{s_m}\F\rangle$ by $T_{e_{I_1},\ldots,e_{I_m}}^{s_1,\ldots,s_m}\F$. Then, for $m\leq \alpha$ we have the linear projection 
$$\Pi_{T_{e_{I_1},\ldots,e_{I_{m}}}^{\hspace{0,1 cm}s_1\hspace{0,1 cm},\ldots,\hspace{0,1 cm}s_{m}}\F}:\F(k_1,\ldots,k_r;n)\dashrightarrow\p^{N_{s_1,\ldots,s_m}}$$ 
\begin{Proposition}\label{Osc_Proj_Flag}
Let $I_1,\ldots,I_{\alpha}$ be as in (\ref{Set_Indexes}) and $s=r-2+\sum_{i=1}^r k_i$. Then, 
\begin{itemize}
\item[-]$\Pi_{T_{e_{I_1},\ldots,e_{I_{\alpha-1}},}^{s,\ldots,s}\F}$ is birational;
\item[-]$\Pi_{T_{e_{I_1},\ldots,e_{I_{\alpha}}}^{s,\ldots,s}\F}$ is birational whenever $n\geq k_r^2+3k_r+1$. 
\end{itemize}
\end{Proposition}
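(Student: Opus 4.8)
The plan is to transfer to the flag variety the two birationality statements already proved for the product of Grassmannians --- namely that $\Pi_{T^{s,\ldots,s}_{e_{I_1},\ldots,e_{I_{\alpha-1}}}}$ is birational (Proposition \ref{Prop1}) and that $\Pi_{T^{s,\ldots,s}_{e_{I_1},\ldots,e_{I_{\alpha}}}}$ is birational when $n\geq k_r^2+3k_r+1$ (Proposition \ref{Osc_Proj_Product}) --- using that $\F(k_1,\ldots,k_r;n)$ is osculating well-behaved.

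First I would record the consequence of Proposition \ref{well-behaivior}: for each index $I_j$ one has $T^{s}_{e_{I_j}}\F=T^{s}_{e_{I_j}}\bigl(\prod_{i=1}^r\G(k_i,n)\bigr)\cap\mathbb{P}(\Gamma_a)$. Writing $C_{\F}:=\langle T^{s}_{e_{I_j}}\F\rangle$ and $C:=\langle T^{s}_{e_{I_j}}\bigl(\prod_{i=1}^r\G(k_i,n)\bigr)\rangle$ for the centers of the flag and of the product osculating projections, the inclusion $T^{s}_{e_{I_j}}\F\subseteq T^{s}_{e_{I_j}}\bigl(\prod_{i=1}^r\G(k_i,n)\bigr)$ gives, after spanning, $C_{\F}\subseteq C$ inside $\mathbb{P}^N$. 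The elementary point is that a smaller center refines the fibers of a larger one: if $a\neq b$ and $\Pi_{C_{\F}}(a)=\Pi_{C_{\F}}(b)$, then the line $\overline{ab}$ meets $C_{\F}\subseteq C$, so $\Pi_{C}(a)=\Pi_{C}(b)$. Hence every fiber of $\Pi_{C_{\F}}$ is contained in a fiber of $\Pi_{C}$, and this persists after restricting both maps to $\F(k_1,\ldots,k_r;n)$.

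It therefore suffices to show that $\Pi_{C}|_{\F}$ has a single point as its general fiber, since then $\Pi_{C_{\F}}|_{\F}=\Pi_{T^{s,\ldots,s}_{e_{I_1},\ldots}\F}$ is generically injective and thus, in characteristic zero, birational onto its image. I would take a general flag $(V_1\subset\cdots\subset V_r)\in\F(k_1,\ldots,k_r;n)$ and set $x=\Pi_{C}\bigl((V_i)_i\bigr)$. The fiber analysis in the proofs of Propositions \ref{Prop1} and \ref{Osc_Proj_Product} shows that any $(W_i)_i$ in the fiber of $\Pi_{C}$ over $x$ satisfies $W_m\subseteq\bigcap_{I'^m}\langle e_j\:|\:j\in I'^m\:;\:V_m\rangle=V_m$, hence $W_m=V_m$, for every $m$; its only genericity input is that each $V_m$ be in general position with respect to the fixed coordinate subspaces $\langle e_j\:|\:j\in I'^m\rangle$. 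Since the forgetful maps $\F(k_1,\ldots,k_r;n)\dashrightarrow\G(k_m,n)$ are dominant, the members $V_m$ of a general flag are themselves general $k_m$-planes, so this input holds. Thus the fiber of $\Pi_{C}$ over $x$ is the single point $(V_i)_i$, a fortiori so is its trace on $\F(k_1,\ldots,k_r;n)$, and $\Pi_{C}|_{\F}$ is generically injective. The first bullet then follows from Proposition \ref{Prop1} applied to $I_1,\ldots,I_{\alpha-1}$, and the second, under $n\geq k_r^2+3k_r+1$, from the second case of Proposition \ref{Osc_Proj_Product} applied to $I_1,\ldots,I_{\alpha}$.

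The hard part will be the last verification, because the product statements are phrased for a general point of $\prod_{i=1}^r\G(k_i,n)$, whereas a general flag is a highly special point of that product and could, a priori, fall into the locus where the fibers of $\Pi_{C}$ jump. What saves the argument is that the fiber computation never uses joint genericity of the tuple $(V_i)_i$: it invokes, one index $m$ at a time, only the general position of the single plane $V_m$ against fixed coordinate subspaces, which a general flag does satisfy. I would be careful to check, in the same spirit, that a general flag also avoids the indeterminacy loci of the auxiliary coordinate projections $\prod_{i=1}^r\Pi_{I'^i}$ used in those proofs, which again reduces to a general position statement for each $V_m$ separately.
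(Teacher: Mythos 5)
Your proposal is correct, and its skeleton coincides with the paper's: the paper also begins by observing that, since the center $\langle T^{s}_{e_{I_j}}\F\rangle$ is contained in $\langle T^{s}_{e_{I_j}}(\prod_i\G(k_i,n))\rangle$, the product osculating projection factors through the flag one, so it suffices to prove that the restriction of the \emph{product} projection to $\F(k_1,\ldots,k_r;n)$ is birational; it then runs the same Lemma \ref{Lemma2} diagrams with coordinate projections $\prod_i\Pi_{I'^i}$. Where you genuinely diverge is in how the fiber computation is closed. The paper does \emph{not} re-run the product argument factor by factor: it varies only the $r$-th blocks ($I_j'^r\subset I_j^r$ with $|I_j'^r|=k_r$, keeping $I_j'^i=I_j^i$ full for $i\neq r$), obtains $W_r=V_r$, and then exploits the nesting of the fiber point, $W_i\subset W_r=V_r$, together with the genericity statement $V_r\cap\langle e_m\,:\,m\in\bigcup_j I_j^i\rangle=\emptyset$ (which is why it checks the slightly stronger inequality $n-\sum_j|I_j'^i|\geq k_r+1$), to get $W_i\subset\langle e_m\,;\,V_i\rangle\cap V_r=V_i$ for $i<r$. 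You instead treat each factor symmetrically, re-running the fiber analysis of Propositions \ref{Prop1} and \ref{Osc_Proj_Product} for every $m$ and never using the flag structure of $(W_i)_i$; the point that makes this legitimate — and which you correctly isolate — is that every genericity input there (indeterminacy avoidance, the fiber description $W_m\subset\langle e_{I'^m},V_m\rangle$, and the key identity $\bigcap_{I'^m}\langle e_i\,:\,i\in I'^m;V_m\rangle=V_m$) is a condition on the single plane $V_m$ alone, hence holds on a dense open subset of $\G(k_m,n)$ because it holds for a general tuple, and transfers to a general flag by dominance of the forgetful maps. One caution: you cannot invoke Propositions \ref{Prop1} and \ref{Osc_Proj_Product} as black boxes, since their hypotheses place $(V_i)_i$ general in the product, so your argument really is a re-proof of their fiber analyses at a special point, and the per-factor genericity claim is the load-bearing verification; your approach buys uniformity and avoids the extra inequality, while the paper's use of the nesting $W_i\subset W_r$ shortcuts the case $i<r$ at the cost of a flag-specific argument.
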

\begin{proof}
Since $\Pi_{T_{e_{I_1},\ldots,e_{I_{\alpha-1}}}^{s,\ldots,s}}$ factors trough $\Pi_{T_{e_{I_1},\ldots,e_{I_{\alpha-1}}}^{s,\ldots,s}\F}$, it is enough to show that the restriction of $\Pi_{T_{e_{I_1},\ldots,e_{I_{\alpha-1}}}^{s,\ldots,s}}$ to $\F(k_1,\ldots,k_r)$ is birational.

For any $i\neq r$ and  $1\leq j\leq\alpha-1$ consider $I_j'^i= I_j^i$ and $I_j'^r\subset I_j^r$ of cardinality $k_r$. Since $n\geq 2k_r+1$ and $k_r\geq k_i$ we must have
$$
\begin{array}{ccl}
 n-\displaystyle\sum_{j=1}^{\alpha-1}|I_j'^i|&=&n-(\alpha-1)(k_i+1)\geq n-(\alpha-1)k_r\geq k_r+1\leq k_i+1
\end{array}$$
Now, let us denote by $I'^i$ the union $\bigcup_{j=1}^{\alpha-1}I_j'^i$. Then, by Lemma \ref{Lemma2} there exists a rational map $\pi_{I'^r}$ making the following diagram commutative
\[
\begin{tikzpicture}[xscale=3.9,yscale=-1.8]
    \node (A0_0) at (0, 0) {$\F(k_1,\ldots,k_r;n)$};
    \node (A0_1) at (1, 0) {$\P^{N'_{s,\ldots,s}}$};
    \node (A1_1) at (1, 1) {$\prod_{i=1}^r\G(k_i,n-\sum_{j=1}^{\alpha} |I_j'^i|)$};
    \path (A0_0) edge [->,dashed]node [auto] {$\scriptstyle{\Pi_{T_{e_{I_1},\ldots,e_{I_{\alpha-1}},e_{I_{\alpha}}}^{\hspace{0,1 cm}s\hspace{0,1 cm},\ldots,\hspace{0,1 cm}s,s'}}}$} (A0_1);
    \path (A0_1) edge [->,dashed]node [auto] {$\scriptstyle{\pi_{I'^r}}$} (A1_1);
    \path (A0_0) edge [->,dashed,swap]node [auto] {$\scriptstyle{\prod_{i=1}^r\Pi_{I'^i}}$} (A1_1);
\end{tikzpicture}
\]
Now, let $x=(\{V_i\}_{i=1}^r)$ be a general point in the image of $\Pi_{T_{e_{I_1},\ldots,e_{I_{\alpha-1}}}^{s,\ldots,s}}$ and $X\subset \F(k_1,\ldots,k_r;n)$ be the fiber of $\Pi_{T_{e_{I_1},\ldots,e_{I_{\alpha-1}}}^{s,\ldots,s}}$ over $x$. Set $x_{I'^r}=\pi_{I'^r}(x)$ and denote by $X_{I'^r}\subset \F(k_1,\ldots,k_r;n)$ the fiber of $\prod_{i=1}^r\Pi_{I'^i}$ over $x_{I'^r}$.

Therefore, $X\subset \displaystyle\bigcap_{I'^r} X_{I'^r}$, where the intersection runs over all sets $I'^r=\bigcup_{j=1}^{\alpha-1}I_j'^r$ with $I_j'^r\subset I_j^r$ and $|I_j'^r|=k_r$ for $1\leq j \leq\alpha-1$.

Now, note that if $\{W_i\}_{i=1}^r\in X$ is a general point, then we must have $W_i\subset\langle e_m\:|\:m\in\bigcup_{j=1}^{\alpha} I_j'^i\:;\:V_i\rangle$ for any choice of $\bigcup_{j=1}^{\alpha} I_j'^i$. Hence,
\stepcounter{thm}
\begin{equation}\label{W_i}
W_i\subset\displaystyle\bigcap_{I'^r}\langle e_m\:|\:m\in\bigcup_{j=1}^{\alpha} I_j'^i\:;\:V_i\rangle
\end{equation}
In particular, $W_r\subset\displaystyle\bigcap_{I'^r}\langle e_m\:|\:m\in\bigcup_{j=1}^{\alpha} I_j'^r;V_r\rangle$. Now, since $|I_j'^r|\leq k_r$ we must have $\displaystyle\bigcap_{I'^r}\langle e_m\:|\:m\in\bigcup_{j=1}^{\alpha} I_j'^r\rangle=\emptyset$ and then $V_r=\displaystyle\bigcap_{I'^r}\langle e_m\:|\:m\in\bigcup_{j=1}^{\alpha} I_j'^r\:;\:V_r\rangle$ which yields $W_r=V_r$.

Now, set $i\leq r-1$. Since $\{V_i\}_{i\in K}$ is general in $\F(k_1,\ldots,k_r;n)$ and $n-\displaystyle\sum_{j=1}^{\alpha-1}|I_j'^i|\geq k_r+1$ we have $V_r\cap\langle e_m\:|\:m\in\bigcup_{j=1}^{\alpha} I_j^i\rangle=\emptyset$. On the other hand $W_i\subset W_r=V_r$ for all $i\leq r-1$, then $W_i\cap\langle e_m\:|\:m\in\bigcup_{j=1}^{\alpha} I_j^i\rangle=\emptyset$. Hence, by (\ref{W_i}) we must have $W_i=V_i$ for any $i\leq r-1$.

Now, assume that $n\geq k_r^2+3k_r+1$ then 
$$
\begin{array}{ccl}
n-\alpha(k_i+1)\geq n-\alpha k_r&\geq& n-\dfrac{(n+1)}{k_r+1}k_r = \dfrac{n(k_r+1)-(n+1)k_r}{k_r+1}\\
&=& \dfrac{n-k_r}{k_r+1}\geq \dfrac{k_r^2+3k_r+1-k_r}{k_r+1} = k_r+1
\end{array} 
$$

Then, arguing as in the proof of the first case, for any choice of subsets $I_j'^i\subset I_j^i$, $I_j'^i= I_j^i$ with $i\neq r$ and $1\leq j\leq\alpha-1$, $I_j'^r\subsetneq I_j^r$ of cardinality $k_r$ we get, by Lemma \ref{Lemma2}, a rational map $\pi_{I'^r}$ making the following diagram commutative
\[
\begin{tikzpicture}[xscale=3.9,yscale=-1.8]
    \node (A0_0) at (0, 0) {$\F(k_1,\ldots,k_r;n)$};
    \node (A0_1) at (1, 0) {$\P^{N'_{s,\ldots,s}}$};
    \node (A1_1) at (1, 1) {$\prod_{i=1}^r\G(k_i,n-\sum_{j=1}^{\alpha} |I_j'^i|)
$};
    \path (A0_0) edge [->,dashed]node [auto] {$\scriptstyle{\Pi_{T_{e_{I_1}},\ldots,e_{I_{\alpha}}}}$} (A0_1);
    \path (A0_1) edge [->,dashed]node [auto] {$\scriptstyle{\pi_{I'^r}}$} (A1_1);
    \path (A0_0) edge [->,dashed,swap]node [auto] {$\scriptstyle{\prod_{i=1}^r\Pi_{I'^i}}$} (A1_1);
\end{tikzpicture}
\]
where $I'^i=\bigcup_{j=1}^{\alpha}I_j'^i$, $i=1,\ldots,r$. Now, to conclude it is enough to follow the same argument used in the end of the proof of the first claim.
\end{proof}

\subsection{Non-Secant defectivity of flag varieties}
We recall \cite[Proposition 4.4]{FMR18} which describes how the notion of osculating regularity behaves under linear sections.
\begin{Proposition}\label{Prop5}
Let $X\subset\mathbb{P}^N$ be an irreducible projective variety and $Y=\mathbb{P}^k\cap X$ a linear section of $X$ that is osculating well-behaved. Assume that given general points $p_1,\ldots,p_m\in Y$ one can find smooth curves $\gamma_j:C\rightarrow X, j=2,\dots,m,$ realizing the $m$-osculating regularity of $X$ for $p_1,\ldots,p_m$ such that $\gamma_j(C)\subset Y.$  Then $Y$ has $m$-osculating regularity as well. Furthermore, the analogous statement for strong $2$-osculating regularity holds as well.
\end{Proposition}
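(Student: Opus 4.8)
The plan is to show that the very curves $\gamma_2,\dots,\gamma_m$ furnished by the hypothesis already realize the $m$-osculating regularity of $Y$, so that no new curves need to be produced. Since by assumption each $\gamma_j$ maps $C$ into $Y$ and satisfies $\gamma_j(t_0)=p_1$, $\gamma_j(t_\infty)=p_j$, it suffices to control the flat limit of the family
\begin{equation*}
T_t^Y=\left\langle T^s_{p_1}Y,T^s_{\gamma_2(t)}Y,\dots,T^s_{\gamma_m(t)}Y\right\rangle,\qquad t\in C\setminus\{t_0\},
\end{equation*}
and to compare it with the flat limit $T_0^X$ of the corresponding family $T_t^X=\left\langle T^s_{p_1}X,\dots,T^s_{\gamma_m(t)}X\right\rangle$ built from the osculating spaces of $X$, which by the $m$-osculating regularity of $X$ satisfies $T_0^X\subseteq T^{2s+1}_{p_1}X$.

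First I would exploit the hypothesis that $Y=\mathbb{P}^k\cap X$ is osculating well-behaved. For general $t$ the points $\gamma_j(t)$ and $p_1$ are smooth points of $Y$, so $T^s_{\gamma_j(t)}Y=\mathbb{P}^k\cap T^s_{\gamma_j(t)}X$ and $T^s_{p_1}Y=\mathbb{P}^k\cap T^s_{p_1}X$. Each osculating space of $Y$ is therefore contained both in $\mathbb{P}^k$ and in the corresponding osculating space of $X$; passing to spans gives, for general $t\neq t_0$,
\begin{equation*}
T_t^Y\subseteq\mathbb{P}^k\qquad\text{and}\qquad T_t^Y\subseteq T_t^X.
\end{equation*}

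The decisive step is to transfer these two fiberwise containments to the flat limits. For general points and general $t$ the dimensions of $T_t^Y$ and of $T_t^X$ are constant, say $d$ and $D$, so the families define morphisms from a punctured neighbourhood of $t_0$ into $\G(d,N)$ and $\G(D,N)$, which extend over $t_0$ by properness of the Grassmannians, yielding $T_0^Y$ and $T_0^X$. Since every $T_t^Y$ lies in the fixed linear space $\mathbb{P}^k$, the family factors through the closed sub-Grassmannian $\G(d,k)\subset\G(d,N)$, whence $T_0^Y\subseteq\mathbb{P}^k$. To obtain $T_0^Y\subseteq T_0^X$ I would regard the pair $t\mapsto(T_t^Y,T_t^X)$ as a morphism into $\G(d,N)\times\G(D,N)$: for $t\neq t_0$ it lands in the incidence variety $\{(\Lambda,\Lambda')\ :\ \Lambda\subseteq\Lambda'\}$, which is closed, so its limit $(T_0^Y,T_0^X)$ lies there as well, i.e.\ $T_0^Y\subseteq T_0^X$. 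Combining the three containments and using osculating well-behavedness once more at $p_1$ in order $2s+1$ we conclude
\begin{equation*}
T_0^Y\subseteq\mathbb{P}^k\cap T_0^X\subseteq\mathbb{P}^k\cap T^{2s+1}_{p_1}X=T^{2s+1}_{p_1}Y,
\end{equation*}
which is exactly the $m$-osculating regularity of $Y$. The strong $2$-osculating regularity case is identical after replacing the single order $s$ by two orders $s_1,s_2$, the bound $2s+1$ by $s_1+s_2+1$, and the $m$ curves by the single curve $\gamma$ provided by the analogous hypothesis.

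The hard part will be the flat-limit transfer in the third step: one must be sure that the flat limit of the restricted family $\{T_t^Y\}$ computed inside $\G(d,k)$ (or inside the product with $\G(D,N)$) agrees with the flat limit taken in the ambient $\G(d,N)$, and that the generic dimension $d$ is attained on a whole punctured neighbourhood of $t_0$ rather than merely on a dense open subset of $C$. Both points are settled by the standard fact that a morphism from a punctured curve to a projective variety extends uniquely, applied to the closed embedding $\G(d,k)\hookrightarrow\G(d,N)$ and to the projection $\G(d,N)\times\G(D,N)\to\G(d,N)$; the two limit containments are then automatic because the loci ``$\Lambda\subseteq\mathbb{P}^k$'' and ``$\Lambda\subseteq\Lambda'$'' are closed. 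Everything else is a formal consequence of the osculating well-behavedness of $Y$.
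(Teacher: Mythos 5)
Your proof is correct: the same curves $\gamma_j$ inside $Y$, the fiberwise containments $T_t^Y\subseteq\mathbb{P}^k\cap T_t^X$ from osculating well-behavedness, the passage to flat limits via properness of the Grassmannian and closedness of the loci $\Lambda\subseteq\mathbb{P}^k$ and $\Lambda\subseteq\Lambda'$, and the final identification $\mathbb{P}^k\cap T_{p_1}^{2s+1}X=T_{p_1}^{2s+1}Y$ are precisely the needed ingredients, and the technical points you flag (constancy of dimension near $t_0$, extension of the map to the Grassmannian over $t_0$) are settled exactly as you say. Note that the paper itself gives no proof---it recalls the statement from \cite[Proposition 4.4]{FMR18}---and your argument is essentially the one given in that reference, so your attempt matches the intended proof.
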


\begin{Proposition}
The flag variety $\F(k_1,\ldots,k_r;n)$ has strong $2$-osculating regularity and $\alpha$-osculating regularity, where $\alpha:=\left\lfloor\frac{n+1}{k_r+1}\right\rfloor$.
\end{Proposition}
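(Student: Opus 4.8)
The plan is to realise $\F(k_1,\ldots,k_r;n)$ as an osculating well-behaved linear section of $\prod_{i=1}^r\G(k_i,n)$ and then to invoke Proposition \ref{Prop5}. Recall that $\F(k_1,\ldots,k_r;n)=\mathbb{P}(\Gamma_a)\cap\prod_{i=1}^r\G(k_i,n)$, and that by Proposition \ref{well-behaivior} this linear section is osculating well-behaved. Hence, by Proposition \ref{Prop5}, in order to transfer both strong $2$-osculating regularity (Proposition \ref{Strong_2_Osc_Reg}) and $\alpha$-osculating regularity (Proposition \ref{Osc_Reg}) from $\prod_{i=1}^r\G(k_i,n)$ to the flag variety, it is enough to check that the curves used in the proofs of those two propositions can be chosen to lie inside $\F(k_1,\ldots,k_r;n)$.

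Since $\F(k_1,\ldots,k_r;n)$ is homogeneous under $GL(n+1)$ and $\alpha(k_r+1)\leq n+1$, a general tuple of points may be assumed to be $e_{I_1},\ldots,e_{I_\alpha}$ with $I_1,\ldots,I_\alpha$ as in (\ref{Set_Indexes}); these are flags, since $I_j^1\subset\cdots\subset I_j^r$ for every $j$. For such points I would take exactly the rational normal curves
$$\gamma_j(s:t)=\prod_{i=1}^r(se_0+te_{(k_r+1)(j-1)})\wedge\cdots\wedge(se_{k_i}+te_{(k_r+1)(j-1)+k_i})$$
already employed in Propositions \ref{Strong_2_Osc_Reg} and \ref{Osc_Reg}, with $\gamma=\gamma_2$ in the strong $2$-osculating case.

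The key step, and the only real point to verify, is that these curves are contained in the flag variety rather than merely in the product of Grassmannians. For fixed $j$ and $s:t$, set $v_l=se_l+te_{(k_r+1)(j-1)+l}$. The $i$-th factor of $\gamma_j(s:t)$ is the point of $\G(k_i,n)$ corresponding to the subspace $\langle v_0,\ldots,v_{k_i}\rangle$. Since the same moving vectors $v_0,\ldots,v_{k_r}$ serve all factors and $k_1\leq\cdots\leq k_r$, the resulting subspaces satisfy $\langle v_0,\ldots,v_{k_1}\rangle\subset\cdots\subset\langle v_0,\ldots,v_{k_r}\rangle$, so $\gamma_j(s:t)$ is a genuine flag. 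Therefore $\gamma_j(C)\subset\F(k_1,\ldots,k_r;n)$, and in particular $\gamma_j(t_0)=e_{I_1}$ and $\gamma_j(t_\infty)=e_{I_j}$ lie on the flag variety as well.

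With the curves confined to $\F(k_1,\ldots,k_r;n)$, the hypotheses of Proposition \ref{Prop5} are met for both regularity notions, and the conclusion follows at once. The main obstacle is precisely the containment check above; everything else is a formal application of the well-behavedness of the section together with Proposition \ref{Prop5}. It is worth stressing that this containment is a structural feature of the chosen parametrisation: the curves were built from a single set of moving vectors common to all factors, which is exactly what forces the nesting condition defining the flag.
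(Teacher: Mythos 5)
Your proposal is correct and follows essentially the same route as the paper, whose proof is simply the one-line citation of Propositions \ref{Strong_2_Osc_Reg}, \ref{Osc_Reg} and \ref{Prop5}. You additionally spell out the one verification that the paper leaves implicit in invoking Proposition \ref{Prop5} -- that the rational normal curves $\gamma_j$ from those proofs consist of nested subspaces (being spanned by a common set of moving vectors $v_0,\ldots,v_{k_i}$) and hence lie in $\F(k_1,\ldots,k_r;n)$ -- which is exactly the right point to check.
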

\begin{proof}
The statement follows immediately from Propositions \ref{Strong_2_Osc_Reg}, \ref{Osc_Reg}, \ref{Prop5}.
\end{proof}

Now, we are ready to prove our main result on non-defectivity of flags varieties.

\begin{thm}\label{Bound_Flags}
Assume that $n\geq 2k_r+1$. Set 
$$\alpha:=\left\lfloor\dfrac{n+1}{k_r+1}\right\rfloor$$
and let $h_{\alpha}$ be as in Definition \ref{h_m}. If either
\begin{itemize}
\item[-] $n\geq k_r^2+3k_r+1$ and $h\leq \alpha h_{\alpha}(\sum k_j+r-2)$ or
\item[-] $n< k_r^2+3k_r+1$ and $h\leq (\alpha-1) h_{\alpha}(\sum k_j+r-2)$.
\end{itemize}
Then, $\F(k_1,\ldots,k_r;n)$ is not $(h+1)$-defective. In particular, if
$$h\leq \left(\dfrac{n+1}{k_r+1}\right)^{\lfloor \log_2(\sum k_j+r-1)\rfloor}$$
then $\F(k_1,\ldots,k_r;n)$ is not $(h+1)$-defective.
\end{thm}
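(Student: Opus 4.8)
The plan is to read this off as a direct application of the degeneration criterion in Theorem \ref{Theorem 5.3} to the flag variety, with every geometric ingredient already assembled in the preceding results. By the Proposition immediately before the statement, $\F(k_1,\ldots,k_r;n)$ has strong $2$-osculating regularity and $\alpha$-osculating regularity; by Proposition \ref{well-behaivior} its osculating spaces are cut out on $\prod_{i=1}^r\G(k_i,n)$ by $\mathbb{P}(\Gamma_a)$, which is exactly what lets Proposition \ref{Osc_Proj_Flag} produce birational (hence generically finite) osculating projections from the coordinate points $e_{I_1},\ldots,e_{I_\alpha}$. Thus both hypotheses of Theorem \ref{Theorem 5.3} --- osculating regularity together with a generically finite osculating projection --- are in place, and the argument reduces to matching the parameters $l$, $s_1,\ldots,s_l$.

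First I would set $s=r-2+\sum_{i=1}^r k_i$, the largest admissible osculating order for the points in \eqref{Set_Indexes}. When $n\geq k_r^2+3k_r+1$, Proposition \ref{Osc_Proj_Flag} gives that the projection from all $\alpha$ points with orders $s_1=\cdots=s_\alpha=s$ is birational; applying Theorem \ref{Theorem 5.3} with $l=\alpha$ and each $s_j=s$ yields non-$(h+1)$-defectivity for $h\leq\sum_{j=1}^\alpha h_\alpha(s)=\alpha\,h_\alpha(\sum k_j+r-2)$. When $n<k_r^2+3k_r+1$, only the projection from the first $\alpha-1$ points is guaranteed birational, so I would take $l=\alpha-1$ and obtain $h\leq(\alpha-1)h_\alpha(\sum k_j+r-2)$. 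This establishes the two displayed bullet bounds.

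For the simplified ``in particular'' estimate I would mimic the proof of Corollary \ref{main_cor}: write $\sum k_j+r-1=2^{\lambda_1}+\cdots+2^{\lambda_l}+\varepsilon$ with $\lambda_1>\cdots>\lambda_l\geq1$ and $\varepsilon\in\{0,1\}$, so that by Definition \ref{h_m} one has $h_\alpha(\sum k_j+r-2)=\alpha^{\lambda_1-1}+\cdots+\alpha^{\lambda_l-1}$ and $\lambda_1=\lfloor\log_2(\sum k_j+r-1)\rfloor$. The leading term of the first bullet bound is then $\alpha^{\lambda_1}=\alpha^{\lfloor\log_2(\sum k_j+r-1)\rfloor}$, and since $\alpha=\lfloor(n+1)/(k_r+1)\rfloor$ this is asymptotically $\left((n+1)/(k_r+1)\right)^{\lfloor\log_2(\sum k_j+r-1)\rfloor}$, which gives the clean bound exactly as in Corollary \ref{main_cor}.

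Because the two genuinely hard ingredients --- the osculating regularity (Propositions \ref{Strong_2_Osc_Reg}, \ref{Osc_Reg}, \ref{Prop5}) and the birationality of the multi-point osculating projections (Proposition \ref{Osc_Proj_Flag}) --- are already in hand, I do not expect a serious obstacle; the proof is essentially bookkeeping on top of Theorem \ref{Theorem 5.3}. The one point demanding care is the case split: one must read off from Proposition \ref{Osc_Proj_Flag} that all $\alpha$ osculating points are usable \emph{only} when $n\geq k_r^2+3k_r+1$, and otherwise fall back to $\alpha-1$ points, since retaining too many points would make the osculating projection fail to be generically finite and thereby void the hypothesis of Theorem \ref{Theorem 5.3}.
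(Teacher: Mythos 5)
Your proposal is correct and follows essentially the same route as the paper: the first part is exactly the paper's combination of Proposition \ref{Prop5} (regularity descends to the flag variety), Proposition \ref{Osc_Proj_Flag} (birationality of the multi-point osculating projections, with the same case split at $n\geq k_r^2+3k_r+1$), and Theorem \ref{Theorem 5.3} with $s_1=\cdots=s_l=r-2+\sum k_i$, while the ``in particular'' bound is derived via the identical binary expansion of $\sum k_j+r-1$. Even your passage from $\alpha^{\lambda_1}$ to $\left((n+1)/(k_r+1)\right)^{\lfloor\log_2(\sum k_j+r-1)\rfloor}$ matches the paper's own (slightly informal, ``asymptotic'') handling of that step.
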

\begin{proof}
The first part is an immediately consequence of Propositions \ref{Prop5}, \ref{Osc_Proj_Flag} and Theorem \ref{Theorem 5.3}. For the last claim note that if we write
\stepcounter{thm}
\begin{equation}\label{h_m(s)2}
\sum k_j+r-1=2^{\lambda_1}+2^{\lambda_2}+\cdots+2^{\lambda_l}+\varepsilon
\end{equation}
with $\lambda_1>\lambda_2>\cdots>\lambda_l\geq1$ and $\varepsilon\in \{0,1\}$. Then
$$h_{\alpha}(\sum k_j+r-2)=\alpha^{\lambda_1-1}+\alpha^{\lambda_2-1}+\cdots+\alpha^{\lambda_l-1}$$
Therefore, the first bound in Theorem \ref{Bound_Flags} yields
$$h\leq \alpha^{\lambda_1}+\alpha^{\lambda_2}+\cdots+\alpha^{\lambda_l}$$

Furthermore, by the second bound in Theorem \ref{Bound_Flags} we get that $\F(k_1,\ldots,k_r;n)$ is not $(h+1)$-defective for
$$h\leq (\alpha-1)(\alpha^{\lambda_1-1}+\alpha^{\lambda_2-1}+\cdots+\alpha^{\lambda_l-1})$$

Finally, by (\ref{h_m(s)2}) we get that $\lambda_1=\lfloor \log_2(\sum k_j+r-1)\rfloor$. Hence, asymptotically we have $h_{\alpha}(\sum k_j+r-2)\sim\alpha^{\lfloor \log_2(\sum k_j+r-1)\rfloor}$, and by Theorem \ref{Bound_Flags} for $h\leq \alpha^{\lfloor \log_2(\sum k_j+r-1)\rfloor}\leq \left(\dfrac{n+1}{k_r+1}\right)^{\lfloor \log_2(\sum k_j+r-1)\rfloor}$ the flag variety $\F(k_1,\ldots,k_r;n)$ is not $(h+1)$-defective.
\end{proof}

\begin{Remark}\label{reduction}
Now, given a flag $\F(k_1,\ldots,k_r;n)$ with $n<2k_r+1$. Assume that $n\geq 2k_{j}+1$ for some index $j$ and let $l$ be the maximum among these j's. Then we have a natural projection
$$\begin{array}{ccccl}
\pi&:&\F(k_1,\ldots,k_r;n)&\longrightarrow&\F(k_1,\ldots,k_l;n)\\
&&\{V_i\}_{i=1,\ldots,r}&\longmapsto&\{V_i\}_{i=1,\ldots,l}
\end{array}$$
The fiber of $\pi$ over a general point in $\F(k_1,\ldots,k_l;n)$ is isomorphic to $\F(k_{l+1}-k_l-1,\ldots,k_r-k_l-1;n-k_l-1)$. Now let $p_1,\ldots,p_{h}\in \F(k_1,\ldots,k_l;n)$ be general points, and $T_{p_i}\F(k_1,\ldots,k_l;n)$ be the tangent space at $p_i$. Then, we have 
$$T_{\pi^{-1}(p_i)}\F(k_1,\ldots,k_r;n)= \left\langle T_{p_i}\F(k_1,\ldots,k_l;n), T_{\pi^{-1}(p_i)}\F(k_{l+1}-k_l,\ldots,k_r-k_l;n-k_l)\right\rangle$$
and $T_{p_i}\F(k_1,\ldots,k_l;n)\cap T_{\pi^{-1}(p_i)}\F(k_{l+1}-k_l,\ldots,k_r-k_l;n-k_l) = \emptyset$.

Now, observe that if $T_{\pi^{-1}(p_i)}\F(k_1,\ldots,k_r;n)\cap T_{\pi^{-1}(p_j)}\F(k_1,\ldots,k_r;n)\neq \emptyset$ then 
$$\dim\left\langle T_{\pi^{-1}(p_j)}\F(k_1,\ldots,k_r;n)\:;\:j=1,\ldots,h\right\rangle\leq h\dim \F(k_1,\ldots,k_l;n)+h-2$$
Since $T_{\pi^{-1}(p_i)}\F(k_{l+1}-k_l-1,\ldots,k_r-k_l-1;n-k_l-1)$ is contracted by $\pi$ for any $j=1,\ldots,h$ we have that 
$$
\begin{array}{ccl}
\dim \pi(T)&\leq& h\dim \F(k_1,\ldots,k_r;n)+h-2-h\dim\F(k_{l+1}-k_l,\ldots,k_r-k_l;n-k_l)\\
&=&h\dim \F(k_1,\ldots,k_l;n)+h-2
\end{array}
$$ 
where $T=\left\langle T_{\pi^{-1}(p_i)}\F(k_1,\ldots,k_r;n)\:;\:i=1,\ldots,h\right\rangle$.

In particular, by Terracini's lemma \cite{Te12} we have that if $\F(k_1,\ldots,k_l;n)$ is not $h$-defective, then $\F(k_1,\ldots,k_r;n)$ is not $h$-defective.
\end{Remark}

\begin{thm}\label{main}
Consider a flag variety $\F(k_1,\ldots,k_r;n)$ with $n<2k_r+1$. Assume that $n\geq 2k_{j}+1$ for some index $j$ and let $l$ be the maximum among these j's. Then, for 
$$h\leq\left(\frac{n+1}{k_l+1}\right)^{\lfloor \log_2(\sum_{j=1}^l k_j+l-1)\rfloor}$$
$\F(k_1,\ldots,k_r;n)$ is not $(h+1)$-defective.
\end{thm}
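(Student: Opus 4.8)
The plan is to reduce the statement to the already-treated regime $n\geq 2k_r+1$ by discarding the ``oversized'' top indices $k_{l+1},\ldots,k_r$ and then invoking the comparison recorded in Remark \ref{reduction}. There is no new geometry to build: all the work has been done in Theorem \ref{Bound_Flags} and in Remark \ref{reduction}, and what remains is to fit them together with the correct bookkeeping.

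First I would observe that, by the maximality of $l$, we have $n\geq 2k_l+1$, so the truncated flag variety $\F(k_1,\ldots,k_l;n)$ satisfies the standing hypothesis of Theorem \ref{Bound_Flags}, now with $l$ playing the role of $r$ and $k_l$ that of $k_r$. Applying the final assertion of Theorem \ref{Bound_Flags} to $\F(k_1,\ldots,k_l;n)$ then gives that this variety is not $(h+1)$-defective as soon as
$$h\leq\left(\frac{n+1}{k_l+1}\right)^{\lfloor \log_2(\sum_{j=1}^l k_j+l-1)\rfloor},$$
which is precisely the bound in the statement.

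Next I would transfer this non-defectivity up to the full flag $\F(k_1,\ldots,k_r;n)$ by means of Remark \ref{reduction}. The projection $\pi\colon\F(k_1,\ldots,k_r;n)\to\F(k_1,\ldots,k_l;n)$ contracts its fibers, which are isomorphic to the smaller flag $\F(k_{l+1}-k_l-1,\ldots,k_r-k_l-1;n-k_l-1)$; moreover the tangent space at $\pi^{-1}(p_i)$ splits as the span of the tangent space to the base flag at $p_i$ and the fiber tangent space, the two being disjoint. Hence, by Terracini's lemma \cite{Te12}, if the base tangent spaces $T_{p_i}\F(k_1,\ldots,k_l;n)$ are in general position, i.e. $\F(k_1,\ldots,k_l;n)$ is not $(h+1)$-defective, then the full tangent spaces are as well, so $\F(k_1,\ldots,k_r;n)$ is not $(h+1)$-defective. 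Combining the two steps yields the claim.

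Since the argument is essentially formal once Theorem \ref{Bound_Flags} and Remark \ref{reduction} are available, I do not expect a serious obstacle here. The single point deserving care is the index bookkeeping inside Remark \ref{reduction}: one must check that the dimension count correctly cancels the contracted fiber contribution, so that a hypothetical excess intersection of the full tangent spaces forces a corresponding drop for the base flag. That is the only nontrivial verification, and it has already been carried out in Remark \ref{reduction}.
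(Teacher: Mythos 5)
Your proposal is correct and follows exactly the paper's own route: the proof of Theorem \ref{main} in the paper is precisely to apply Theorem \ref{Bound_Flags} to the truncated flag $\F(k_1,\ldots,k_l;n)$ (legitimate since maximality of $l$ gives $n\geq 2k_l+1$) and then lift non-defectivity to $\F(k_1,\ldots,k_r;n)$ via the projection and Terracini argument of Remark \ref{reduction}. Your only addition is to spell out the bookkeeping that the paper compresses into the phrase ``immediate consequence,'' which is harmless.
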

\begin{proof}
It is an immediate consequence of Theorem \ref{Bound_Flags} and Remark \ref{reduction}.
\end{proof}

\subsection{On identifiability of products of Grassmannians and flag varieties}
Let $X\subset\mathbb{P}^N$ be an irreducible non-degenerated variety. A point $p\in\mathbb{P}^N$ is said to be $h$-identifiable, with respect to $X$, if it lies on a unique $(h-1)$-plane $h$-secant to $X$. Furthermore, $X$ is said to be $h$-identifiable if a general point of $\mathbb{S}ec_h(X)$ is $h$-identifiable.

Now, we combine our bounds on non-secant defectivity of products of Grassmannians and flag varieties and \cite[Theorem 3]{CM19} to get the following. 

\begin{Corollary}\label{CorId}
Consider the product of Grassmannians $\prod_{i=1}^r\mathbb{G}(k_i,n)$. Assume that $2\prod_{i=1}^r(k_i+1)(n-k_i)-1 \leq \left(\frac{n+1}{k_r+1}\right)^{\lfloor\log_2(\sum k_i+r-1)\rfloor}$. Then, $\prod_{i=1}^r\mathbb{G}(k_i,n)$ is $h$-identifiable for $h \leq \left(\frac{n+1}{k_r+1}\right)^{\lfloor\log_2(\sum k_i+r-1)\rfloor}$.

Furthermore, let us suppose that $n\geq 2k_{j}+1$ for some index $j$ and consider $l$ the maximum among these j's. Assume that $2((k_1+1)(n-k_1)+\sum_{j=2}^i(n-k_j)(k_j-k_{j-1}))-1\leq \left(\frac{n+1}{k_l+1}\right)^{\lfloor \log_2(\sum_{j=1}^l k_j+l-1)\rfloor}$. Then $\F(k_1,\ldots,k_r;n)$ is $h$-identifiable for $h\leq\left(\frac{n+1}{k_l+1}\right)^{\lfloor \log_2(\sum_{j=1}^l k_j+l-1)\rfloor}$.
\end{Corollary}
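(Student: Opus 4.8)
The plan is to read this statement as a formal consequence of the non-defectivity bounds already established, combined with the identifiability criterion of \cite[Theorem 3]{CM19}; no genuinely new geometric input is needed, the work being entirely in matching hypotheses. Recall that \cite[Theorem 3]{CM19} upgrades non-secant-defectivity to generic identifiability: for a non-degenerate irreducible $X\subset\p^N$ that is not $(h+1)$-defective, a general point of $\mathbb{S}ec_h(X)$ is $h$-identifiable once the non-defective range is large relative to $\dim X$, the relevant numerical requirement translating, for a variety of dimension $d$, into $2d-1$ being at most the top rank up to which $X$ is non-defective. I would therefore write $h_{\mathrm{bound}}$ for the non-defectivity bound appearing in the relevant case, isolate the two inputs -- non-defectivity and the numerical inequality -- and feed them into that theorem.

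For the product of Grassmannians I would first invoke Corollary \ref{main_cor}, which gives that $\prod_{i=1}^r\mathbb{G}(k_i,n)$ is not $(h+1)$-defective for every $h\leq h_{\mathrm{bound}}=\left(\frac{n+1}{k_r+1}\right)^{\lfloor\log_2(\sum k_i+r-1)\rfloor}$. Next I would record $\dim\prod_{i=1}^r\mathbb{G}(k_i,n)=\sum_{i=1}^r(k_i+1)(n-k_i)$, so that the standing hypothesis $2\sum_{i=1}^r(k_i+1)(n-k_i)-1\leq h_{\mathrm{bound}}$ is exactly the inequality $2\dim X-1\leq h_{\mathrm{bound}}$ required to run \cite[Theorem 3]{CM19}, yielding $h$-identifiability in the full range $h\leq h_{\mathrm{bound}}$. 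For the flag variety the argument is identical: the non-defectivity input now comes from Theorem \ref{main} (which, through Remark \ref{reduction} and Theorem \ref{Bound_Flags}, covers both $n\geq 2k_r+1$ and $n<2k_r+1$ and produces the bound with $k_l$ in place of $k_r$), and one substitutes $\dim\F(k_1,\ldots,k_r;n)=(k_1+1)(n-k_1)+\sum_{j=2}^r(n-k_j)(k_j-k_{j-1})$ into the same numerical check.

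The delicate point is not an internal computation but the verification that the hypotheses of \cite[Theorem 3]{CM19} genuinely hold. In particular one must ensure we are below the generic rank, i.e.\ that $\mathbb{S}ec_h(X)\subsetneq\p^N$, equivalently $h(\dim X+1)\leq N$; this is immediate here, since $N$ grows like $\prod_i\binom{n+1}{k_i+1}$ (resp.\ like $\dim\mathbb{P}(\Gamma_a)$) and so dominates $h_{\mathrm{bound}}(\dim X+1)$ by a wide margin, the latter being only quasi-polynomial in $n$. I would also take care that the criterion is applied to $X$ as the embedded variety itself, so that the reduction of Remark \ref{reduction} is used only to transport the non-defectivity statement and never the identifiability conclusion. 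I expect the threshold $2\dim X-1\leq h_{\mathrm{bound}}$ to be the only genuinely binding inequality, and since it is assumed in the statement the corollary follows.
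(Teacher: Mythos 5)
Your proposal is correct and follows exactly the paper's own (one-line) argument: feed the non-defectivity bounds of Corollary \ref{main_cor} and Theorem \ref{main} into \cite[Theorem 3]{CM19}, with the stated inequality playing the role of the numerical hypothesis $2\dim X-1\leq h_{\mathrm{bound}}$; your extra check that one stays below the generic rank is harmless added care, not a divergence. Note only that in matching dimensions you silently (and correctly) read the statement's $\prod_{i=1}^r(k_i+1)(n-k_i)$ as the sum $\sum_{i=1}^r(k_i+1)(n-k_i)=\dim\prod_{i=1}^r\G(k_i,n)$ and the upper limit $i$ as $r$ in the flag dimension, which are typos inherited from the paper.
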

\begin{proof}
It is enough to apply Corollary \ref{main_cor}, Theorem \ref{main} and \cite[Theorem 3]{CM19}.
\end{proof}

\section{On the chordal variety of $\F(0,k;n)$}\label{sec4}
In this section we consider particularly flag varieties parametrizing chains of type $p\in H^k\subset\mathbb{P}^n$.

\begin{Proposition}
Let us consider the flag variety $\F(0,k;n)\subset\mathbb{P}(\Gamma)\subset\p^N$, where $0< k< n$. Then, $\mathbb{S}ec_2\F(0,k;n)$ has always the expected dimension except when $k=n-1$, in this case $\F(0,n-1;n)$ is $2$-defective with $2$-defect $\delta_2(\F(0,n-1;n)) = 1$.
\end{Proposition}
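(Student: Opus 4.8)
The plan is to compute the defect $\delta_2$ via Terracini's lemma. Set $X=\F(0,k;n)$; then $\dim X=n+k(n-k)$ and $\expdim(\mathbb{S}ec_2 X)=2\dim X+1$. By Terracini's lemma \cite{Te12}, for general $p_1,p_2\in X$ one has $\dim\mathbb{S}ec_2 X=\dim\langle T_{p_1}X,T_{p_2}X\rangle=2\dim X+1-\dim(\widehat T_{p_1}X\cap\widehat T_{p_2}X)$, where $\widehat T_{p_i}X$ denotes the affine cone over the embedded tangent space; hence $\delta_2(X)=\dim(\widehat T_{p_1}X\cap\widehat T_{p_2}X)$. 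It therefore suffices to show that this intersection is trivial when $0<k<n-1$ and one-dimensional when $k=n-1$.

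First I would record the shape of $\widehat T_pX\subseteq V\otimes\bigwedge^{k+1}V$. Taking $p=(v,[W])$ with $W=\langle e_0,\dots,e_k\rangle$, $v=e_0$ and $\omega=e_0\wedge\cdots\wedge e_k$, Proposition \ref{well-behaivior} together with Proposition \ref{Osc_Product} applied to $\mathbb{P}^n\times\G(k,n)$ gives $\widehat T_pX\subseteq(V\otimes\omega)+(v\otimes N)$, where $N=\langle e_J : d(J,\omega)=1\rangle$. Consequently every $\tau\in\widehat T_pX$ can be written uniquely as $\tau=x\otimes\omega+v\otimes\eta$ with $x\in V$ and $\eta\in N$, so that the coefficient of $e_a\otimes\omega$ in $\tau$ equals the $e_a$-coordinate of $x$; in particular the whole second-factor support of $\widehat T_pX$ lies in $\{J:d(J,\omega)\le 1\}$.

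For $0<k<n-1$, by $GL_{n+1}$-homogeneity I would take $p_1,p_2$ to be coordinate flags in general relative position, so that $\omega_1,\omega_2$ are coordinate $(k+1)$-subsets with $d(\omega_1,\omega_2)=\min\{k+1,n-k\}$; this is $\ge 2$ precisely when $1\le k\le n-2$. Writing $\tau=x\otimes\omega_1+v_1\otimes\eta_1=y\otimes\omega_2+v_2\otimes\eta_2$ for $\tau$ in the intersection, I would read off the coefficient of $e_a\otimes\omega_1$: on the left it is the $e_a$-coordinate of $x$, while on the right it vanishes, since $d(\omega_1,\omega_2)\ge 2$ keeps $\omega_1$ outside the second-factor support of $\widehat T_{p_2}X$. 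Hence $x=0$, and symmetrically $y=0$; the identity then reads $v_1\otimes\eta_1=v_2\otimes\eta_2$, and as $v_1,v_2$ are linearly independent for general flags this forces $\eta_1=\eta_2=0$. Thus the tangent spaces are skew, $\delta_2=0$, and $\mathbb{S}ec_2 X$ has the expected dimension.

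When $k=n-1$ the distance drops to $d(\omega_1,\omega_2)=1$ and the argument breaks, signalling the defectivity. Here I would pass to the matrix model: through $\bigwedge^n V\cong V^*$ the space $\Gamma$ is the adjoint module $\mathfrak{sl}_{n+1}$, and $\F(0,n-1;n)$ is the variety of rank-one traceless matrices $[v\otimes\varphi]$ with $\varphi(v)=0$. Then $\mathbb{S}ec_2 X$ is the closure of sums of two such matrices; verifying that a general traceless rank-two matrix is a sum of two rank-one nilpotents identifies $\mathbb{S}ec_2 X$ with the hyperplane section $\{\operatorname{tr}=0\}$ of the determinantal variety of matrices of rank $\le 2$. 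As this determinantal variety has dimension $4n-1$ and the trace hyperplane is transverse to it, $\dim\mathbb{S}ec_2 X=4n-2=\expdim(\mathbb{S}ec_2 X)-1$, whence $\delta_2=1$. I expect the main obstacle to be the bookkeeping of the second-factor supports in the tangent-space description (ensuring the $\omega_1$-slice really isolates $x$ and that the distance computation is exactly right), together with the verification in the last step that $\mathbb{S}ec_2 X$ fills the entire traceless rank $\le 2$ locus rather than a proper subvariety.
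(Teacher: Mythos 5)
Your proposal is correct, and it splits naturally into two halves relative to the paper. For $0<k<n-1$ you are essentially reproducing the paper's own argument: the paper also normalizes to coordinate flags ($p=e_{0,\{0,\ldots,k\}}$, $q=e_{n,\{n-k,\ldots,n\}}$), describes the tangent spaces via Propositions \ref{Osc_Product} and \ref{well-behaivior} as coordinate spans over the distance-$\leq 1$ balls, checks that the two supports are disjoint precisely when $d(I_0,I_1)\geq 2$, i.e.\ $k<n-1$, and concludes by Terracini; your extraction of the coefficient of $e_a\otimes\omega_1$ is just a hands-on way of saying those coordinate supports are disjoint. In the defective case $k=n-1$ you genuinely diverge. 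The paper stays local: it uses the equation $\sum_{i=0}^n(-1)^iZ_{i,I_n\setminus\{i\}}=0$ of $\F(0,n-1;n)$ as a hypersurface in $\p^n\times\p^{n*}$ to compute both tangent spaces inside $\mathbb{P}(\Gamma)$, shows they meet exactly in the point $e_{0,\{1,\ldots,n\}}+(-1)^{n+1}e_{n,\{0,\ldots,n-1\}}$, and reads off from Terracini that the span has dimension $2\dim\F(0,n-1;n)$, hence defect exactly $1$. You instead compute $\dim\mathbb{S}ec_2$ globally through $\bigwedge^nV\cong V^*$, identifying the chordal variety with the trace-zero section of the rank-$\leq 2$ determinantal variety in $\mathbb{P}(\mathfrak{sl}_{n+1})$. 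The step you flag as pending does hold, in fact with no genericity needed: writing a rank-two traceless $A=v_1\otimes\varphi_1+v_2\otimes\varphi_2$, the $2\times 2$ matrix $B=\left(\varphi_i(v_j)\right)$ is traceless, hence similar to a matrix with zero diagonal (companion form in the regular semisimple case, immediate in the nilpotent case), so every traceless matrix of rank exactly $2$ is a sum of two rank-one nilpotents and $\mathbb{S}ec_2\F(0,n-1;n)$ fills the whole locus $\{\operatorname{rank}\leq 2,\ \operatorname{tr}=0\}$. You also do not need transversality: the rank-$\leq2$ variety is irreducible of projective dimension $4n-1$ and not contained in the trace hyperplane, so every component of the section has dimension exactly $4n-2$, which gives the upper bound, while the decomposability just proved gives the lower bound; hence $\delta_2=1$. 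As for what each route buys: the paper's computation is uniform with its osculating machinery and pins down the tangent-space intersection precisely, whereas yours yields a stronger byproduct --- set-theoretic equations for the chordal variety (the $3\times3$ minors together with the trace) --- and a structural explanation of the defect, namely that $\mathbb{S}ec_2\F(0,n-1;n)$ is trapped in a hyperplane section of the secant variety of the ambient Segre $\p^n\times\p^{n*}$.
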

\begin{proof}
Let $p,q\in\F(0,k;n)$ be two general points, without lose the generality we can assume that $p=e_{0,\{0,\ldots,k\}}=e_{0,I_0}$ and $q=e_{n,\{n-k,\ldots,n\}}=e_{n,I_1}$.

Now, Proposition \ref{well-behaivior} yields that
$$\begin{array}{ccl}
T_{e_{0,I_0}}\F(0,k;n)&=&\langle e_{i,I}\:|\:d((i,I),(0,I_0))\leq 1\rangle\cap \mathbb{P}(\Gamma)\\
\end{array}$$
and
$$\begin{array}{ccl}
T_{e_{n,I_1}}\F(0,k;n)&=&\langle e_{i,I}\:|\:d((i,I),(n,I_n))\leq 1\rangle\cap \mathbb{P}(\Gamma)\\
\end{array}$$
Note that $d((i,I),(0,I_0))= 1$ if and only if either $i\neq 0$ and $I=I_0$ or $i=0$ and $|I\cap I_0|=k$. Similarly, $d((i,I),(n,I_1))=1$ if and only if either $i\neq n$ and $I=I_1$ or $i=n$ and $|I\cap I_1|=k$. Therefore, since $n\neq 0$ and $I_1\neq I_0$ we have that $e_{i,I}\in\{ e_{i,I}\:|\:d((i,I),(0,I_0))\leq 1\}\cap\{ e_{i,I}\:|\:d((i,I),(n,I_1))\leq 1\}$ if and only if either $I=I_0$ and $i=n$ or $I=I_1$ and $i=0$.

Now, assume that $I=I_0$ and $i=n$, this is $e_{i,I}\in T_{e_{0,I_0}}\F(0,k;n)\cap T_{e_{n,I_1}}\F(0,k;n)$, in particular we have $|I\cap I_1|=|I_0\cap I_1|=k$ and hence $\{1,\ldots,k\}\subset I_1$ once $0\notin I_1$. So we must have $k=n-1$. Similarly, if $I=I_1\text{ and }i=0$ we conclude that $k=n-1$.

Therefore, if $k<n-1$, we get 
$$\{ e_{i,I}\:|\:d((i,I),(0,I_0))\leq 1\}\cap\{ e_{i,I}\:|\:d((i,I),(n,I_1))\leq 1\}=\emptyset$$

and hence 
$$\{ e_{i,I}\:|\:d((i,I),(0,I_0))\leq 1\}\cap\{ e_{i,I}\:|\:d((i,I),(n,I_1))\leq 1\}\cap\mathbb{P}(\Gamma)=\emptyset$$
which implies that 
$$\dim\left\langle T_{e_{0,I_0}}\F(0,k;n),T_{e_{n,I_1}}\F(0,k;n)\right\rangle=2\dim\F(0,k;n)+1$$
So, Terracini's lemma \cite{Te12} yields that $\mathbb{S}ec_2\F(0,k;n)$ has the expected dimension whenever $k<n-1$.

Now, assume that $k=n-1$. In this case we have 
$$\{ e_{i,I}\:|\:d((i,I),(0,I_0))\leq 1\}\cap\{ e_{i,I}\:|\:d((i,I),(n,I_1))\leq 1\}=\{e_{0,\{1,\ldots,n\}},e_{n,\{0,\ldots,n-1\}}\}$$

Furthermore, $\F(0,n-1;n)$ is the hypersurface cutting out in $\p^n\times \p^{n*}$ by 
$$\displaystyle\sum_{i=0}^n(-1)^iZ_{i,I_n\setminus\{i\}}=0$$ 
where $I_n=\{0,\ldots,n\}$.

Therefore, we get that $T_{e_{0,I_0}}\F(0,n-1;n)=\langle e_{i,I}\:|\:d((i,I),(0,I_0))\leq 1\rangle\cap \mathbb{P}(\Gamma)$ is given by
$$\left\langle e_{0,\{1,\ldots,n\}}+(-1)^{n+1}e_{n,\{0,\ldots,n-1\}}\:;\:e_{i,I}\:|\:d((i,I),(0,I_0))\leq 1\text{ and }i,I\neq\left\{\begin{array}{l}
0,\{1,\ldots,n\}\\
n,\{1,\ldots,n-1\}
\end{array}\right.\right\rangle$$
and $T_{e_{n,I_1}}\F(0,n-1;n)=\langle e_{i,I}\:|\:d((i,I),(n,I_1))\leq 1\rangle\cap \mathbb{P}(\Gamma)$ is given by
$$\left\langle e_{0,\{1,\ldots,n\}}+(-1)^{n+1}e_{n,\{0,\ldots,n-1\}}\:;\:e_{i,I}\:|\:d((i,I),(n,I_1))\leq 1\text{ and }i,I\neq\left\{\begin{array}{l}
0,\{1,\ldots,n\}\\
n,\{1,\ldots,n-1\}
\end{array}\right.\right\rangle$$
Therefore, 
$$\dim\left\langle T_{e_{0,I_0}}\F(0,n-1;n),T_{e_{n,I_1}}\F(0,n-1;n)\right\rangle=2\dim\F(0,n-1;n)<\expd\mathbb{S}ec_2\F(0,n-1;n)$$
Finally, since $\expd\mathbb{S}ec_2\F(0,k;n)=2\dim\F(0,n-1;n)+1$ we have that $\F(0,n-1,n)$ is $2$-defective with 2-defect $\delta_2(\F(0,n-1;n)) = 1$.
\end{proof}

\bibliographystyle{amsalpha}
\bibliography{Biblio}
\end{document}